\numberwithin{equation}{section}
\renewcommand{\emph}[1]{\textsf{\textit{#1}}}
\let\oldtocsection=\tocsection
\let\oldtocsubsection=\tocsubsection
\renewcommand{\tocsection}[2]{\hspace{0em}\oldtocsection{#1}{#2}}
\renewcommand{\tocsubsection}[2]{\hspace{2em}\oldtocsubsection{#1}{#2}}
\begin{document}
\fontdimen8\textfont3=0.5pt  


\def \Ai {{\rm Ai}}
\def \Pf {{\rm Pf}}
\def \sgn {{\rm sgn}}
\newcommand{\EE}{\ensuremath{\mathbb{E}}}
\newcommand{\Var}{\mathrm{Var}}
\newcommand{\qq}[1]{(q;q)_{#1}}
\newcommand{\PP}{\ensuremath{\mathbb{P}}}
\newcommand{\R}{\ensuremath{\mathbb{R}}}
\newcommand{\C}{\ensuremath{\mathbb{C}}}
\newcommand{\Z}{\ensuremath{\mathbb{Z}}}
\newcommand{\N}{\ensuremath{\mathbb{N}}}
\newcommand{\Q}{\ensuremath{\mathbb{Q}}}
\newcommand{\T}{\ensuremath{\mathbb{T}}}
\newcommand{\Y}{\ensuremath{\mathbb{Y}}}
\newcommand{\I}{\ensuremath{\mathbf{i}}}
\newcommand{\Real}{\ensuremath{\mathfrak{Re}}}
\newcommand{\Imag}{\ensuremath{\mathfrak{Im}}}
\newcommand{\subs}{\ensuremath{\mathbf{Subs}}}
\newcommand{\Sym}{\ensuremath{\mathsf{Sym}}}
\newcommand{\dist}{\textrm{dist}}
\newcommand{\Res}[1]{\underset{{#1}}{\mathbf{Res}}}
\newcommand{\Resfrac}[1]{\mathbf{Res}_{{#1}}}
\newcommand{\Sub}[1]{\underset{{#1}}{\mathbf{Sub}}}
\newcommand{\la}{\lambda}
\newcommand{\ta}{\theta}
\newcommand{\labold}{\boldsymbol{\uplambda}}
\def\note#1{\textup{\textsf{\color{blue}(#1)}}}
\def\noteg#1{\textup{\textsf{\color{magenta}(#1)}}}
\newcommand{\oldrho}{\rho}
\renewcommand{\rho}{\varrho}
\newcommand{\e}{\epsilon}
\newcommand{\eps}{\varepsilon}
\renewcommand{\le}{\leqslant}
\renewcommand{\leq}{\leqslant}
\renewcommand{\ge}{\geqslant}
\renewcommand{\geq}{\geqslant}
\newcommand{\ZZ}{\mathcal{Z}}

\newcommand{\wweight}{\varpi}
\newcommand{\omegaweight}{\omega}

\newcommand{\HH}{\mathcal{H}}
\newcommand{\PC}{\mathcal{P}}
\newcommand{\Cfive}{\mathcal{C}}
\newcommand{\Dfive}{\mathcal{D}}
\newcommand{\var}{\mathrm{var}}
\newcommand\tinyvert{\vcenter{\hbox{\scalebox{0.5}{$\boldsymbol{|}$}}}}
\newcommand\tinydiagup{\vcenter{\hbox{\scalebox{0.5}{$\boldsymbol{\diagup}$}}}}
\newcommand\nint{\mathop{\mathrlap{\,\, n}\int}}
\newcommand\murand{\hat{\mu}^{(n)}}
\newcommand{\Zpolymer}{z}
\newcommand{\zd}{\Zpolymer}
\newcommand{\hd}{h}
\newcommand{\X}{\mathcal{X}}
\newcommand{\SSS}{\mathcal{S}}
\newcommand{\Gammainv}{\mathrm{Gamma}^{-1}}


\newcommand{\diag}{\alpha_{\circ}}


\usetikzlibrary{patterns}
\usetikzlibrary{arrows}

\newcommand{\varnint}{\raisebox{-0.4cm}{\begin{tikzpicture}[scale=0.43]
			\draw[thick] (-0.4,-1) -- (0,-1) -- (0,1) -- (0.4,1);
			\draw (-0.3, -0.7) node{\footnotesize $n$}; 
	\end{tikzpicture}}\hspace{-0.1cm}}


\newtheorem{theorem}{Theorem}[section]
\newtheorem{conjecture}[theorem]{Conjecture}
\newtheorem{lemma}[theorem]{Lemma}
\newtheorem{proposition}[theorem]{Proposition}
\newtheorem{corollary}[theorem]{Corollary}

\newtheorem{theoremintro}{Theorem}
\renewcommand*{\thetheoremintro}{\Alph{theoremintro}}

\theoremstyle{definition}
\newtheorem{remark}[theorem]{Remark}

\theoremstyle{definition}
\newtheorem{example}[theorem]{Example}

\theoremstyle{definition}
\newtheorem{definition}[theorem]{Definition}

\theoremstyle{definition}
\newtheorem{definitions}[theorem]{Definitions}


\title{\large Stationary measures for the log-gamma polymer and KPZ equation in half-space}

\author[G. Barraquand]{Guillaume Barraquand}
\address{G. Barraquand,
Laboratoire de Physique de l’\'Ecole normale supérieure, ENS, Université PSL, CNRS, Sorbonne Université, Université Paris Cité, F-75005 Paris, France}
\email{guillaume.barraquand@ens.fr}
\author[I. Corwin]{Ivan Corwin}
\address{I. Corwin, Department of Mathematics, Columbia University, New York, NY 10027, USA}
\email{ivan.corwin@gmail.com}

\begin{abstract}	
We construct explicit one-parameter families of stationary measures for the Kardar-Parisi-Zhang equation in half-space with Neumann boundary conditions at the origin, as well as for the log-gamma polymer model in a half-space. The stationary measures are stochastic processes that depend on the boundary condition as well as a parameter related to the drift at infinity. They are expressed in terms of  exponential functionals of Brownian motions and gamma random walks. We conjecture that these constitute all extremal stationary measures for these models.
The log-gamma polymer result is proved through a symmetry argument related to half-space Whittaker processes which we expect may be applicable to other integrable models. The KPZ result comes as an intermediate disorder limit of the log-gamma polymer result and confirms the conjectural description of these stationary measures from \cite{barraquand2021steady}. To prove the intermediate disorder limit, we provide a general half-space polymer convergence framework that extends works of  \cite{wu2018intermediate,parekh2019positive,alberts2014intermediate}.
\end{abstract}

\maketitle

%


\section{Introduction}
\label{sec:introduction}
In a recent cycle of work  \cite{corwin2021stationary, bryc2021markov, barraquand2021steady}, the stationary measure for the open KPZ equation was determined explicitly. Remarkably, this stationary measure takes the form of an exponentially reweighted Brownian motion subject to a linear potential on the starting and ending heights -- a measure that was first studied in \cite{hariya2004limiting} and which is quite reminiscent of the two-level semi-discrete polymer model \cite{o2002representation, oconnell2001brownian}. The simple and elegant form of this stationary measure came as a surprise, especially since it was derived from a considerably more complicated set of formulas for ASEP. This raises the question of understanding the origin of this form of the stationary measure from a structural perspective.

It is with this in mind that we set out to study the stationary measure for the log-gamma polymer in a half-space. We work here in the half-space geometry, as opposed to a strip (as would be necessary to reach open KPZ through a limit), for two reasons. The first is that in the half-space, there is already work \cite{o2014geometric, barraquand2018half} elucidating some algebraic structure related to the log-gamma polymer, namely half-space Whittaker processes. This structure will prove key to our construction and description of the half-space log-gamma polymer stationary measures. The second is that we wish to prove the formulas conjectured in \cite{barraquand2021steady} for the half-space KPZ equation stationary measures. While, with considerable work, we expect that the approach used in \cite{barraquand2021steady} could be made rigorous, the approach we provide here is quite simple and of a different nature -- relying on half-space Whittaker processes instead of matrix product ansatz.

\subsection{Half-space KPZ equation}
\label{sec:introKPZ}
The half-space KPZ equation models stochastic interface growth in contact with a boundary \cite{kardar1986dynamic,Ito2018}). The equation, subject to Neumann boundary condition with parameter $u\in \R$ (KPZ$_u$ for short) is
\begin{align}
	\label{eq:KPZ}\tag{KPZ$_u$}
	\begin{split}
		\partial_T \HH_u(T,X)&=\frac{1}{2} \partial^2_{X}\HH_u(T,X) +\frac{1}{2} \left( \partial_X \HH_u(T,X)\right)^2 + \xi(T,X),\\ \partial_X \HH_u(T,X)\big\vert_{X=0} &= u,
	\end{split}
\end{align}
with space-time white noise $\xi$. It is defined for $T,X\in \R_{\geq 0}$ by the Hopf-Cole transform\footnote{References \cite{corwin2016open, parekh2017kpz} adopt the convention that a solution to \eqref{eq:KPZ} is the logarithm of a solution to \eqref{eq:SHEintro} with the arbitrary convention that $\mu=u$. Following \cite{corwin2021stationary}, we adopt the different convention that $\mu=u-1/2$. This new convention ensures that the boundary condition in \eqref{eq:KPZ} is satisfied by the expectation of $\HH(T,X)$. It also makes the phase diagram in Fig. \ref{fig:phasediagram} more symmetric.} $\HH_u(T,X):= \log \ZZ_{u-\frac{1}{2}}(T,X)$, where $\ZZ_{\mu}$ solves the half-line stochastic heat equation (SHE) with Robin boundary condition $\mu\in \R$,
\begin{align}
	\label{eq:SHEintro}
	\tag{SHE$_{\mu}$}
	\begin{split}
		\partial_T \ZZ_{\mu}(T,X) &= \frac{1}{2} \partial^2_{X}\ZZ_{\mu}(T,X) + \ZZ_{\mu}(T,X) \xi(T,X), \\ \partial_X\ZZ_{\mu}(T,X)\big\vert_{X=0} &=\mu \ZZ_{\mu}(T,0),
	\end{split}
\end{align}
with initial data given by  $\ZZ_{\mu}(0,X)=e^{ \HH(0,X)}$ for $X\in \R_{\geq 0}$, where $\HH(0,X)$ is initial data for \eqref{eq:KPZ}.
Further discussion and a proper definition of \eqref{eq:SHEintro} is deferred to Section \ref{sec:notanddefs}.

In this article we explicitly construct stationary measures for this model. We conjecture that these constitute the entire set of extremal stationary measures and give a phase diagram for how they should arise in the long-time limit for general initial data.
While the KPZ height function $\HH_u(T,X)$ does not have a stationary (in $T$) probability measure, its increment process $X\mapsto \HH_u(T,X)-\HH_u(T,0)$ should. We will say that a random function $\HH(X)$ is stationary for \eqref{eq:KPZ} if the solution to \eqref{eq:KPZ} with initial data $\HH(0,X)=\HH(X)$ satisfies that for all $T\in \R_{>0}$, $\HH(T,X)-\HH(T,0)$ is equal to $\HH(X)$ in law, as a process in $X\in \R_{\geq 0}$. The law of such an $\HH(X)$ will be called a stationary probability measure for \eqref{eq:KPZ}.

The spatial derivative $\partial_X \HH(T,X)$ solves the stochastic Burgers equation which is a continuum version of an interacting particle system, modeling stochastic mass transport. In that context, the boundary condition imposes a reservoir for the creation and destruction of mass. If $\HH(X)$ is stationary for the KPZ equation then $\partial_X \HH(X)$ (which is a generalized function) will be stationary for the stochastic Burgers equation and should describe the long-time density profile of the system.

The exponential $e^{\HH(T,X)}$ solves the SHE, as noted above. The SHE can be interpreted as a continuum version of the parabolic Anderson model which models the evolution of a large mass of particles subject to diffusion, as well as duplication and destruction (based on the sign and intensity of the white noise) \cite{carmonaMolchanov,10.1214/13-AAP944}. The solution to \eqref{eq:SHEintro} also gives the partition function for a continuum  directed random polymer model involving Brownian motion paths moving in a disordered environment and subject to attraction/repulsion (depending on whether $\mu$ is negative or positive) at the origin. Such models have been used to study the effect of boundaries on pinning polymer paths \cite{kardar1985scaling,10.1214/105051606000000015,caravenna2008pinning,Cometsbook, denardis2020delta, barraquand2021kardar}. In this context, the polymer endpoint probability density takes the form $e^{\HH(T,X)-\HH(T,0)}/\int_0^{\infty} e^{\HH(T,X)-\HH(T,0)}dX$. Assuming a finite  denominator, the KPZ equation stationary measures should yield the stationary polymer endpoint density subject to  pinning.

\subsubsection{Stationary measures} We turn to formulate our first main result -- the description of a one parameter family of stationary measures for \eqref{eq:KPZ} for each choice of boundary parameter $u\in \R$. We parameterize these by $v\in \R$ such that $v\leqslant \min\lbrace 0,u\rbrace$, and denote the corresponding random function by $\HH_{u,v}(X)$.  We will define these processes via exponential transforms of Brownian motions and an inverse gamma random variable.

\begin{definition}
	$X\sim \Gammainv(\theta)$ is an inverse-gamma random variable with parameter $\theta>0$ if its distribution is supported on $\mathbb R_{>0}$ with density against Lebesgue
	$$\frac{1}{\Gamma(\theta)} x^{-\theta-1}e^{-1/x}.$$
	We note the following moment formulas:
	\begin{equation}\label{eq:inversegammamoments}
		\EE[X] = \frac{1}{\theta-1},\quad \var(X) = \frac{1}{(\theta-1)^2(\theta-2)},\quad \EE[X^k] = \frac{1}{(\theta-1)\cdots (\theta-k)},
	\end{equation}
	where we assume $\theta>k\in \Z_{\geq 1}$ for the corresponding moment to be finite. We also note
	\begin{equation}\label{eq:loggamma}
		\mathbb E[\log X]= -\psi(\theta), \quad \var(\log X) = \psi'(\theta),
	\end{equation}
	where $\psi$ denotes the digamma function $\psi(z)=\partial_z \log\Gamma(z)$ and $\psi'$ the trigamma function.
\end{definition}

\begin{definition}
	For $u\in \R$ and $v\leqslant \min\lbrace 0,u\rbrace$, define the process $\HH_{u,v}(X)$ as
	\begin{equation}
		\HH_{u,v}(X) :=\log\left( \frac{1}{\wweight}\int_{0}^X dS e^{B_1(S)+B_2(X)-B_2(S)}+ e^{B_2(X)}\right),
		\label{eq:defhuv}
	\end{equation}
	where $B_1$ and $B_2$ are independent standard Brownian motions with drifts $-v$ and $v$ (resp.), and $\wweight\sim\Gammainv(u-v)$, independent from $B_1$ and $B_2$. When $u=v$,  we define $\frac{1}{\wweight}=0$.
\end{definition}
\begin{remark}\label{rem:specialcases}
	Let us mention a few important properties of the process $\HH_{u,v}$:
	\begin{itemize}%
		\item For general $u,v$,  spatial increments of $\HH_{u,v}$ are not independent or translation invariant.
		\item For $u=v\leq 0$, $\HH_{u,v}=B_2$ is Brownian motion with drift $u=v$ (follows immediately).
		\item For $u=-v\geq 0$, $\HH_{u,v}$ is Brownian motion with drift $u=-v$ (see Lemma \ref{lemma:Brownianity}).
		\item The parameter $v$ controls the drift at infinity, in the sense that $\lim_{x\to\infty} \frac{\HH_{u,v}(x)}{x} =-v$ a.s.
		\item The definition of the process $\HH_{u,v}$ in \eqref{eq:defhuv} makes perfect sense for $v>0$ (provided $u\geq v$) in which case the processes $\HH_{u,v}$ and $ \HH_{u,-v}$ have the same distribution (see Lemma \ref{lemma:symmetryHuv}). Thus, we may restrict the sign of $v$. It will be more convenient to choose $v\leqslant 0$ in order to be consistent with earlier literature where the process \eqref{eq:defhuv} has appeared \cite{barraquand2021steady, bryc2021markov2}.
	\end{itemize}
\end{remark}

Our first result is the description of the  half-space KPZ equation stationary measures.

\begin{theorem}
	For $u,v\in\R$ with $v\leqslant \min\lbrace 0,u\rbrace$,  $\HH_{u,v}$ is stationary for \eqref{eq:KPZ}.
	\label{theo:invariantKPZintro}
\end{theorem}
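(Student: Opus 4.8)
The plan is to obtain Theorem~\ref{theo:invariantKPZintro} as a weak-noise (intermediate disorder) limit of the corresponding statement for the half-space log-gamma polymer, rather than by a direct analysis of \eqref{eq:SHEintro}. The first ingredient is the discrete counterpart, established later in the paper: for suitable bulk and boundary inverse-gamma parameters, the half-space log-gamma polymer admits an explicit one-parameter family of stationary measures, expressible through independent inverse-gamma weights read along a down-right boundary path, whose structure mirrors that of \eqref{eq:defhuv}. The proof of that discrete statement rests on a symmetry of half-space Whittaker processes (exchanging two ``boundary'' degrees of freedom), and I would take it as given at this stage. A continuum-only alternative would be to verify stationarity of $\HH_{u,v}$ for \eqref{eq:SHEintro} directly via It\^o calculus, but the polymer route is both cleaner and the one the paper's framework is designed for.

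Second, I would set up the intermediate disorder scaling: lattice spacing of order $1/n$, diffusive space-time scaling, bulk inverse-gamma parameter $\theta_n$ of order $\sqrt n$ with an order $1/\sqrt n$ correction encoding the KPZ nonlinearity, and a boundary-parameter correction of order $1/\sqrt n$ tuned so that the limiting Robin parameter equals $\mu = u - 1/2$. Under this scaling I would invoke, and where necessary extend to the half-space, the convergence results of \cite{wu2018intermediate,parekh2019positive,alberts2014intermediate}: the recentered log-partition function of the half-space log-gamma polymer converges as a space-time process to $\log\ZZ_{\mu}$ solving \eqref{eq:SHEintro}, and the solution map ``initial data $\mapsto$ time-$T$ solution'' of \eqref{eq:SHEintro} is continuous in the relevant (weighted) topology. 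This is precisely the general half-space polymer convergence framework advertised in the introduction.

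Third, and this is where the explicit form of $\HH_{u,v}$ enters, I would show that the discrete stationary measure, under the same scaling, converges in law (as a process in $X$) to $\HH_{u,v}$. This is a Donsker-type statement: the logarithm of a product of $\lfloor \sqrt n\, S\rfloor$ independent $\Gammainv(\theta_n)$ weights, recentered by $\psi(\theta_n)$ and rescaled, converges to Brownian motion with the drift dictated by the $1/\sqrt n$ parameter correction (using \eqref{eq:loggamma}); two such independent walks produce $B_1$ and $B_2$, the Riemann sum over the path converges to $\frac{1}{\wweight}\int_0^X dS\, e^{B_1(S)+B_2(X)-B_2(S)}$, and the single (unrescaled, order-one-parameter) boundary weight converges to $\wweight\sim\Gammainv(u-v)$, with the case $u=v$ corresponding to the boundary parameter tending to $0$ so that $1/\wweight\to 0$. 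Passing to the limit in the discrete stationarity identity then finishes: the left side by Step three and the continuous mapping theorem, the right side by Steps two and three (continuity of the SHE solution map applied to the converging initial data), and continuity in $T$ upgrades the identity from discrete times to all $T>0$, yielding that $\HH_{u,v}$ is stationary for \eqref{eq:KPZ} with $u=\mu+\tfrac12$.

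The main obstacle I anticipate is the half-space intermediate disorder limit itself for a general, possibly attractive ($\mu<0$), boundary. Standard results are for the full line or for restricted classes of initial data, whereas here one must (i) obtain heat-kernel and moment bounds uniform up to the boundary, including for negative Robin parameter where the boundary term is singular, and (ii) handle the stationary initial data, which carries a nonvanishing random linear drift and is built from exponential functionals — hence is neither bounded nor a compact perturbation of equilibrium — so convergence and tightness must be carried out in a topology with exponential weights compatible with this growth. A secondary, more technical point is arranging the discrete-to-continuum convergence to hold jointly for the initial profile and the time-$T$ profile, so that the distributional identity survives the limit.
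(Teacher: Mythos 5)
Your proposal follows essentially the same route as the paper: take as given the discrete stationarity result for the half-space log-gamma polymer (Theorem~\ref{theo:loggammaintro}), show via a Donsker-type argument that the discrete stationary initial data converges to $\HH_{u,v}$ (the paper's Lemma~\ref{prop:limitinitialcondition}), prove a half-space intermediate-disorder convergence theorem general enough to handle Neumann/Robin boundaries and stationary initial data with linear drift (the paper's Theorem~\ref{thm:SHEsheetlimit}, applied in Theorem~\ref{prop:KPZconvergence}), and then pass to the limit in the discrete stationarity identity. You also correctly anticipate the two main technical burdens the paper resolves in Section~\ref{sec:generalconvergence}, namely uniform heat-kernel and moment bounds up to the boundary for Robin parameter of arbitrary sign, and handling initial data with exponential growth.
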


Before discussing the relation between this theorem and previous work, we record:

\begin{conjecture}[\cite{barraquand2021steady}]\label{conj:ergodictheory}
	$\{\HH_{u,v}\}_{v\leq \min\lbrace 0,u\rbrace}$ constitutes all extremal stationary measures for \eqref{eq:KPZ} and for any initial data $\HH(0,X)$ with drift $-v$ as $X\to \infty$ we have the following.
	\begin{enumerate}%
		\item For $u\geq 0$: If $v\leq 0$,  the process $\HH(T,\cdot)-\HH(T,0)$ converges in distribution as $T\to \infty$ to the process $\HH_{u,v}(\cdot)$; if $v\geq 0$,  it converges to the process $\HH_{u,0}(\cdot)$.
		\item For $u\leq 0$:  If $v\leq u$    the process $\HH(T,\cdot)-\HH(T,0)$ converges in distribution as $T\to \infty$ to the process $\HH_{u,v}$; if $v\geq u$,  it converges to $\HH_{u,u}$, that is a Brownian motion with drift $u$.
	\end{enumerate}
\end{conjecture}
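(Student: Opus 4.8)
\emph{Proof idea.} Since the conjecture is an ergodicity/attractiveness statement rather than an explicit construction, I would not attack \eqref{eq:KPZ} directly but instead prove the analogous classification and convergence for the log-gamma polymer on the half-quadrant, where coupling and monotonicity tools are available, and then transfer the result to \eqref{eq:KPZ} through the intermediate disorder limit provided by the paper's convergence framework. On the polymer side the plan has three stages: (a) rephrase ``stationary measure'' as ``invariant measure of a spatial Markov chain''; (b) classify those invariant measures slope by slope; (c) prove attractiveness via a monotone coupling.

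For (a): the free-energy increments along a horizontal line of the half-space log-gamma polymer form a Markov chain in the bulk (``time'') direction whose one-step transition is geometric-RSK / Whittaker dynamics with inverse-gamma bulk weights and a boundary weight encoding $u$; a law on increment processes is stationary for the polymer precisely when it is invariant for this one-step map, and the discrete analogues of the processes $\HH_{u,v}$ arise as a half-space Whittaker measure Doob-transformed by the exponential of a linear functional of slope $-v$. The Burke-type identity behind Theorem \ref{theo:invariantKPZintro} is exactly this invariance.

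For (b): by the law of large numbers $\HH_{u,v}(x)/x\to -v$ (Remark \ref{rem:specialcases}), the slope is an ergodic-decomposable invariant, so it suffices to show that for each admissible slope there is a unique extremal invariant measure. I would try to establish this by a uniqueness-via-coupling argument: run the polymer from two extremal invariant laws with the same slope through common bulk and boundary weights, and show the two increment processes coalesce --- a positive-temperature analogue of uniqueness of semi-infinite polymer measures (a ``one force--one solution'' principle). Monotonicity of partition functions in the weights supplies a partial order that should force coalescence once the boundary is controlled: in the unbound phase ($u\leq 0$ with $v\geq u$, or $u\geq 0$ with $v\geq 0$) the polymer endpoint escapes to infinity and the boundary weight washes out, leaving a single slope-$v$ invariant measure; in the bound phase the measure localizes near the origin and the parameter $u$ is read off there. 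The constraint $v\leqslant\min\{0,u\}$ in \eqref{eq:defhuv} is precisely the requirement that the tilting functional be normalizable against the half-space Whittaker measure, which explains why slopes outside this range must collapse onto the boundary cases $\HH_{u,0}$ and $\HH_{u,u}$.

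For (c): given arbitrary initial data $\HH(0,\cdot)$ with slope $-v$, sandwich $e^{\HH(0,\cdot)}$ between two space-translates of the stationary initial data (possible since the partition function is monotone in the initial data and the stationary process has the same asymptotic slope), couple all three evolutions through the same noise, and combine the coalescence from (b) with a concentration estimate on the free energy to close the sandwich, yielding $\HH(T,\cdot)-\HH(T,0)\to\HH_{u,v}$; when the slope lies outside the normalizable range one lands on $\HH_{u,0}$ or the drift-$u$ Brownian motion $\HH_{u,u}$ as dictated by the phase. Passing to the continuum via the half-space polymer convergence framework then gives the statement for \eqref{eq:KPZ}. The main obstacle is stage (b)--(c): ruling out exotic stationary measures and proving that the coupling genuinely coalesces is, already on the full line, a hard ``one force--one solution'' problem, and the half-space geometry adds the need to control the boundary interaction uniformly across both phases --- which is presumably why this statement is still only a conjecture.
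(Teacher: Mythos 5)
This statement is Conjecture \ref{conj:ergodictheory}: the paper explicitly does not prove it (it even remarks that additional hypotheses beyond the drift condition may be needed, and that it is unclear whether Liggett's finite-approximation strategy for half-line ASEP can be adapted here). So there is no proof in the paper to compare against, and what you have written is a research programme rather than a proof. Your overall architecture --- classify and prove attractiveness for the discrete half-space log-gamma model first (the paper states the discrete analogue as Conjecture \ref{conj:ergodictheoryloggamma}), then transfer to \eqref{eq:KPZ} via the intermediate disorder framework --- is consistent with the paper's philosophy for Theorem \ref{theo:invariantKPZintro}, and your heuristics for the phase diagram (normalizability of the tilting, bound versus unbound phases) match the discussion around Figure \ref{fig:phasediagram}.

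The genuine gaps are the ones you partly flag, but they are worth naming precisely because each is a substantial open problem, not a technical detail. First, stage (b): the monotonicity \eqref{eq:mono} orders partition functions, not increment processes, and extremality-plus-slope does not by itself yield coalescence; the known uniqueness results for the full-space log-gamma polymer (\cite{georgiou2013ratios, janjigian2020uniqueness}) rest on Busemann-function machinery and ergodicity in \emph{both} lattice directions, which is destroyed by the boundary --- in the bound phase the increment process $\zd_{u,v}(k+1)/\zd_{u,v}(k)$ is not even translation invariant (Remark \ref{rem:discreteprops}), so the ergodic-decomposition-by-slope argument does not directly apply near the origin. Second, stage (c): sandwiching by translates of the stationary data controls the height, but closing the sandwich requires quantitative coalescence estimates that are exactly the hard content of the conjecture. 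Third, and this one you gloss over entirely: even granting the discrete ergodic theorem, Theorem \ref{thm:SHEsheetlimit} gives convergence to the SHE at \emph{fixed} macroscopic time $T$ as $n\to\infty$; deducing the $T\to\infty$ behavior of \eqref{eq:KPZ} from the $m\to\infty$ behavior of the polymer requires interchanging two limits, which demands uniform-in-$T$ control that the paper's framework does not provide. None of these steps is carried out, so the proposal should be read as a plausible roadmap for why the conjecture is believed, not as a proof.
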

Some additional conditions besides having drift $-v$ may be needed in this conjecture. The phase diagram that is conjectured is illustrated in Figure \ref{fig:phasediagram}.
In the limit where the drift $-v$ goes to $-\infty$, the sequence of initial datum $\HH(0,X)=-vX+\log(v)$ approximates of narrow wedge initial data where $e^{\HH(0,X)}=\delta(X)$. In this case the conjecture suggests that $\HH(T,\cdot)$ weakly converges  as $T\to \infty$ to a Brownian motion with drift $u$ when $u\leq 0$ and to $\HH_{u,0}$ when $u\geq 0$. Similarly, for flat initial data $\HH(0,\cdot)\equiv  0$ the process $\HH(T,\cdot)$ should weakly converge  as $T\to\infty$ to a Brownian motion with drift $u$ when $u\leq 0$ and to $\HH_{u,0}$ when $u\geq 0$.

\begin{figure}
	\begin{tikzpicture}[scale=0.65]
		\fill[blue, opacity=0.1] (0,0) -- (4,0) -- (4,4) -- (0,4) -- cycle;
		\fill[red, opacity=0.1] (0,0) -- (4,0) -- (4,-3.5) -- (-3.5,-3.5) -- cycle;
		\fill[green, opacity=0.1] (0,0) -- (0,4) -- (-3.5,4) -- (-3.5,-3.5) -- cycle;
		\draw[ultra thick, -stealth] (0,0) -- (4,0) node[anchor=north]{$u$};
		\draw[ultra thick, -stealth] (0,0) -- (0,4) node[anchor=east]{$v$};
		\draw[ultra thick] (0,0) -- (-3.5,-3.5);
		\draw[thick, dashed, gray] (-3.5,3.5) -- (3.5,-3.5);
		\draw[dashed, gray] (0,0) -- (-3,0) node[anchor=east]{$0$};
		\draw[dashed, gray] (0,0) -- (0,-3) node[anchor=north]{$0$};
		\draw (2,2) node{$\HH_{u,0}(X)$};
		\draw (1.1,-2) node{$\HH_{u,v}(X)$};
		\draw (-2,0.5) node{$B(X)+uX $};
	\end{tikzpicture}
	\caption{Fix $u,v\in \mathbb R$. Conjecture \ref{conj:ergodictheory} claims that the solution to \eqref{eq:KPZ} converges at large time to one of the spatial process $\HH_{u,v}$, $\HH_{u,0}$ or a Brownian motion $B(X)+uX$, according to where $(u,v)$ lies in this diagram (see also \cite[Fig. 2]{barraquand2021steady}). The same diagram also describes the large $L$ limits of the measures $\mathcal P^L_{u,v}$ for different values of $(u,v)$, obtained in \cite{hariya2004limiting}. Along the antidiagonal line $u+v=0$, $\lim_{L\to\infty} \mathcal P_{u,v}^L$ is always Brownian motion with drift $u=-v$.}
	\label{fig:phasediagram}
\end{figure}
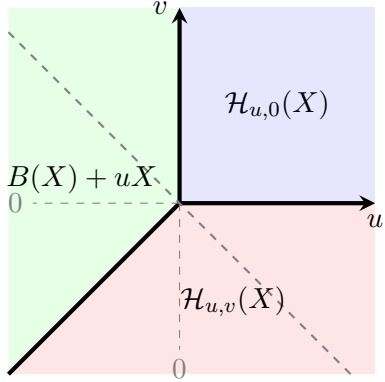

\subsubsection{Relation to previous work}
For the full-line KPZ equation ($X\in \R$) Brownian motion with arbitrary drift is stationary. This was known in the physics literature since \cite{forster1977large} and proved in \cite[Proposition B.2]{bertini1997stochastic} based on the convergence of the height function of the  weakly asymmetric simple exclusion process (WASEP) to the KPZ equation together with the fact that Bernoulli product measures are invariant under ASEP dynamics (see \cite{funaki2015kpz} for a different proof).
For the KPZ equation with periodic boundary conditions, the Brownian bridge measure is the unique stationary measure, see for instance \cite{hairer2018strong,GP18}.

Boundaries in the half-space KPZ equation or open KPZ equation on  $[0,L]$ generically break the Gaussian nature of the stationary measure. The (conjecturally unique) stationary measure for the open KPZ equation on $[0,1]$ with Neumann boundary conditions $\partial_X \HH(T,X)\vert_{X=0} =u$ and $\partial_X \HH(T,X)\vert_{X=1} =-v$ was recently constructed in \cite{corwin2021stationary} and characterized when $u+v\geq 0$ therein through Laplace transform formulas. That work built on descriptions of the invariant measures for open ASEP on a segment \cite{derrida1993exact, uchiyama2004asymmetric, bryc2017asymmetric} and the convergence of WASEP on a segment to the open KPZ equation \cite{corwin2016open,parekh2017kpz} (see also the review \cite{Corwin2022survey}). The Laplace transform formula in \cite{corwin2021stationary} was then inverted in \cite{bryc2021markov} and the physics article \cite{barraquand2021steady} (see also \cite{bryc2021markov2}), yielding the following simple description for the stationary measure.

For $L\in \R_{>0}$ and $u,v\in \mathbb R$, define a probability measure $\mathcal P^L_{u,v}$ on $C([0,L])$,  describing the law of the process $ X\mapsto W(X)+Y(X)$,
where $W$ is a standard Brownian motion on $[0,L]$ with variance $1/2$, and $Y$ is a process, independent from $W$, whose distribution $\mathbb P_Y$ is absolutely continuous with respect to the Brownian measure with variance $1/2$, denoted $\mathbb P_B$, with Radon-Nikodym derivative
\begin{equation}
	\frac{d\mathbb P_{Y}}{d\mathbb P_B}(B) = \frac{1}{\mathcal Z_{u,v}} e^{-2vB(L)}\left( \int_0^L e^{-2B(s)}ds\right)^{-u-v},
	\label{eq:RND}
\end{equation}
where $\mathcal Z_{u,v}$ is a normalization constant (computed exactly in \cite[Prop. 4.1]{donati2000striking} for $u,v>0$). 
The process $Y$ was first studied in \cite{hariya2004limiting}, motivated by the Matsumoto-Yor   identity \cite{matsumoto2001relationship, matsumoto2005exponential}  which involves similar reweighting of the Brownian measure by exponential functionals.
When $u+v>0$, the stationary process introduced in \cite{corwin2021stationary} is exactly distributed according to the probability measure $\mathcal P^1_{u,v}$. It was conjectured in \cite{barraquand2021steady} that $\mathcal P^L_{u,v}$ is a stationary measure for the KPZ equation on $[0,L]$ for any $L$, and any $u,v\in \mathbb R$.

The description of the large $L$ limit of $\mathcal P_{u,v}^L$ is one of the main results of \cite{hariya2004limiting}. A phase transition phenomenon emerges and the limit depends on whether $(u,v)$ belongs to three regions indicated in the phase diagram in  Figure \ref{fig:phasediagram}. Specifically, \cite{hariya2004limiting} proved that as $L\to \infty$,
\begin{enumerate}%
	\item For $u\geq 0,v\geq 0$ (Figure \ref{fig:phasediagram} blue region), $\mathcal P_{u,v}^L$ weakly converges to the distribution of $\HH_{u,0}$;
	\item For $u\leq 0,v\geq u$ (Figure \ref{fig:phasediagram} green region), $\mathcal P_{u,v}^L$ weakly converges to the distribution of $ \HH_{u,u}$, that is a standard Brownian motion with drift $u$;
	\item For $v\leq 0, u\geq v$ (Figure \ref{fig:phasediagram} red region) , $\mathcal P_{u,v}^L$ weakly converges to the distribution of $\HH_{u,v}$.
\end{enumerate}
It was also conjectured in \cite{barraquand2021steady} that the large $L$ limits of the measures $\mathcal P^L_{u,v}$ would be stationary for \ref{eq:KPZ}. Theorem \ref{theo:invariantKPZintro} confirms this conjecture, though we proceed in a rather different manner, working from the start with a half-space model -- the log-gamma polymer. It would be interesting to see if the approach suggested in \cite{barraquand2021steady} can be made rigorous -- first extending the results of \cite{corwin2021stationary} to identify the stationary measure of the open KPZ equation on the interval $[0,L]$ and then  verifying that the limit of those stationary measures as $L\to \infty$ are also stationary for \ref{eq:KPZ}. That approach, however, could only yield  Theorem \ref{theo:invariantKPZintro} subject to the restriction $u+v\geq 0$ (getting around this in the approach used in \cite{corwin2021stationary}  seems quite difficult). One advantage of the approach we develop here is that such a restriction is not present. Indeed, conjecturally we are able to access all of the half-space KPZ stationary measures.

Prior to this recent cycle of work, it was clear that Brownian motion with drift $u$ should be stationary  for \eqref{eq:KPZ}. This (which is a special case of Theorem \ref{theo:invariantKPZintro}) can be shown in the spirit of \cite[Proposition B.2]{bertini1997stochastic} by combining a special case (known from \cite{liggett1975ergodic}) of the half-space ASEP invariant measure which is of product form with the convergence result of \cite{corwin2016open}. Another approach to this Brownian case was described in  \cite{barraquand2020half} and relies on identifying a special half-space log-gamma polymer stationary measure in which the free energy increments are also of product form. This product measure is a special case of our one-parameter family of the log-gamma polymer stationary measures that we identify in Section \ref{sec:loggammaintro}.

Returning to $\HH_{u,v}$,  setting
$ B_1 = \beta_1 - \beta_2$ and $B_2=\beta_1+\beta_2$ (so $\beta_1$ and $\beta_2$ are independent Brownian motions with diffusion coefficient $1/2$ and drifts resp. $0$ and $v$) we see that
\begin{equation}
	\HH_{u,v}(x) =\beta_1(x)+\beta_2(x) +\log\left( 1+\frac{1}{\wweight}\int_{0}^x ds e^{- 2 \beta_2(s)} \right).
	\label{eq:geometricPitman}
\end{equation}
This formula for $\HH_{u,v}$ as a sum of $\beta_1$ and the geometric Pitman transform of $\beta_2$ appears in \cite{barraquand2021steady} where $\HH_{u,v}$ is matched to the main object of study in  \cite{hariya2004limiting} (the equivalence between \eqref{eq:geometricPitman} and \eqref{eq:defhuv} is also used in \cite{barraquand2021half} to study the distribution of \eqref{eq:KPZ} from $\HH_{u,v}$ initial data).

Conjecture \ref{conj:ergodictheory} is a continuum analog of the half-line  ASEP ergodic theorem \cite[Theorem 1.8]{liggett1975ergodic}, in the sense that the WASEP height function converges to a solution of \eqref{eq:KPZ} \cite{corwin2016open, parekh2017kpz}, and the phase diagram in \cite{liggett1975ergodic} becomes that of Conjecture \ref{conj:ergodictheory}. More precisely,  \cite[Theorem 1.7]{liggett1975ergodic} proves the existence (and some asymptotic properties as well, though not an explicit description) of invariant measures for half-space ASEP. These measures are denoted, therein, by $\mu(\lambda, \rho)$ where $\lambda$ is a boundary parameter (the density imposed by the reservoir at the boundary) and $\rho$ corresponds to the average density of particles at infinity under the measure $\mu(\lambda, \rho)$. The ASEP ergodic theorem \cite[Theorem 1.8]{liggett1975ergodic} assumes ASEP initial data that is  product Bernoulli with an asymptotic density at infinity (though probably such a theorem holds for more general initial data provided an asymptotic density at infinity). The approach used in \cite{liggett1975ergodic} relies heavily on the fact that half-space ASEP is approximated by ASEP on an interval (i.e., open ASEP), which itself is a finite-state space Markov process. It is unclear whether this approach works in the  setting of Conjecture \ref{conj:ergodictheory}.

\subsubsection{Ideas in the proof of Theorem \ref{theo:invariantKPZintro}}
\label{sec:KPZproofsketch}
We study a discrete analogue of  \eqref{eq:KPZ}, the half-space (or sometimes called octant) log-gamma discrete directed polymer model and characterize its stationary measures in Section \ref{sec:loggammaintro}. Having established this discrete result (Theorem \ref{theo:loggammaintro}), Theorem \ref{theo:invariantKPZintro} follows by way of a scaling limit (Theorem \ref{prop:KPZconvergence}).  The convergence of half-space discrete polymer partition functions to \eqref{eq:SHEintro} has been considered before in \cite{wu2018intermediate, parekh2019positive}. Those works either assume degenerate initial data (i.e. point to point polymers) or degenerate boundary conditions (asymptotically Dirichlet). Since we needed to deal with convergence for general initial data and Neumann boundary conditions, we provided a general set of results (Theorem \ref{thm:SHEsheetlimit}) for half-space SHE limits for intermediate disorder limits of half-space polymer partition functions. These are built around convergence of a six-parameter partition function that includes varying the starting and ending times and spatial locations, as well as the boundary parameters and the inverse temperature. From this master result, convergence for appropriate initial data follows as a corollary. We use heat kernel estimates from \cite{wu2018intermediate} and the overall technique from \cite{alberts2014intermediate} (see also \cite{CSZ}).

\subsection{Half-space log-gamma polymer}
\label{sec:loggammaintro}
The log-gamma polymer model is  an exactly solvable  model of directed polymer in the quadrant $\mathbb Z_{>0}^2$, with $\Gammainv$-distributed weights introduced in \cite{seppalainen2012scaling} (see also \cite{corwin2014tropical}). In this paper, we are interested in a half-space version of the model that was first studied in \cite{o2014geometric} (see also some subsequent works \cite{barraquand2018half,bisi2019point,BOCZ,barraquand2021identity}).

\subsubsection{Partition function discrete SHE}
The partition function for this model, denoted by $\zd(n,m)$ for $n,m\in \Z_{\geq 0}$ with $n\geq m$ is defined in Section \ref{sec:loggamma} via a discrete path integral through the $\Gammainv$ weights. It is immediate that it satisfies the following discrete version of the SHE subject to imposing suitable initial data for the distribution of the process $\left( \zd(n,0) \right)_{n\geq 0}$:
\begin{align}\label{eq:discreteSHE}\tag{dSHE$_{u,\alpha}$}
	\begin{split}
		\zd(n,m) &= \wweight_{n,m} (\zd(n-1,m)+\zd(n,m-1)) \text{ for }n>m\geq 1,\\
		\zd(n,n) &= \wweight_{n,n} \zd(n,n-1) \text{ for }n\geq 1.
	\end{split}
\end{align}
Above we will assume that the weights $\wweight_{n,m}$ are all independent, and distributed as
$$\begin{cases}
	\wweight_{n,m}\sim\Gammainv(2\alpha) &\mbox{ for }n>m\geq 1,\\
	\wweight_{n,n}\sim\Gammainv(\alpha+u)&\mbox{ for }n\geq 1,
\end{cases}$$
where we have a bulk parameter $\alpha>0$ and a boundary parameter $u>-\alpha$.
The free energy $\hd(n,m):=\log \zd(n,m)$ plays a similar role here as the KPZ equation $\HH(T,X)$ did earlier. A process $\big( h(k)\big)_{k\in \Z_{\geq 0}}$ is stationary for the half-space log-gamma polymer if the solution to \eqref{eq:discreteSHE} with initial data $\hd(\cdot,0)=\hd(\cdot)$ has the property that the distribution of $\big( \hd(m+k,m)-\hd(m,m)\big)_{k\in \Z_{\geq 0}}$ in the same for all $m\in \Z_{\geq 0}$ (and equal to that of the initial data).

\subsubsection{Half-space log-gamma polymer stationary measures}
A process $\left( r(k)\right)_{k\in \Z_{\geq 0}}$ is a $\Gammainv(\theta)$ multiplicative random walk if $\big(r(k+1)/r(k)\big)_{k\in \Z_{\geq 0}}$ are i.i.d. $\Gammainv(\theta)$.

\begin{definition}
	For $\alpha\in \R_{\geq 0}$ and $u,v\in \R$ with  $u,v>-\alpha$ and $v\leq \min\lbrace 0,u\rbrace$, define
	\begin{equation}
		\zd_{u,v}(k) := r_2(k)  + \frac{1}{\wweight}\sum_{\ell=1}^k r_1(\ell) \frac{r_2(k)}{r_2(\ell-1)},\qquad \hd_{u,v}(k):=\log \zd_{u,v}(k),\quad \textrm{for $k\in \Z_{\geq 0}$,}
		\label{eq:defzuv}
	\end{equation}
	where $r_1$ and $r_2$ are independent $\Gammainv(\alpha+v)$ and $\Gammainv(\alpha-v)$ (resp.) multiplicative random walks with $r_1(0)=r_2(0)=1$, and $\wweight\sim \Gammainv(u-v)$ is independent (again, when $u=v$, we define $\frac{1}{\wweight}=0$).
\end{definition}

\begin{remark}\label{rem:discreteprops}
	Let us mention a few properties of the process $\zd_{u,v}$ (equivalently $\hd_{u,v}$):
	\begin{itemize}%
		\item For general $u,v$, $\zd_{u,v}(k+1)/\zd_{u,v}(k)$ are not independent or translation invariant.
		\item For $u=v\leq 0$, $\zd_{u,u}=r_2$ is a  $\Gammainv(\alpha-u)$ multiplicative random walk.
		\item For $u=-v\geq 0$, $\zd_{u,-u}$ is a $\Gammainv(\alpha-u)$ multiplicative random walk (Lemma \ref{prop:specialcase}).
		\item The parameter $v$  controls the behaviour at infinity in the sense that $\zd_{u,v}(k+1)/\zd_{u,v}(k)\Rightarrow \Gammainv(\alpha+v)$ when $k$ goes to infinity (see Section \ref{sec:discprevwork}).
		\item The definition of  $\zd_{u,v}$ in \eqref{eq:defzuv} makes perfect sense for $v>0$ (provided $-\alpha< v\leq u$) in which case the processes $\zd_{u,v}$ and $\zd_{u,-v}$ have the same distribution (Lemma \ref{rem:symmetryvtominusv}).
	\end{itemize}
\end{remark}

Our second result is the  description of the half-space log-gamma stationary measures.
\begin{theorem} For any $\alpha\in \R_{>0}$ and $u,v\in \R$ such that $u,v>-\alpha$ and $v\leq \min\lbrace 0,u\rbrace$, the process $\hd_{u,v}$  is stationary for the half-space log-gamma polymer, i.e., for \eqref{eq:discreteSHE}.
	\label{theo:loggammaintro}
\end{theorem}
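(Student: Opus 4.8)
The plan is to reduce the stationarity claim to a single update of the column index $m$ and then to establish that update inside the half-space Whittaker process framework. Since \eqref{eq:discreteSHE} is linear in $\zd$, rescaling the initial column $(\zd(k,0))_{k\geq 0}$ by a constant rescales every $\zd(n,m)$ by that constant and leaves all increments $\hd(n,m)-\hd(m,m)$ unchanged; moreover $\zd_{u,v}(0)=1$ by \eqref{eq:defzuv}, so I may normalize $\zd(0,0)=1$. The dynamics \eqref{eq:discreteSHE} is Markov in $m$: column $m+1$ is a fixed deterministic function (not depending on $m$) of column $m$ and of the fresh weights $\{\wweight_{n,m+1}\}_{n\geq m+1}$, which are independent of all earlier columns. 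Hence it suffices to prove the one-step statement: if $(\zd(k,0))_{k\geq 0}$ is distributed as $(\zd_{u,v}(k))_{k\geq 0}$ and $\wweight_{1,1}\sim\Gammainv(\alpha+u)$, $\wweight_{k,1}\sim\Gammainv(2\alpha)$ for $k\geq 2$ are independent of it, then the shifted updated column $\big(\zd(k+1,1)/\zd(1,1)\big)_{k\geq 0}$ is again distributed as $(\zd_{u,v}(k))_{k\geq 0}$ with the same parameters $(\alpha,u,v)$. Iterating and passing to logarithms then gives Theorem \ref{theo:loggammaintro}, the ``same for all $m$'' conclusion requiring no extra work.

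For the one-step statement I would realize $\zd_{u,v}$ within the half-space Whittaker process framework of \cite{o2014geometric,barraquand2018half}. Concretely: when column $0$ is initialized from the law $\zd_{u,v}$, one shows this is the column marginal of a half-space Whittaker measure with bulk parameters $(\alpha,\ldots,\alpha)$, boundary parameter $u$, and a drift specialization governed by $v$, so that, run under \eqref{eq:discreteSHE} --- which is the first-coordinate shadow of half-space geometric RSK --- the array $\{\zd(n,m)\}_{n\geq m\geq 0}$ forms a half-space Whittaker process, its time-$m$ slice again a half-space Whittaker measure. Stationarity then becomes the assertion that the column marginal of this measure is unchanged by the time step, and this is where the symmetry enters: the half-space Whittaker measure is invariant under exchanging its specialization parameters --- including exchanges involving the boundary parameter $u$ and the drift $\pm v$ --- which is the same symmetry underlying $\zd_{u,v}\stackrel{d}{=}\zd_{u,-v}$ (Lemma \ref{rem:symmetryvtominusv}) and the product case $u=-v$ (Lemma \ref{prop:specialcase}). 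The time step acts on the Whittaker measure as a shift of a specialization parameter that such an exchange absorbs, which yields the desired invariance.

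Equivalently --- and this makes the role of the boundary transparent --- the one-step statement can be attacked by a direct induction on $k$ tracking the coordinates $\zd(k+1,1)$. For $k\geq 2$ the bulk update $\zd(k,1)=\wweight_{k,1}(\zd(k-1,1)+\zd(k,0))$ is governed by Seppäläinen's inverse-gamma local move \cite{seppalainen2012scaling} (if $\wweight\sim\Gammainv(a+b)$, $U\sim\Gammainv(a)$, $V\sim\Gammainv(b)$ are independent, an explicit deterministic bijection turns them into independent inverse-gammas with the parameters redistributed), and applying it down the column propagates the ``multiplicative walk plus geometric-Pitman correction'' form of \eqref{eq:defzuv} one column over. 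The genuinely new --- and, I expect, hardest --- point is the boundary update $\zd(1,1)=\wweight_{1,1}\zd(1,0)$: there the parameter $u$ is consumed, the non-product structure of $\zd_{u,v}$ is destroyed and must be rebuilt, and one must show that the new boundary weight $\wweight\sim\Gammainv(u-v)$ and the new correction term $\tfrac{1}{\wweight}\sum_{\ell}r_1(\ell)\,r_2(k)/r_2(\ell-1)$ regenerate jointly and independently of the freshly formed bulk increments. Controlling this vertex cleanly is precisely what forces the use of half-space integrability: in the Whittaker route it requires the half-space (symplectic-type) analogue of parameter exchange, more delicate than its full-space counterpart; in the direct route it requires the local move paired with a Matsumoto--Yor/Dufresne-type boundary identity, together with a check of the degenerate cases $u=v$ (where $\tfrac{1}{\wweight}=0$) and $v=\min\{0,u\}$.
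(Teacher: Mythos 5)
Your proposal correctly identifies that half-space Whittaker processes and their parameter-exchange symmetry are the engine (indeed the paper's key lemma, Lemma \ref{lem:symmetry}, is exactly such a symmetry statement), but it is missing the concrete mechanism that turns that symmetry into stationarity, and the mechanism it gestures at is not the right one. The paper does \emph{not} argue that ``the time step acts on the Whittaker measure as a shift of a specialization parameter that the exchange absorbs,'' and the symmetry it uses permutes only $\alpha_1,\dots,\alpha_m$, never the boundary parameter $\alpha_\circ=u$. Instead, the paper embeds $\zd_{u,v}$ as ratios of an inhomogeneous half-space log-gamma partition function with the tuned specialization $\alpha_\circ=u$, $\alpha_1=v$, $\alpha_2=-v$, $\alpha_i=\alpha$ for $i\geq 3$ (Definition \ref{def:tworow}). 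The crucial observation is that $\alpha_1+\alpha_2=0$ forces the weight $\wweight_{2,1}\sim\Gammainv(0)$ to diverge. Lemma \ref{lem:symmetry} says the joint law of $(\zd(m,m),\dots,\zd(m+k,m))$ is invariant under permutations of $\alpha_1,\dots,\alpha_m$, so one may transport the degenerate pair to $(\alpha_{m-1},\alpha_m)=(v,-v)$; the divergent weight then sits at $(m,m-1)$ and, after taking ratios, forces every contributing path to pass through that vertex. This \emph{path-routing} factorizes the partition function and shows that row-$m$ ratios are equal in law to row-$2$ ratios for every $m\geq 2$ (Proposition \ref{prop:stationarity2}); no one-step Markov reduction is needed, and the stationarity comes out all at once.

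Without the routing trick, your one-step reduction leaves the hard part untouched: you yourself flag the boundary vertex $\zd(1,1)=\wweight_{1,1}\zd(1,0)$ as ``the hardest point,'' and neither of your two routes supplies the ingredient that handles it. The Whittaker-process route is a plausible reorganization in principle, but the assertion that the updated column is again a column marginal of the \emph{same} half-space Whittaker measure does not follow from a parameter exchange alone — one must explain why the row shift does not change the law of the ratios, and that is exactly what the degenerate-weight/path-routing argument provides. The alternative direct route (Seppäläinen's local move on bulk sites plus a Matsumoto--Yor/Dufresne identity at the boundary) is a genuinely different program; the paper only uses the local move for the one-row $u=v$ case (Proposition \ref{prop:stationarity}, from \cite{barraquand2020half}), and it is not established that the local move plus a boundary identity regenerates the full non-product law $\zd_{u,v}$ at the boundary vertex. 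In short: the framework is right, but the decisive idea — tune $\alpha_1+\alpha_2=0$ so a weight diverges, then use the permutation symmetry to route all paths through a chosen vertex — is absent, and the proposal as written does not close the gap it identifies.
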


The following is a discrete version of Conjecture \ref{conj:ergodictheory}.

\begin{conjecture}\label{conj:ergodictheoryloggamma}
	For given $\alpha\in \R_{>0}$ and $u>\-\alpha$, $\big\lbrace \zd_{u,v}\big\rbrace_{v\in (-\alpha, \min(u,0)]}$ constitutes all extremal stationary measures for \eqref{eq:discreteSHE} and for any (potentially random) initial data $\zd(k,0)$ such that $\lim_{k\to\infty} \log\zd(k,0)/k = -d\in \R$ we have the following. Let $v=\psi^{-1}(d)-\alpha$ (where $\psi^{-1}(d)$ is the unique positive root of $\psi(z)=d$).
	\begin{enumerate}%
		\item For $u\geq 0$: If $v\leq 0$, the process $\zd(\cdot,m)/\zd(m,m)$ converges weakly  as $m\to \infty$ to $\zd_{u,v}$; if $v\geq 0$, it converges to $\zd_{u,0}$.
		\item For $u\leq 0$:  If $v\leq u$  the process $\zd(\cdot,m)/\zd(m,m)$ converges weakly as $m\to \infty$ to $\zd_{u,v}$; if $v\geq u$, it converges to $\zd_{u,u}$,  a $\Gammainv(\alpha-u)$ multiplicative random walk.
	\end{enumerate}
\end{conjecture}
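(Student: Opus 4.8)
\emph{Monotone coupling and the role of the slope.} The first step would be to record that the row-to-row dynamics of \eqref{eq:discreteSHE} is \emph{attractive}. Given the row $(\zd(n,m))_{n\geq m}$ and an independent family $(\wweight_{n,m+1})_{n\geq m+1}$, solving the $n$-recursion in \eqref{eq:discreteSHE} (seeded at the diagonal) produces the next row, and the resulting Markov kernel is monotone: if $\zd(\cdot,m)\leq\tilde\zd(\cdot,m)$ pointwise then, driven by the same weights, $\zd(\cdot,m+1)\leq\tilde\zd(\cdot,m+1)$ pointwise, by an immediate induction in $n$ (all weights being positive). The same holds for the ratio processes $\zd(\cdot,m)/\zd(m,m)$. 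One would also check that the stationary family $\{\zd_{u,v'}\}$ is stochastically ordered in $v'$ — $\Gammainv(\alpha+v')$ being stochastically decreasing in $v'$, so are the walks $r_1,r_2$ and, after a short argument, the increments of $\zd_{u,v'}$ — and that $\zd_{u,v'}$ has a.s.\ log-slope $-\psi(\alpha+v')$, strictly decreasing in $v'$, by \eqref{eq:loggamma} and the law of large numbers. This is precisely the structure under which the half-line ASEP ergodic theorem \cite{liggett1975ergodic} is established, and the plan is to run the analogous programme, with the diagonal boundary playing the role of the ASEP reservoir.

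\emph{Convergence from general initial data.} Fix initial data with $\lim_k\log\zd(k,0)/k=-d$ and set $v=\psi^{-1}(d)-\alpha$. In the non-saturated case $v\leq\min\{0,u\}$: for small $\epsilon>0$ one couples, on an event of probability tending to $1$, the initial data so that outside a finite head $\{k\leq K_\epsilon\}$ it is pointwise sandwiched between samples of the laws $\zd_{u,v-\epsilon}$ and $\zd_{u,v+\epsilon}$ (whose slopes $-\psi(\alpha+v\mp\epsilon)$ straddle $-d$), and inside the head one dominates it crudely by the same two laws shifted by a large constant. Running the dynamics with common weights, monotonicity sandwiches $\zd(\cdot,m)/\zd(m,m)$ between two genuinely stationary processes of parameters $v\mp\epsilon$, up to a head correction. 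The crux is to show the head correction washes out as $m\to\infty$: the diagonal injecting fresh randomness at unit rate, the law of a window $(\zd(m+k,m)/\zd(m,m))_{0\leq k\leq L}$ should depend on the initial head only through a quantity that decays in $m$, which one would prove by a further monotone coupling of two copies of the dynamics agreeing on $\{k>K_\epsilon\}$ at time $0$ and showing they coalesce near the origin. Letting $\epsilon\to0$ and using continuity of $v'\mapsto\mathrm{Law}(\zd_{u,v'})$ then gives convergence to $\zd_{u,v}$. In the saturated cases ($v\geq0$ with $u\geq0$, or $v\geq u$ with $u\leq0$) the data lies below the critical stationary law (say $\zd_{u,0}$) for large $k$ but decays strictly faster, so the upper sandwich still yields $\leq\zd_{u,0}$ while the matching lower bound requires showing the surplus mass — sitting at a slope steeper than the boundary can sustain — escapes to infinity, so the process relaxes upward to the critical law. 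This is the polymer analogue of the rarefaction-fan convergence of \cite{liggett1975ergodic}.

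\emph{Classification of extremal stationary measures.} That the $\zd_{u,v}$ exhaust the extremal stationary measures would follow from two facts. First, under any stationary measure the log-slope $\lim_k\log\zd(k)/k$ exists and is a.s.\ constant on ergodic components — via Kingman's subadditive ergodic theorem applied to $\log\zd(k)$ (subadditive for the spatial shift) together with the ergodic decomposition. Second, for a given slope the extremal stationary measure is unique: coupling two stationary measures $\mu,\mu'$ of equal slope through a common realization of the weights and a monotone coupling of $\mu\wedge\mu'$ and $\mu\vee\mu'$, the discrepancy between the two ratio processes is, by stationarity of both marginals and attractiveness, conserved in an averaged sense, and equality of slopes forces it to vanish a.s., so $\mu=\mu'$; Theorem~\ref{theo:loggammaintro} then identifies the unique one of slope $-\psi(\alpha+v)$ with $\zd_{u,v}$. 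A more robust alternative for both this uniqueness and the identification of the limits above is to construct Busemann functions as a.s.\ limits of free-energy increments along directions of slope $-d$, verify via a half-space shape theorem and concentration (available in the regimes of \cite{o2014geometric,barraquand2018half}) that they are stationary cocycles, and match them with $\zd_{u,v}$ using the distributional identities behind Theorem~\ref{theo:loggammaintro}.

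\emph{Main obstacle.} The genuine difficulty — and the reason this is recorded as a conjecture — is the combination of uniqueness of the stationary measures with the half-space boundary. Unlike half-line ASEP, whose state space is compact and whose dynamics is a nearest-neighbour particle system where Liggett's coupling estimates close cleanly, here the increment variables are noncompact and continuous, so one must separately establish tightness of $(\zd(\cdot,m)/\zd(m,m))_m$, prove that subsequential limits are stationary, and obtain quantitative control of the rate at which the diagonal boundary decouples the window near the origin from the initial head. Even granting a slope-classification of stationary measures, pinning the limit to the \emph{explicit} process $\zd_{u,v}$ rather than merely ``some stationary measure of that slope'' needs either the discrepancy-coupling uniqueness or the Busemann construction, each of which presently appears to require half-space shape and concentration estimates beyond those currently available.
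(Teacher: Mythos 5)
The statement you are addressing is Conjecture \ref{conj:ergodictheoryloggamma}; the paper offers no proof of it, so there is nothing to compare your argument against. What you have written is a research programme, not a proof, and you say as much yourself in your final paragraph. The programme you outline (attractive monotone coupling of the row-to-row dynamics, sandwiching general initial data between stationary profiles of nearby slopes, a rarefaction-fan argument in the saturated phase, and classification of extremal stationary measures either by a discrepancy coupling or by Busemann functions) is the standard one, consistent with the paper's own remark that the conjecture is the discrete analogue of Liggett's half-line ASEP ergodic theorem and with the full-space classification of \cite{georgiou2013ratios, janjigian2020uniqueness}. But none of the load-bearing steps is established: the claim that the ``head correction washes out'' is precisely the coalescence estimate that is missing; the escape-to-infinity argument in the saturated phase and the Busemann construction both presuppose a half-space shape theorem with concentration that is not currently available; and tightness of $\zd(\cdot,m)/\zd(m,m)$ is asserted rather than proved.

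One step of your classification argument is not merely incomplete but wrong as stated. You propose to show that every stationary measure has an a.s.\ constant log-slope by applying Kingman's subadditive ergodic theorem ``for the spatial shift.'' Stationarity here is stationarity under the row-to-row dynamics ($m\mapsto m+1$), not under spatial translation in $k$, and the two are genuinely different in the half-space geometry: Remark \ref{rem:discreteprops} records that the increments of $\zd_{u,v}$ are \emph{not} translation invariant in $k$, so the explicit stationary measures themselves fail the hypothesis your ergodic-theorem step requires. (This is exactly where the half-space problem departs from the full-space one, where \cite{georgiou2013ratios} classify measures ergodic under translations in both lattice directions.) For $\zd_{u,v}$ the slope at infinity exists because the process is asymptotically a $\Gammainv(\alpha+v)$ multiplicative walk, but for an arbitrary stationary measure the existence of a spatial slope needs a separate argument that your sketch does not supply. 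Since the remaining steps (decoupling from the head, uniqueness given the slope, identification of the limit with the explicit process) all hinge on this slope classification, the proposal does not constitute a proof of the conjecture.
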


\subsubsection{Relation to previous work}\label{sec:discprevwork}
Stationary measures of the log-gamma polymer in a quadrant $\mathbb Z_{\geq 0}^2$ (i.e. full-space) have been found in \cite{seppalainen2012scaling}, where the model was first introduced. There exists a one-parameter family of stationary measures which are multiplicative random walks with inverse gamma distributed increments.  It is shown in \cite{georgiou2013ratios} (see also \cite{janjigian2020uniqueness}) that these are the only stationary measures which are  ergodic with respect to translations in both directions of the lattice (so that they correspond to spatially ergodic stationary measures).  Regarding the half-space log-gamma polymer partition function, as in \eqref{eq:discreteSHE}, the fact that the $\Gammainv(\alpha-u)$ multiplicative random walk is stationary was already known \cite{barraquand2020half} following the same method as in \cite{seppalainen2012scaling} (see also Section \ref{sec:loggammaonerow} and \cite[Section 4]{barraquand2020half}). All of the other stationary solutions we exhibit are new.
Unlike the KPZ equation, the log-gamma polymer has not been studied in geometries with periodic or two-sided boundary conditions.

The stationary measure $\zd_{u,v}$ admits a discrete version of \eqref{eq:geometricPitman}. Let
$$p(k) = \prod_{i=1}^k \xi_i\quad\textrm{for $k\in \Z_{\geq 0}$}\qquad \textrm{ and }\qquad r(k)= \zeta_1\prod_{i=2}^k \frac{\zeta_i}{\xi_{i-1}} \quad \textrm{for $k\in \Z_{\geq 1}$}.$$
where $\zeta_i\sim \Gammainv(\alpha+v)$ are i.i.d. and $\xi_i\sim \Gammainv(\alpha-v)$ are i.i.d as well.
Denote by $\mathrm{Beta}'(a,b)$ the Beta prime distribution with parameters $(a,b)$ (i.e., distribution of $G_{a}/G_{b}$ for independent variables $G_a\sim\mathrm{Gamma}(a)$ and $G_b\sim\mathrm{Gamma}(b)$). Then $r(k)$ is a $\mathrm{Beta}'(\alpha-v,\alpha+v)$ multiplicative random walks with $r(1)\sim \Gammainv(\alpha+v)$ (note that increment ratios of $p(k)$ and $r(k)$ are not independent).
For $\wweight\sim\Gammainv(u-v)$ independent from $p$ and $r$,  define
$$
a(n) := 1+\frac{1}{\wweight} \sum_{k=1}^n r(k).
$$
Then the process $\zd_{u,v}$ from \eqref{eq:defzuv} has the same distribution as
$
\zd_{u,v}(k) = p(k)a(k).
$
This can be seen as a discrete analogue of \eqref{eq:geometricPitman}. The laws of processes very  similar to $r$ and $a$ have been recently  studied in \cite{arista2021matsumoto} for certain initial conditions. In particular, for $v>0$, \cite[Proposition 6.1]{arista2021matsumoto} shows that $a(n)$ converges almost-surely as $n\to\infty$ to a random variable distributed as
$
1+\frac{G_{u-v}}{G_{2v}},
$
where $G_{u-v} \sim\mathrm{Gamma}(u-v)$ and $G_{2v} \sim\mathrm{Gamma}(2v)$ are independent \cite[Theorem 6.2]{arista2021matsumoto}. This shows, in particular, that the process $\zd_{u,v}$ behaves at infinity as a $\Gammainv(\alpha-\vert v\vert)$ multiplicative random walk as noted above.

\subsubsection{Ideas in the proof of Theorem \ref{theo:loggammaintro}}
\label{sec:loggammaproofsketch}
There are two ideas that go into this proof.
The first is a remarkable symmetry enjoyed by the half-space (and full-space) log-gamma polymer model. In Definition \ref{def:loggammahalfspace} we recall the half-space log-gamma polymer model in terms of a sum over lattice paths of products of weights $\wweight_{i,j}$ which are inhomogeneous in a manner introduced in \cite{corwin2014tropical, o2014geometric}. Namely, their distribution depends on a boundary parameter $\alpha_{\circ}$ and a family of inhomogeneity parameters $\alpha_1, \alpha_2, \dots$ so that $\wweight_{i,j}\sim \Gammainv(\alpha_i+\alpha_j)$ for $i>j$ and $\wweight_{i,i}\sim \Gammainv(\alpha_i+\alpha_{\circ})$.
We demonstrate that the distribution of $\zd(n_1,m), \dots, \zd(n_k,m)$ (the partition function for this model) is invariant under permuting  the parameters $\alpha_1, \dots, \alpha_m$ (see Lemma \ref{lem:symmetry}). This symmetry stems from the relation (as in \cite{o2014geometric}) between the partition functions and Whittaker functions, and can be seen as a limit of a similar symmetry enjoyed by half-space Macdonald processes \cite{barraquand2018half}.

The second idea involves tuning the inhomogeneities to special values so as to route all polymer paths through a given vertex. Combining this with the above symmetry leads to the desired stationarity. The inhomogeneity tuning that we use is in the spirit of the tuning used in full-space models, going back to works of \cite{BaikRains2000,ferrarispohn2006} on the PNG model and TASEP and similar with other stationary models in the KPZ class \cite{borodin2015height, aggarwal2016phase, aggarwal2016current,imamura2017free, imamura2019fluctuations, imamura2019stationary, betea2020stationary, barraquand2020half}. However, the path routing and use of symmetry to construct the stationary measures seems novel to our work.

Let us explain the idea further. We demonstrate that stationary distributions of partition function ratios arise when the parameters $\alpha_{\circ}$ and $\alpha_1,\alpha_2, \dots$ are tuned so some of the inverse gamma weights become infinite. The simplest way to produce such a divergence is to set $\alpha_{\circ}=u, \alpha_1=-u$ as this forces the weight $\wweight_{1,1}$ to diverge. By the symmetry we can swap $\alpha_1$ and $\alpha_m$ without changing the law of $\zd(n_1,m), \dots, \zd(n_k,m)$. However, now $\wweight_{m,m}$ diverges and if we consider the ratios $\zd(n,m)/\zd(m,m)$, the only paths in $\zd(n,m)$ which contribute are those that go through $(m,m)$. This shows that the law of $\zd(n,m)/\zd(m,m)$ as a process in $n$ is independent of $m$ and hence that the $\Gammainv(\alpha-u)$ multiplicative random walk  is stationary. To get to the full family $\zd_{u,v}$ requires a two-level tuning where we impose that $\alpha_1+\alpha_2=0$ so the weight $\wweight_{2,1}$ diverges. Using the parameter symmetry, the location of this diverging weight can be freely moved to any point in the lattice and used to force that the only contributing paths are those that pass through that point. In particular, we permute $(\alpha_1,\alpha_2)$ with $(\alpha_{m-1},\alpha_m)$ which forces paths through $(m,m-1)$. This ultimately produces our full family of stationary measures. The details of this argument are given in Proposition \ref{prop:stationarity2}.

\subsubsection{Possible extensions of this method}
The path-routing argument just described  applies to some other discrete exactly solvable models. In Section \ref{sec:LPP}, we present an analogue of Theorem \ref{theo:loggammaintro} for geometric and exponential half-space last-passage percolation (LPP). The exponential case is a limit of the log-gamma case; the geometric case is not though yields to a similar proof. There are a number of other potential directions to consider beyond this.

Instead of considering partition functions associated to one single polymer path, one may also consider the partition function for $\ell$ non-intersecting paths, whose distribution can still be exactly computed in certain cases \cite{corwin2014tropical, borodin2014macdonald, o2014geometric, barraquand2018half}. By employing a similar method as described in Section \ref{sec:loggammaproofsketch}, but forcing several weights to diverge, our method may shed light on stationary measures for the half-space multi-layer discrete SHEs (in the spirit of \cite{o2016multi}) satisfied by the non-intersecting path partition functions (see  \cite{barraquand2022stationary} in the full-space case).

Besides polymer models, it may be possible to implement path routing arguments to derive stationary measures for more general models that fit into the family of higher-spin stochastic six-vertex model \cite{corwin2015stochastic, borodin2016higher}. In full-space this family is well studied. Only a few half-space versions have been constructed \cite{barraquand2018stochastic, barraquand2018half, barraquand2022random}, leaving this a fertile and  challenging direction.

\subsubsection*{Outline} In Section \ref{sec:loggamma} we consider the log-gamma polymer and prove Theorem \ref{theo:loggammaintro}. In Section \ref{sec:LPP}, we obtain similar results for half-space LPP. In Section \ref{sec:KPZ}, we prove Theorem \ref{theo:invariantKPZintro}, using a general intermediate disorder half-space polymer to SHE convergence  results that is stated and proved in Section \ref{sec:generalconvergence} (Theorem \ref{thm:SHEsheetlimit}). This is applied to the log-gamma polymer in Theorem \ref{prop:KPZconvergence} and the necessary matching of notation that goes into the proof of that is provided in Section \ref{sec:proofconvergence}.

\subsubsection*{Acknowledgments}
	We thank Shalin Parekh and Xuan Wu for helpful conversations, and we are grateful to the anonymous referees for their careful reading and valuable comments.   

	G.B. was partially supported by ANR grant ANR-21-CE40-0019. I.C was partially supported by the NSF through grants DMS:1937254, DMS:1811143, DMS:1664650, as well as through a Packard Fellowship in Science and Engineering, a Simons Fellowship, a Miller Visiting Professorship from the Miller Institute for Basic Research in Science, and a W.M. Keck Foundation Science and Engineering Grant. Both G.B. and I.C. also wish to acknowledge the NSF grant DMS:1928930 which supported their participation in a fall 2021 semester program at MSRI in Berkeley, California.

\section{Stationary half-space log-gamma polymers}
\label{sec:loggamma}
We consider the half-space log-gamma polymer model \cite{o2014geometric}, following the notation of \cite{barraquand2018half}.
\begin{definition}[Inhomogeneous log-gamma polymer]
	Let  $\diag, \alpha_1, \alpha_2, \dots $ be real parameters  such that $\alpha_i+\diag>0$ for all $i \geqslant 1$ and $\alpha_i+\alpha_j>0$ for all $i\neq j\geqslant 1$. Let  $\big(\wweight_{i,j}\big)_{i\geqslant j}$ be a family of independent random variables such that for $i>j, \wweight_{i,j}\sim \Gammainv(\alpha_i + \alpha_j)$ and $\wweight_{i,i}\sim \Gammainv(\diag + \alpha_i)$.
	The partition function of the half-space log-gamma polymer is defined as (see Figure~\ref{fig:halfspaceloggamma})
	$$\zd(n,m) = \sum_{\pi: (1,1) \to (n,m)} \prod_{(i,j)\in \pi} \wweight_{i,j}, $$
	where the sum is over up-right paths from $(1,1)$ to $(n,m)$ in the octant $ \lbrace (i,j)\in \Z_{>0}^2 : i\geqslant j \rbrace $.
	\label{def:loggammahalfspace}
\end{definition}
\begin{figure}
	\begin{center}
		\begin{tikzpicture}[scale=0.8]
			\draw[thick, gray]  (0,0) -- (5.5,5.5);
			\draw (-.5, -.2) node{{\footnotesize $(1,1)$}};
			\draw (9.6, 5.3) node{{\footnotesize $(n,m)$}};
			\draw[-stealth'] (9.5,2) node[anchor=west, align=center]{{\footnotesize $\wweight_{i,j} \sim \Gammainv(\alpha_i + \alpha_j)$} \\ \footnotesize  for $i>j$} to[bend left] (8,2);
			\draw[-stealth'] (2.5,4.5) node[anchor=south]{{\footnotesize $\wweight_{i,i} \sim \Gammainv(\diag + \alpha_i) $}} to[bend right] (3,3);
			\clip (-0.05,-0.05) -- (9.5,-0.05) -- (9.5,5.5) -- (5.49, 5.49) -- (-0.05,-0.05);
			\draw[dashed, gray] (0,0) grid (10,6);
			\draw[ultra thick] (0,0) -- (1,0) -- (2,0) -- (2,1) -- (2,2) -- (3,2) -- (4,2) -- (4,3) -- (5,3) -- (6,3) -- (6,4) -- (7,4) -- (8,4) -- (8,5) -- (9,5);
		\end{tikzpicture}
	\end{center}
	\caption{An admissible path in the half-space log-gamma polymer (Definition \ref{def:loggammahalfspace}).}
	\label{fig:halfspaceloggamma}
\end{figure}
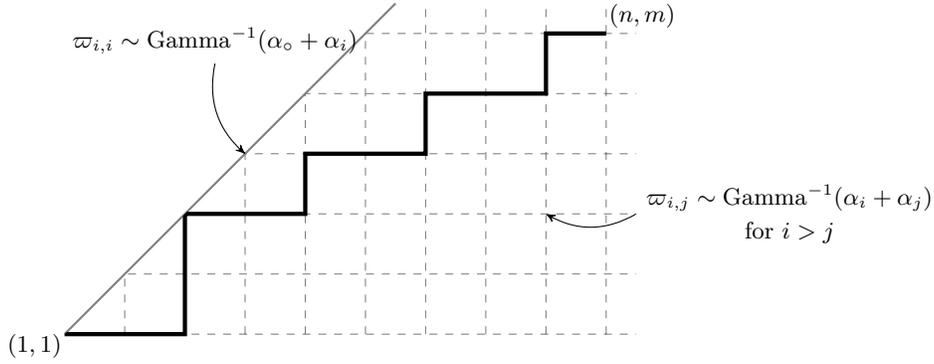

In this section we define two types of stationary half-space log-gamma partition functions (with respect to homogeneous bulk parameters). The first one, denoted $\zd^{\rm stat}_u(n,m)$, was introduced in \cite{barraquand2020half} and depends on a bulk parameter $\alpha$ and a single boundary parameter $u$. The second one (which is new and includes the first as a special case), denoted $\zd^{\rm stat}_{u,v}(n,m)$, depends on $\alpha, u$, and an additional parameter $v$ that controls the free energy drift at infinity. Both these partition functions are stationary in the sense that the law of $\zd^{\rm stat}_{u,v}(m+k,m)/\zd^{\rm stat}_{u,v}(m,m)$, as a process in $k$, does not depend on $m$ (at least for $m\geq 2$). This stationary process will be matched to $z_{u,v}$ defined in \eqref{eq:defzuv} and shown to satisfy \eqref{eq:discreteSHE}, thus proving Theorem \ref{theo:loggammaintro}. Our stationary partition functions arise from specific specializations of the parameters of the inhomogeneous polymer model, generalizing what happens in the case of the full-space stationary log-gamma polymer model \cite{seppalainen2012scaling}.

\subsection{One-row stationary measures}
\label{sec:loggammaonerow}
We recall the stationary $\zd^{\rm stat}_u(n,m)$ from \cite{barraquand2020half}.
\begin{definition}
	For parameters $\alpha\in\R_{>0}$ and $u\in (-\alpha,\alpha)$, define
	\begin{equation}
		\zd^{\rm stat}_u(n,m) := \frac{\zd(n,m)}{\wweight_{1,1}}
		\label{eq:defZustat}
	\end{equation}
	where $\zd(n,m)$ is the partition function from Definition \ref{def:loggammahalfspace} with $\diag=u, \alpha_1=-u$, and for all $i\geq 2$, $\alpha_i=\alpha$. Note that $\zd^{\rm stat}_u(n,m)$ does not depend on $\wweight_{1,1}$, so that it is defined even though $\diag+\alpha_1=0$. We define horizontal and vertical increments of the partition function as
	\begin{equation}
		U_{n,m} = \frac{\zd^{\rm stat}_u(n,m)}{\zd_u^{\rm stat}(n-1,m)}, \;\;V_{n,m} = \frac{\zd_u^{\rm stat}(n,m)}{\zd^{\rm stat}_u(n,m-1)}.
		\label{eq:defincrements}
	\end{equation}
\end{definition}

The partition function satisfies the recurrence, as in \eqref{eq:discreteSHE},
\begin{equation}
	\zd^{\rm stat}_u(n,m) = \wweight_{n,m}(\zd^{\rm stat}_u(n-1,m)+\zd^{\rm stat}_u(n,m-1)),
\end{equation}
where we fix $\zd^{\rm stat}_u(n,m)\equiv 0$ for  $(n,m)\notin\lbrace (i,j)\in \Z_{>0}^2 : i\geqslant j \rbrace $. Hence, we have
\begin{equation}
	U_{n,m} = \wweight_{n,m}\left( 1 +\frac{U_{n,m-1}}{V_{n-1,m}} \right),\;\;\;V_{n,m} =  \wweight_{n,m}\left( 1 +\frac{V_{n-1,m}}{U_{n,m-1}} \right).
	\label{eq:recurrenceforincrements}
\end{equation}
Stationary solutions to the recurrence equations \eqref{eq:recurrenceforincrements} were found in \cite[Lemma 3.2]{seppalainen2012scaling}.
\begin{lemma}
	Let $\alpha\in\R_{>0}$ and $u\in (-\alpha,\alpha)$. Assume that $U,V,w$ are independent random variables and let
	\begin{equation}
		U'=w\left(1+\frac{U}{V} \right), \;\; V'=w\left(1+\frac{V}{U} \right), \;\; w'=\left( \frac{1}{U} +\frac{1}{V}\right)^{-1}.
	\end{equation}
	If  $U\sim \mathrm{Gamma^{-1}}(\alpha+u)$, $V\sim \mathrm{Gamma^{-1}}(\alpha-u)$, $w\sim \mathrm{Gamma^{-1}}(2\alpha)$, then the triples $(U,V,w)$ and $(U', V', w')$ have the same distribution.
	\label{lem:seppalainenstationary}
\end{lemma}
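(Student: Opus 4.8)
The plan is to verify the claimed distributional identity by a direct change of variables on the three-dimensional density, in the spirit of the original argument of \cite[Lemma 3.2]{seppalainen2012scaling}. First I would note that the map $(U,V,w)\mapsto (U',V',w')$ is well-defined and invertible on $\mathbb R_{>0}^3$: from $w'$ and the two ratios one recovers $U,V,w$, since $U'/V' = U/V$ records the ratio, $w' = (U^{-1}+V^{-1})^{-1}$ combines with $U'=w(1+U/V)$ to untangle the rest, and $1/w = 1/U'+1/V'$ gives back $w$ (this last identity, $\frac1{U'}+\frac1{V'}=\frac1w$, is the key algebraic observation and should be recorded explicitly). So the statement reduces to checking that the pushforward of the product density $f_{\alpha+u}(U)f_{\alpha-u}(V)f_{2\alpha}(w)$, where $f_\theta(x) = \frac1{\Gamma(\theta)}x^{-\theta-1}e^{-1/x}$, equals $f_{\alpha+u}(U')f_{\alpha-u}(V')f_{2\alpha}(w')$.

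The cleanest way to organize the computation is to pass to the variables $(s,\rho,w)$ with $s = U^{-1}+V^{-1}$ (so $w' = 1/s$) and $\rho = U/V$; equivalently work with $(U^{-1},V^{-1},w^{-1})$, on which all maps are rational. In these coordinates one writes $U = \frac{1+\rho}{s}$, $V=\frac{1+\rho}{\rho s}$ for the inputs, and similarly $U' = w(1+\rho)$, $V' = w(1+\rho)/\rho$, $w'=1/s$ for the outputs; the exponential parts $e^{-1/U-1/V-1/w}$ and $e^{-1/U'-1/V'-1/w'}$ must be shown to agree, which uses exactly $1/U+1/V = s$ on one side and $1/U'+1/V' = 1/w$ on the other, together with the substitution trade between $w$ and $s$. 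Then I would match the power-law prefactors: collecting exponents of $U,V,w$ versus $U',V',w'$ and the Jacobian factor, the powers of $\rho$, of $s$, and of $w$ must each cancel, and this is precisely where the specific parameter choices $\alpha+u$, $\alpha-u$, $2\alpha$ (note $(\alpha+u)+(\alpha-u) = 2\alpha$) are used. The $\Gamma$-function normalizations cancel trivially since the same three constants appear on both sides.

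The main obstacle — really the only non-routine part — is bookkeeping the Jacobian of the transformation and confirming the exponent cancellation; it is an entirely mechanical check but easy to make a sign error in, so I would carry it out in the $(U^{-1},V^{-1},w^{-1})$ chart where the Jacobian is a ratio of monomials and the computation is most transparent. Alternatively, since this lemma is quoted verbatim from \cite[Lemma 3.2]{seppalainen2012scaling}, one may simply cite it; I would include the change-of-variables verification only for completeness. Either way, no probabilistic input beyond the explicit inverse-gamma density is needed.
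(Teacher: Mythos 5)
The paper itself does not prove Lemma \ref{lem:seppalainenstationary} -- it is quoted directly from \cite[Lemma 3.2]{seppalainen2012scaling} -- so there is no ``paper's proof'' to compare against; your plan is the standard and correct argument. A few confirmations and one simplifying remark. Your key identities $U'/V'=U/V$ and $1/U'+1/V'=1/w$ are right, and together with $1/U+1/V=1/w'$ they show the exponential part $e^{-1/U-1/V-1/w}$ is invariant under the map. In fact the map $T:(U,V,w)\mapsto(U',V',w')$ is an \emph{involution} (applying it twice returns $(U,V,w)$), which streamlines the bookkeeping: it suffices to check $f(T(x))\,\lvert\det DT(x)\rvert=f(x)$. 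Carrying out the computation in your suggested chart $(p,q,r)=(1/U,1/V,1/w)$, where the map becomes $(p,q,r)\mapsto\big(\tfrac{rp}{p+q},\tfrac{rq}{p+q},p+q\big)$ and the variables are $\mathrm{Gamma}(\alpha+u)$, $\mathrm{Gamma}(\alpha-u)$, $\mathrm{Gamma}(2\alpha)$, one finds $\lvert\det DT\rvert=r/(p+q)$ and the ratio of power-law prefactors equals $(p+q)/r$, so they cancel exactly, with the parameter relation $(\alpha+u)+(\alpha-u)=2\alpha$ used precisely where you said it would be. So there is no gap; the only thing left to do is the mechanical Jacobian and exponent check, which succeeds as described.
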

Using this \cite[Proposition 4.5]{barraquand2020half} showed that $\zd^{\rm stat}_u$ is stationary in the following sense.
\begin{proposition}
	For $\alpha\in\R_{>0}$ and $u\in (-\alpha,\alpha)$, $\zd^{\rm stat}_{u}(m,m)$ satisfies:
	\begin{enumerate}%
		\item  Let $k\in \mathbb Z_{\geq 1}$ and consider points $\mathbf p_1=(n_1, m_1), \dots, \mathbf p_k=(n_k,m_k)$ along a down right-path in the octant (i.e., $n_i\geqslant m_i$, and the points are ordered such that $n_1\leqslant \dots \leqslant n_k$ and $m_1\geqslant \dots \geqslant m_k\geqslant 0$). Then, the joint distribution of
		$$
		\left( \frac{\zd^{\rm stat}_{u}\big((m,m)+\mathbf p_i\big)}{\zd^{\rm stat}_{u}\big((m,m)+\mathbf p_1\big)}\right)_{1\leq i\leq k}
		$$
		does not depend on $m\geq 1$.
		Moreover, the increment ratios $\zd^{\rm stat}_{u}(\mathbf p_{i+1})/\zd^{\rm stat}_{u}(\mathbf p_i)$ are independent, and distributed as
		$\zd^{\rm stat}_{u}(\mathbf p_{i+1})/\zd^{\rm stat}_{u}(\mathbf p_i) \sim   \mathrm{Gamma^{-1}}(\alpha-u) $ if $\mathbf p_{i+1}-\mathbf p_i=(1,0)$ and $\zd^{\rm stat}_{u}(\mathbf p_{i})/\zd^{\rm stat}_{u}(\mathbf p_{i+1}) \sim  \mathrm{Gamma^{-1}}(\alpha+u) $ if $\mathbf p_{i}-\mathbf p_{i+1}=(0,1)$.
		
		\item In particular, for $k\in \Z_{\geq 0}$, the process
		$
		k\mapsto \frac{\zd^{\rm stat}_{u}(m+k,m)}{\zd^{\rm stat}_{u}(m,m)}
		$
		has the same distribution for any $m\in \Z_{\geq 1}$, a $\Gammainv(\alpha-u)$ multiplicative random walk  starting from $1$.
	\end{enumerate}
	\label{prop:stationarity}
\end{proposition}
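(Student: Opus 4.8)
The plan is to recast the increment recurrence \eqref{eq:recurrenceforincrements} so that each update becomes an instance of the single-cell invariance of Lemma \ref{lem:seppalainenstationary}, and then to propagate the invariant law row by row, in the spirit of the Burke property for exactly solvable polymers. With the notation $U_{n,m},V_{n,m}$ of \eqref{eq:defincrements}, the key observation is that at a cell $(n,m+1)$ with $n\ge m+2$ the pair of identities in \eqref{eq:recurrenceforincrements} is exactly the map of Lemma \ref{lem:seppalainenstationary} applied to the triple $(V_{n-1,m+1},\,U_{n,m},\,\wweight_{n,m+1})$, sending it to $(V_{n,m+1},\,U_{n,m+1},\,w')$ with $w'=(1/V_{n-1,m+1}+1/U_{n,m})^{-1}$. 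One is forced to put the vertical increment into the first ($\Gammainv(\alpha+u)$) slot of the Lemma and the horizontal one into the second ($\Gammainv(\alpha-u)$) slot, matching the laws that vertical and horizontal increments of $\zd^{\rm stat}_u$ must carry.

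I would then argue by induction on $m\ge1$, the hypothesis being that the increments $(U_{m+k,m})_{k\ge1}$ along row $m$ are i.i.d.\ $\Gammainv(\alpha-u)$. The base case $m=1$ is immediate, since the unique lattice path from $(1,1)$ to $(n,1)$ gives $\zd^{\rm stat}_u(n,1)=\prod_{j=2}^n\wweight_{j,1}$, with $\wweight_{j,1}\sim\Gammainv(\alpha-u)$ for $j\ge2$ (this is where the specialization $\alpha_1=-u$ enters). For the inductive step, I would run \eqref{eq:recurrenceforincrements} along row $m+1$, seeded by the diagonal value $V_{m+1,m+1}=\wweight_{m+1,m+1}\sim\Gammainv(\alpha+u)$ --- the one place the boundary value $\diag=u$ is used, and it gives the seed exactly the parameter the Lemma wants. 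Feeding in the fresh bulk weights $\wweight_{n,m+1}\sim\Gammainv(2\alpha)$ and the i.i.d.\ row-$m$ increments $U_{n,m}\sim\Gammainv(\alpha-u)$, Lemma \ref{lem:seppalainenstationary} shows cell by cell that $(V_{n,m+1},U_{n,m+1},w')$ is an independent triple of $\Gammainv(\alpha+u),\Gammainv(\alpha-u),\Gammainv(2\alpha)$ variables; a routine conservation-law bookkeeping --- whose only content is the observation that the weights entering any two of the relevant quantities are disjoint, supplemented by the usual conditioning trick at each step --- then gives that $(U_{m+k+1,m+1})_{k\ge1}$ are i.i.d.\ $\Gammainv(\alpha-u)$, each new increment being independent of all previously generated ones. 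This closes the induction.

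Part (2) of the Proposition is exactly the inductive hypothesis at row $m$, together with $\zd^{\rm stat}_u(m,m)/\zd^{\rm stat}_u(m,m)=1$. For part (1), the same transfer, read along downward runs as well as rightward runs, also produces the vertical increments $V_{n,m'}$ inside the shifted octant $\{(i,j):m\le j\le i\}$, and Lemma \ref{lem:seppalainenstationary} yields the joint independence of all increments along any down-right path there, with the stated marginals $\Gammainv(\alpha-u)$ on rightward steps and $\Gammainv(\alpha+u)$ on downward steps. Since the only fresh randomness entering at each new level consists of i.i.d.\ bulk weights $\Gammainv(2\alpha)$ and diagonal weights $\Gammainv(\alpha+u)$, whose laws do not depend on $m$, neither does the joint law of these increments --- equivalently, one may simply re-index rows by $j\mapsto j-m+1$.

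I expect the interface with the diagonal to be the only genuinely delicate point: one must verify that, once the formally infinite weight $\wweight_{1,1}$ has been divided out, the diagonal weights $\wweight_{n,n}$ enter the transfer recursion only as $V$-seeds, independent of the rows below them, and that the $\alpha\pm u$ asymmetry between horizontal and vertical increments is respected at every step. Away from the diagonal this is the classical argument of \cite{seppalainen2012scaling}, adapted to the half-space as in \cite{barraquand2020half}.
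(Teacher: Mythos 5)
Your proof is correct and follows the route the paper itself points to: the Burke-property argument built on Lemma~\ref{lem:seppalainenstationary}, inducting row by row via \eqref{eq:recurrenceforincrements} with the diagonal weight $\wweight_{m+1,m+1}\sim\Gammainv(\alpha+u)$ as the $V$-seed and the specialization $\alpha_1=-u$ furnishing the base case. The paper does not give its own proof of Proposition~\ref{prop:stationarity} but defers to \cite[Proposition 4.5]{barraquand2020half}, where precisely this argument is carried out.
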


\subsection{Two-row stationary measures}
\label{sec:loggammatworow}
We now introduce a new class of stationary partition functions which includes the previously known example as a special case when $u=v$.
\begin{definition}\label{def:tworow}
	Fix any $\alpha\in \R_{>0}$ and $u,v\in \mathbb R$ with $u>-\alpha, v\in (-\alpha, \alpha)$ and $v<u$ (the case $u=v$ can be accessed subsequently by a limit, see the proof of of Theorem \ref{theo:loggammaintro} in Section \ref{sec:discretespatialprocess}).
	If $u+v>0$, then we  define
	\begin{equation}
		\zd^{\rm stat}_{u,v}(n,m) := \frac{\zd(n,m)}{\wweight_{2,1}}
		\label{eq:defZuvstat}
	\end{equation}
	where  $\zd(n,m)$ is the partition function from Definition \ref{def:loggammahalfspace} with $\diag=u, \alpha_1=v, \alpha_2=-v$, and for all $i\geq 3$, $\alpha_i=\alpha$. Note that $\zd^{\rm stat}_{u,v}(n,m)$ does not depend on $\wweight_{2,1}$, so that it is defined even though $\alpha_1+\alpha_2=0$. Any ratio of stationary partition functions of the form $\frac{\zd^{\rm stat}_{u,v}(n_1,m_1)}{\zd^{\rm stat}_{u,v}(n_2,m_2)}$ does not depend on $\wweight_{1,1}$, so these ratios are well-defined even when $u+v\leqslant 0$ (e.g., by simply setting $\wweight_{1,1}\equiv 1$).
\end{definition}

\begin{proposition}
	For $\alpha\in \R_{>0}$ and $u,v\in \mathbb R$ such that $u>-\alpha, v\in (-\alpha, \alpha)$ and $v<u$. The partition function $\zd^{\rm stat}_{u,v}(m,m)$ satisfies the following properties:
	\begin{enumerate}%
		\item Let $k\in \mathbb Z_{\geq 1}$ and consider points $\mathbf p_1=(n_1, m_1), \dots, \mathbf p_k=(n_k,m_k)$ along a down right-path in the octant (by that we mean that $n_i\geqslant m_i$, and the points are ordered such that $n_1\leqslant \dots \leqslant n_k$ and $m_1\geqslant \dots \geqslant m_k\geqslant 0$). Then, the joint distribution of
		$$\left( \frac{\zd^{\rm stat}_{u,v}((m,m)+\mathbf p_i)}{\zd^{\rm stat}_{u,v}((m,m)+\mathbf p_1)}\right)_{1\leq i\leq k}$$
		does not depend on $m$, as long as $m\geqslant 2$.
		\item 	In particular, for $k\in \Z_{\geq 0}$, the process
		$k\mapsto  \frac{\zd^{\rm stat}_{u,v}(m+k,m)}{\zd^{\rm stat}_{u,v}(m,m)}$
		has the same distribution for any $m\geqslant 2$.
	\end{enumerate}
	\label{prop:stationarity2}
\end{proposition}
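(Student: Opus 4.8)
\emph{Proof proposal.} The plan is to combine two ingredients: the parameter symmetry of Lemma~\ref{lem:symmetry} and a \emph{path-routing} trick, whereby an inverse-gamma weight tuned to have parameter $0$ (hence a.s.\ $+\infty$) forces every contributing polymer path through one prescribed vertex, the symmetry letting us place that vertex wherever we wish. First I would record the elementary observation that the first step of any up-right path out of $(1,1)$ must be to $(2,1)$, so every path to a point other than $(1,1)$ carries the factors $\wweight_{1,1}$ and $\wweight_{2,1}$; consequently all ratios $\zd^{\rm stat}_{u,v}(\mathbf p)/\zd^{\rm stat}_{u,v}(\mathbf p')$ are functions of the environment $(\wweight_{i,j})_{(i,j)\neq(1,1),(2,1)}$ alone, and we may treat $\zd^{\rm stat}_{u,v}$ accordingly (in particular the degenerate cases $u+v\le 0$ and $v=0$ are harmless at the level of ratios).

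Fix $m\geq 2$ and the points $\mathbf p_1,\dots,\mathbf p_k$ on a down-right path, and write $\mathbf q_i=(m,m)+\mathbf p_i$; every $\mathbf q_i$ has vertical coordinate $\geq m$. For small $\eps>0$, consider the inhomogeneous polymer with $\diag=u$, $\alpha_1=v$, $\alpha_2=-v+\eps$, $\alpha_i=\alpha$ for $i\geq 3$, so that $\wweight_{2,1}\sim\Gammainv(\eps)$ and, when $u+v>0$, all weights are non-degenerate. Apply Lemma~\ref{lem:symmetry} with the permutation $\sigma$ of $\{1,\dots,m\}$ that fixes every index $\geq m+1$ and satisfies $\sigma(m-1)=1,\ \sigma(m)=2$: in the permuted polymer the relocated parameters are $\alpha'_{m-1}=v$, $\alpha'_m=-v+\eps$, $\alpha'_j=\alpha$ for $j\geq m+1$, the boundary parameter is still $u$, and the divergence has moved to $\wweight_{m,m-1}\sim\Gammainv(\eps)$. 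The symmetry gives $\bigl(\zd(\mathbf q_i)/\zd(\mathbf q_1)\bigr)_i\overset{d}{=}\bigl(\zd'(\mathbf q_i)/\zd'(\mathbf q_1)\bigr)_i$, where $\zd'$ denotes the permuted polymer. Coupling the $\wweight_{2,1}$'s so that $1/\wweight_{m,m-1}\to0$ a.s.\ as $\eps\to0$ and splitting $\zd'(\mathbf q_i)=\wweight_{m,m-1}A_i+B_i$ according to whether a path uses the vertex $(m,m-1)$ (with the weight at that vertex omitted from $A_i$), one gets $\zd'(\mathbf q_i)/\zd'(\mathbf q_1)\to A_i/A_1$ in the limit; and since a path through $(m,m-1)$ factors at that vertex, $A_i/A_1=D_i/D_1$, where $D_i$ is the partition function of paths from $(m,m-1)$ to $\mathbf q_i$ in the downstream octant $\{(a,b):a\geq m,\ b\geq m-1\}$, with the weight at $(m,m-1)$ omitted. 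On the other side, $\zd(\mathbf q_i)/\zd(\mathbf q_1)\to\zd^{\rm stat}_{u,v}(\mathbf q_i)/\zd^{\rm stat}_{u,v}(\mathbf q_1)$ as $\eps\to0$. Passing the distributional identity to the limit yields $\bigl(\zd^{\rm stat}_{u,v}(\mathbf q_i)/\zd^{\rm stat}_{u,v}(\mathbf q_1)\bigr)_i\overset{d}{=}\bigl(D_i/D_1\bigr)_i$.

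The last step is to identify the law of $(D_i)_i$. Translating the downstream octant by $-(m-2)(1,1)$ carries $(m,m-1)\mapsto(2,1)$ and $(m,m)\mapsto(2,2)$, preserves the octant and the diagonal, and — checking vertex by vertex — matches every weight distribution with the corresponding weight of the polymer with $\diag=u$, $\alpha_1=v$, $\alpha_2=-v$, $\alpha_i=\alpha$ ($i\geq3$), i.e.\ precisely the environment defining $\zd^{\rm stat}_{u,v}$; under this identification $D_i=\zd^{\rm stat}_{u,v}\bigl((2,2)+\mathbf p_i\bigr)$. Hence
\[
\Bigl(\tfrac{\zd^{\rm stat}_{u,v}((m,m)+\mathbf p_i)}{\zd^{\rm stat}_{u,v}((m,m)+\mathbf p_1)}\Bigr)_{1\leq i\leq k}\ \overset{d}{=}\ \Bigl(\tfrac{\zd^{\rm stat}_{u,v}((2,2)+\mathbf p_i)}{\zd^{\rm stat}_{u,v}((2,2)+\mathbf p_1)}\Bigr)_{1\leq i\leq k},
\]
whose right-hand side does not involve $m$; this proves (1), and (2) follows by specializing to $\mathbf p_i=(i-1,0)$.

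I expect the substance to lie almost entirely in Lemma~\ref{lem:symmetry}, which comes from the half-space Whittaker-function description of the partition functions; granting it, the present proposition is a fairly mechanical consequence of that symmetry plus the remark that tuning two inhomogeneities to sum to zero routes all paths through a fixed vertex. The points that still need care are (i) pinning down the exact statement and range of validity of Lemma~\ref{lem:symmetry} — in particular that it covers joint laws along a down-right path, and that the parameter choices here (including $u+v\le0$, where $\wweight_{1,1}$ degenerates) are admissible, which is precisely why phrasing everything through $\wweight_{1,1}$-free ratios is convenient — and (ii) the routine but fiddly verifications: the $\eps\to0$ interchange, the domination of $\zd'(\mathbf q_i)$ by paths through $(m,m-1)$, and the vertex-by-vertex parameter matching in the penultimate paragraph.
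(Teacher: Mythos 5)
Your proposal takes essentially the same route as the paper's proof: the parameter symmetry of Lemma~\ref{lem:symmetry}, the $\epsilon$-regularization $\alpha_2=-v+\epsilon$ making $\wweight_{2,1}\sim\Gammainv(\epsilon)$ diverge, permuting inhomogeneities to move the divergence to $(m,m-1)$, and then letting $\epsilon\to 0$ to route all paths through that vertex so the ratios factor and can be matched to the $m=2$ case. The one small discrepancy is the handling of $u+v\le 0$: the paper establishes the identity for $u+v>0$ and then invokes analytic continuation of the densities, whereas you note ratios are $\wweight_{1,1}$-free and flag the parameter range as a point to verify — an adequate but not fully closed-off treatment of the same minor technicality.
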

A crucial step  in proving Proposition \ref{prop:stationarity2} is the following symmetry in the law of the partition functions for the   inhomogeneous log-gamma model from Definition \ref{def:loggammahalfspace}.
\begin{lemma} For $m\in \Z_{\geq 1}$ and $k\in \Z_{\geq 0}$, the law of
	\begin{equation}
		\left(\zd(m,m),\zd(m+1,m) \dots,\zd(m+k,m) \right)
		\label{eq:jointdistribution}
	\end{equation}
	is invariant with respect to permutations of the parameters $\alpha_1, \dots, \alpha_m$.
	\label{lem:symmetry}
\end{lemma}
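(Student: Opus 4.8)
The plan is to derive this symmetry from the connection between half-space log-gamma partition functions and half-space Whittaker functions, following the approach of \cite{o2014geometric} and \cite{barraquand2018half}. The key structural fact is that the partition function $\zd(n,m)$ (suitably interpreted, via the geometric RSK correspondence for the half-space / ``symmetric'' octant) is the top corner of a triangular array whose joint distribution is a half-space Whittaker measure. Concretely, the vector $\left(\zd(m,m),\zd(m+1,m),\dots,\zd(m+k,m)\right)$ can be written as an integral transform — against half-space Whittaker functions $\Psi_\lambda$ indexed by a spectral variable $\lambda$ — of a kernel built from the gamma-type weight functions $\prod_i \Gamma(\alpha_i + \lambda_j)^{-1}$ (together with the boundary factor depending on $\diag$ and the Sklyanin-type measure on $\lambda$). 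The symmetry in $\alpha_1,\dots,\alpha_m$ then follows because this spectral representation is manifestly symmetric in the $\alpha_i$: permuting them only permutes the factors in $\prod_{i,j}\Gamma(\alpha_i+\lambda_j)^{-1}$, leaving the integral — hence the law of \eqref{eq:jointdistribution} — unchanged.

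First I would recall precisely the integral formula from \cite[]{o2014geometric, barraquand2018half} expressing the joint Laplace transform (or the Mellin–Barnes-type integral for the joint density) of $\left(\zd(n_1,m),\dots,\zd(n_k,m)\right)$ along a horizontal slice $\{(\cdot,m)\}$ in terms of half-space Whittaker functions. The relevant point is that in this formula the dependence on the inhomogeneity parameters $\alpha_1,\dots,\alpha_m$ enters \emph{only} through the product $\prod_{i=1}^m \prod_{j} \Gamma(\alpha_i + \lambda_j)$ (in a denominator), where $\lambda = (\lambda_1,\dots)$ is the integration variable of the Whittaker transform, while the parameters $\alpha_{m+1},\alpha_{m+2},\dots$ (those beyond row $m$) and $\diag$ enter through other factors that are untouched. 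Second, I would observe that this product is a symmetric function of $(\alpha_1,\dots,\alpha_m)$, and that the contour and the remaining integrand (the Whittaker functions $\Psi_\lambda$, the Sklyanin measure, the test-function pairing encoding which $\zd(m+i,m)$ we look at) do not involve $\alpha_1,\dots,\alpha_m$ at all. Hence the whole integral is invariant under permuting $\alpha_1,\dots,\alpha_m$, giving the claim.

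An alternative, more self-contained route — which I would mention as a remark or fall back on if one prefers to avoid quoting the full Whittaker machinery — is to prove the single adjacent transposition $\alpha_i \leftrightarrow \alpha_{i+1}$ (for $1\le i \le m-1$) directly, and then generate $S_m$ from adjacent transpositions. For the transposition one can use a local ``Yang–Baxter / bijectivization'' move on the geometric RSK dynamics, or the column/row insertion commutation in geometric RSK, which is known to intertwine the weight distributions with the two orderings of $\alpha_i,\alpha_{i+1}$; this is the same mechanism that underlies Lemma \ref{lem:seppalainenstationary} (which is exactly the $m=\infty$, local version of the symmetry). One must be slightly careful that the half-space boundary weights $\wweight_{i,i}\sim\Gammainv(\diag+\alpha_i)$ participate correctly: swapping $\alpha_i,\alpha_{i+1}$ with $i+1\le m$ changes both a bulk weight on the diagonal and off-diagonal weights, and the local move must cover the on-diagonal case as well. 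This is the version carried out via half-space Macdonald processes in \cite{barraquand2018half}, of which the present statement is a gamma-parameter degeneration.

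The main obstacle is bookkeeping rather than conceptual: one must make sure that the Whittaker integral representation being invoked is valid for the \emph{inhomogeneous} half-space model with an arbitrary horizontal slice at row $m$ (not just the fully homogeneous model or the point-to-point case), that the parameters $\alpha_1,\dots,\alpha_m$ indeed decouple from everything except the symmetric $\Gamma$-product, and that the contour of integration can be chosen uniformly so the symmetry argument is not obstructed by poles moving across the contour as the $\alpha_i$ are permuted (this is fine since the admissibility constraints $\alpha_i+\alpha_j>0$, $\alpha_i+\diag>0$ are themselves symmetric in $\alpha_1,\dots,\alpha_m$). Once the representation is in hand, the symmetry is immediate.
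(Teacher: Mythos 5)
Your main route captures the paper's conceptual argument exactly: the law of the row-$m$ marginals depends on $\alpha_1,\dots,\alpha_m$ only through a Whittaker function, which is symmetric in its index, and the remaining parameters $\diag, \alpha_{m+1},\dots$ factor off. This matches what the paper states at the start of its proof and essentially reproduces the alternative proof the paper gives in the remark after the lemma (via the joint density from \cite[Eq.~(3.16)--(3.17)]{barraquand2021identity}, which factors as a term independent of $\boldsymbol\alpha$ times the Whittaker function $\psi_{\boldsymbol\alpha}$). The paper's written proof, however, takes a slightly different route that you do not develop: it works one level up, with half-space \emph{Macdonald} processes. After restricting to the row-$m$ marginal and using the branching rule, the $a_1,\dots,a_m$ dependence collapses into a single Macdonald polynomial $P_{\lambda}(a_1,\dots,a_m)$, which is manifestly symmetric; the log-gamma statement then follows by the $t=0$, $q\to 1$ degeneration to the Whittaker setting via \cite[Prop.~6.27]{barraquand2018half}. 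The Macdonald route has the virtue of being purely algebraic/combinatorial at the symmetry step, sidestepping the contour/pole and integrability bookkeeping that you correctly flag as the main nuisance in a direct Whittaker-integral argument. Your alternative route via local adjacent-transposition (Yang--Baxter/geometric RSK) moves is not used in the paper; it is plausible but the crux — showing that the local move covers the on-diagonal boundary weight $\wweight_{i,i}\sim\Gammainv(\diag+\alpha_i)$ correctly — is exactly where the half-space difficulty lies, and you leave this unverified.
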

\begin{proof}
	This type of symmetry is a known hallmark for probabilistic systems encoded in terms of specializations of symmetric functions. In our case, it can be seen that the joint distribution of $\left(\zd(m+k_{1},m), \dots,\zd(m+k_{\ell},m) \right)$ depends on the parameters $\boldsymbol \alpha=(\alpha_1, \dots, \alpha_m)$ only through the values of the Whittaker function $\psi_{\boldsymbol \alpha}(x)$. This function, however, is invariant by permutation of the $\alpha_i$. Since this result is not given explicitly in the works \cite{o2014geometric, barraquand2018half} on the integrability of the half-space log-gamma polymer, we produce a brief (not entirely self-contained) proof of it here.
	
	The symmetry property can be deduced from \cite{barraquand2018half} by using the more general framework of half-space Macdonald processes. Consider any down-right path in the octant formed by the points $\mathbf p_1=(m,m), \mathbf p_2=(m+1,m), \dots, \mathbf p_{k+1}=(m+k,m)$ first progressing towards the east, and then continuing southward with $\mathbf p_{k+2}=(m+k,m-1), \dots, \mathbf p_{k+m}=(m+k,1)$. The half-space Macdonald process for this path is the following probability measure on sequences $(\lambda^{\mathbf p_i})_{0\leq i\leq k+m}$ of integer partitions indexed by the points along the path:
	\begin{multline*}
		\mathbb P\big( (\lambda^{\mathbf p_i})_{0\leq i\leq k+m} \big)   \propto   \mathcal E_{\lambda^{\mathbf p_1}}(a_{\circ}) Q_{\lambda^{\mathbf p_2}/\lambda^{\mathbf p_1}}(a_{m+1}) \dots Q_{\lambda^{\mathbf p_{k+1}}/\lambda^{\mathbf p_{k}}}(a_{m+k}) \\ \times  P_{\lambda^{\mathbf p_{k+1}}/\lambda^{\mathbf p_{k+2}}}(a_{m}) \dots P_{\lambda^{\mathbf p_{k+m-1}}/\lambda^{\mathbf p_{k+m}}}(a_{2})P_{\lambda^{\mathbf p_{k+m}}}(a_1).
	\end{multline*}
	Here the $a_i$ are positive real numbers, $P_{\lambda/\mu}(x)$ and $Q_{\lambda/\mu}(x)$ are single-variable specializations of skew Macdonald polynomials and $\mathcal E_{\lambda}$ is another single-variable specialized polynomial defined in \cite[Equation (15)]{barraquand2018half}.
	Macdonald symmetric functions  were introduced in  \cite{macdonald1988new, macdonald1995symmetric}. They can be evaluated into any set of variables $x=\lbrace x_1, \dots, x_{m}\rbrace$ and are then symmetric polynomials in the $x_i$, with coefficients depending on two extra deformation parameters $q,t$.
	Since we will be interested only in the distribution of $\lambda^{\mathbf p_i}$ for $1\leq i\leq k+1$ (that is,  corresponding to points $\mathbf p_i$ on the $m$th row), we may sum the probability distribution over the other integer partitions $\lambda^{\mathbf p_i}$ for $i>k+1$. Use the branching rule for Macdonald polynomials $P$ (e.g., \cite[Equation (23)]{barraquand2018half}) and we obtain that
	$$
	\mathbb P\big( (\lambda^{\mathbf p_i})_{0\leq i\leq k+1} \big) \propto \mathcal E_{\lambda^{\mathbf p_1}}(a_{\circ}) Q_{\lambda^{\mathbf p_2}/\lambda^{\mathbf p_1}}(a_{m+1}) \dots Q_{\lambda^{\mathbf p_{k+1}}/\lambda^{\mathbf p_{k}}}(a_{m+k}) P_{\lambda^{\mathbf p_{k+1}}}(a_1, \dots, a_{m}).
	$$
	Thus, the law of $(\lambda^{\mathbf p_i})_{0\leq i\leq k+1} $ is symmetric with respect to permuting $a_1, \dots, a_m$.
	
	To conclude the proof, we set $t=0$, and write $a_i=q^{\alpha_i}, a_{\circ}=q^{\alpha_{\circ}}$. Then, \cite[Proposition 6.27]{barraquand2018half} shows that, jointly for all points $(n,m)$ along the path $\left( \mathbf p_i\right)_{1\leq i\leq k+m}$, we have
	$$ (1-q)^{n+m-1} q^{-\lambda_1^{(n,m)}} \xRightarrow[q\to 1]{} \zd(n,m),$$
	(i.e., weak convergence) where $\zd(n,m)$ is exactly the partition function from Definition \ref{def:loggammahalfspace}.
\end{proof}
\begin{remark}
	The joint distribution of the partition functions \eqref{eq:jointdistribution} can be computed explicitly using the geometric RSK correspondence. The joint density appears in \cite{barraquand2021identity} (which uses results from \cite{o2014geometric, bisi2019point}) -- namely, as the integrand of \cite[Equation (3.16)]{barraquand2021identity}. This can be factorized as \cite[Equation (3.17)]{barraquand2021identity} (which does not depend on $\boldsymbol{\alpha}$) times a Whittaker function indexed by $\boldsymbol{\alpha}$ (which is symmetric in the $\alpha_i$). This yields another proof of Lemma \ref{lem:symmetry}.
\end{remark}

\begin{proof}[Proof of Proposition \ref{prop:stationarity2}]
	Part (1) clearly implies part (2), thus it suffices to show that provided  $\alpha\in \R_{>0}$ and $u,v\in \mathbb R$ such that $u>-\alpha, v\in (-\alpha, \alpha)$ and $v<u$, we have
	\begin{equation}
		\left(\frac{\zd^{\rm stat}_{u,v}\big((m,m)+\mathbf p_i\big)}{\zd^{\rm stat}_{u,v}\big((m,m)+\mathbf p_1\big)}  \right)_{1\leq i\leq k} \overset{(d)}{=}
		\left(\frac{\zd^{\rm stat}_{u,v}\big((2,2)+\mathbf p_i\big)}{\zd^{\rm stat}_{u,v}\big((2,2)+\mathbf p_1\big)} \right)_{1\leq i\leq k}.
		\label{eq:toprove}
	\end{equation}
	As observed earlier in Definition \ref{def:tworow}, the ratios $\zd^{\rm stat}_{u,v}((m,m)+\mathbf p_i)/\zd^{\rm stat}_{u,v}((m,m)+\mathbf p_1)$ do not depend on $\wweight_{1,1}$. We will establish \eqref{eq:toprove} under the additional restriction that $u+v>0$ and then can analytically continue this to hold without that restriction (the analyticity of the density for both sides is straightforward).
	With this in mind, let us now assume that $u+v>0$ and for $\epsilon>0$, define the partition function $\zd^{\rm stat;\epsilon}_{u,v}(n,m)$ to equal the partition function $\zd(n,m)$ from  Definition \ref{def:loggammahalfspace} with $\diag=u, \alpha_1=v, \alpha_2=-v+\epsilon$, and for all $i\geq 3$, $\alpha_i=\alpha$. We have that
	\begin{equation}
		\left(\frac{\zd^{\rm stat;\epsilon}_{u,v}\big((m,m)+\mathbf p_i\big)}{\wweight_{2,1}} \right)_{1\leq i\leq k}\xRightarrow[\epsilon\to 0]{}  \left(\zd^{\rm stat}_{u,v}\big((m,m)+\mathbf p_i\big)\right)_{1\leq i\leq k}.
		\label{eq:limitepsilonto0}
	\end{equation}
	Similarly define $\widetilde{\zd}^{\rm stat;\epsilon}_{u,v}(n,m)$ to equal the partition function $\zd(n,m)$ from  Definition \ref{def:loggammahalfspace} with $\diag=u$, $\alpha_{m-1}=v, \alpha_m=-v+\epsilon$ and  for $1\leq i\leq m-2$, $\alpha_i=\alpha$.
	We claim that
	\begin{equation}
		\left(\zd^{\rm stat;\epsilon}_{u,v}((m,m)+\mathbf p_i)\right)_{1\leq i\leq k} \overset{(d)}{=}\left(\widetilde{\zd}^{\rm stat;\epsilon}_{u,v}((m,m)+\mathbf p_i)  \right)_{1\leq i\leq k}.
		\label{eq:identityfrompermutation}
	\end{equation}
	Lemma \ref{lem:symmetry} ensures that \eqref{eq:identityfrompermutation} is true when the $\mathbf p_i$ define a flat path (so that the points $(m,m)+\mathbf p_i$ are along the $m$th row). The result is true a fortiori for any down-right path above the $m$th row. Indeed, the partition functions on such a path can be written in terms of the partition functions on the $m$th row (whose distribution is symmetric in $\alpha_1, \dots, \alpha_m$) and the weights strictly above the $m$th row (whose distribution does not depend on $\alpha_1, \dots, \alpha_m$).

	\begin{figure}
		\begin{center}
			\begin{tikzpicture}[scale=0.58]
				\begin{scope}
					\draw[thick, gray]  (0,0) -- (6.5,6.5);
					\draw (-.5, -.3) node{{\footnotesize $(1,1)$}};
					\draw (9.6, 6.3) node{{\footnotesize $(m,m)+\mathbf p$}};
					\filldraw[red, draw=black] ([xshift=-0.2cm,yshift=-0.2cm]1,0) rectangle ++(0.4,0.4);
					\filldraw[green, draw=black] (0,0) circle(0.1);
					\filldraw[magenta, draw=black] (1,1) circle(0.1);
					\foreach \x in {2,3,...,9}
					\filldraw[blue, draw=black] (\x,0) circle(0.1);
					\foreach \x in {2,3,...,9}
					\filldraw[blue, draw=black] (\x,1) circle(0.1);
					\draw (9.7,5) node{\footnotesize $m$};
					\draw (10,4) node{\footnotesize $m-1$};
					\draw (9.7,1) node{\footnotesize $2$};
					\draw (9.7,0) node{\footnotesize $1$};
					\draw[ultra thick] (0,0) -- (1,0) -- (2,0) -- (2,1) -- (2,2) -- (3,2) -- (4,2) -- (4,3) -- (5,3) -- (7,3) -- (7,4)  -- (8,4) -- (8,5) -- (9,5)-- (9,6);
					\clip (0,0) -- (9.5,0) -- (9.5,6.5) -- (6.5, 6.5) -- (0,0);
					\draw[dashed, gray] (0,0) grid (10,6);
				\end{scope}
				\begin{scope}[xshift=11.7cm]
					\draw[thick, gray]  (0,0) -- (6.5,6.5);
					\draw (-.5, -.3) node{{\footnotesize $(1,1)$}};
					\draw (9.6, 6.3) node{{\footnotesize $(m,m)+\mathbf p$}};
					\filldraw[red, draw=black] ([xshift=-0.2cm,yshift=-0.2cm]5,4) rectangle ++(0.4,0.4);
					\filldraw[green, draw=black] (4,4) circle(0.1);
					\filldraw[magenta, draw=black] (5,5) circle(0.1);
					\foreach \x in {6,7,...,9}
					\filldraw[blue, draw=black] (\x,4) circle(0.1);
					\foreach \x in {6,7,...,9}
					\filldraw[blue, draw=black] (\x,5) circle(0.1);
					\clip (0,0) -- (9.5,0) -- (9.5,6.5) -- (6.5, 6.5) -- (0,0);
					\draw[dashed, gray] (0,0) grid (10,6);
					\draw[ultra thick] (0,0) -- (1,0) -- (2,0) -- (2,1) -- (2,2) -- (3,2) -- (4,2) -- (4,3) -- (5,3) -- (5,4)  -- (8,4) -- (8,5) -- (9,5)-- (9,6);
				\end{scope}
			\end{tikzpicture}
		\end{center}
		\caption{We illustrate \eqref{eq:epszero} with $k=1$, $m=6$, $\mathbf p=(4,1)$. Left: We depict one possible path in the partition function $\zd^{\rm stat;\epsilon}((m,m)+\mathbf p)$ and mark in green the point $(1,1)$ which is weighted by a $\Gammainv(u+v)$ random variable (this point does not matter in partition functions ratios though), in magenta the point $(2,2)$ which is weighted by a $\Gammainv(u-v+\epsilon)$ random variable, in blue the points weighted by a $\Gammainv(\alpha+v)$ or $\Gammainv(\alpha-v+\epsilon)$ random variable and with a red square the point $(2,1)$ which is weighted by a $\Gammainv(\epsilon)$ random variable that diverges as $\epsilon$ goes to zero. Right: We similarly depict one possible path in the partition function $\widetilde \zd^{\rm stat;\epsilon}((m,m)+\mathbf p)$ and used the same marked points convention as on the left. Because the weight on the red square diverges as $\epsilon \to 0$, $\widetilde \zd^{\rm stat;\epsilon}((m,m)+\mathbf p)$ can be approximated by the sum over the paths going through that point, so that the partition function factorizes.}
		\label{fig:factorization}
	\end{figure}
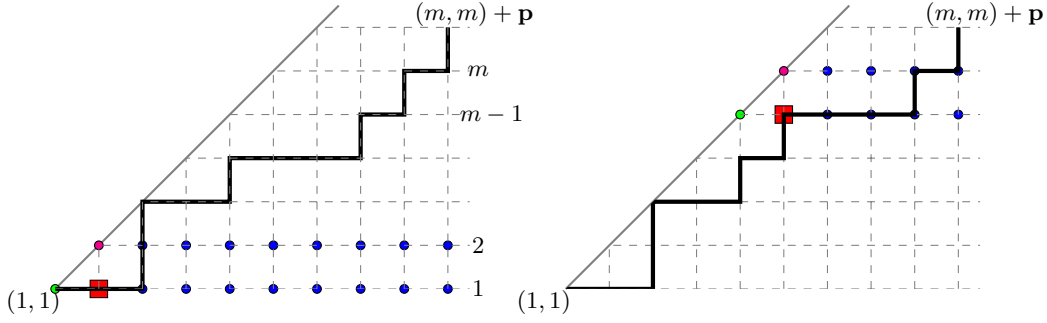
	
	Since the weight $\wweight_{m,m-1}$ diverges as $\epsilon\to 0$, we see that (see Figure \ref{fig:factorization})
	\begin{equation}\label{eq:epszero}
		\left( \frac{\widetilde{\zd}^{\rm stat;\epsilon}_{u,v}\big((m,m)+\mathbf p_i\big)}{\wweight_{m,m-1}}\right)_{1\leq i\leq k} =   \left(  \frac{\widetilde{\zd}^{\rm stat;\epsilon}_{u,v}(m,m-1)}{\wweight_{m,m-1}}\cdot \frac{\widetilde{\zd}^{\rm stat;\epsilon}_{u,v}\big((m,m-1)\to (m,m)+\mathbf p_i\big)}{\wweight_{m,m-1}} + \textrm{E}^{\epsilon}_i   \right)_{1\leq i\leq k}
	\end{equation}
	where the term $\textrm{E}^{\epsilon}_i$  accounts for the parts in the partition function $\widetilde{\zd}^{\rm stat;\epsilon}_{u,v}\big((m,m)+\mathbf p_i\big)$ that do not go through $(m,m-1)$ divided through by $\wweight_{m,m-1}$. Since $\wweight_{m,m-1}\sim \Gammainv(\eps)$, $\mathbb E[\frac{1}{\wweight_{m,m-1}}]=\epsilon$, and since $\wweight_{m,m-1}$ is independent from the weights arising in the sum over paths in  $\textrm{E}^{\epsilon}_i$, we can bound $\mathbb E[\vert \textrm{E}^{\epsilon}_i \vert ]$ by a constant times $\epsilon$, so that  $\textrm{E}^{\epsilon}_i$ converges to $0$ in $L^1$ as $\epsilon$ goes to zero.
	The notation $\widetilde{\zd}^{\rm stat;\epsilon}_{u,v}(\mathbf s\to \mathbf t)$ denotes the partition function of paths from $\mathbf s$ to $\mathbf t$ (in the octant) where the set of weights $(\wweight_{i,j})$ are the same as those used in the definition of  $\widetilde{\zd}^{\rm stat;\epsilon}_{u,v}$ above.
	Observe that by definition
	\begin{equation*} 
		\left(\frac{\widetilde{\zd}^{\rm stat;\epsilon}_{u,v}\big((m,m-1)\to (m,m)+\mathbf p_i\big)}{\wweight_{m,m-1}}\right)_{1\leq i\leq k}\overset{(d)}{=}\left(\frac{\zd^{\rm stat;\epsilon}_{u,v}\big((2,1)\to (2,2)+\mathbf p_i\big)}{\wweight_{2,1}}\right)_{1\leq i\leq k}.
	\end{equation*}
	By \eqref{eq:limitepsilonto0}, the right-hand side above weakly converge to the partition functions  \begin{equation} 
		\left(\frac{\zd^{\rm stat}_{u,v}((2,2)+\mathbf p_i)}{\wweight_{1,1}}\right)_{1\leq i\leq k}.
		\label{eq:defXi}
	\end{equation}
	Combining this with \eqref{eq:limitepsilonto0} yields 
	\begin{equation}
		\left( \frac{\widetilde{\zd}^{\rm stat;\epsilon}_{u,v}((m,m)+\mathbf p_i)}{\widetilde{\zd}^{\rm stat;\epsilon}_{u,v}((m,m)+\mathbf p_1)}\right)_{1\leq i\leq k} \xRightarrow[\epsilon\to 0]{} \left( \frac{\zd^{\rm stat}_{u,v}((2,2)+\mathbf p_i)}{\zd^{\rm stat}_{u,v}((2,2)+\mathbf p_1)}\right)_{1\leq i\leq k}.
		\label{eq:ratioepsilonto0}
	\end{equation}
	Indeed, denoting the random variables in the right-hand side of \eqref{eq:epszero} as $A^{\epsilon} \cdot  X_i^{\epsilon} + E_i^{\epsilon}$, we know that $X_i^{\epsilon}\Rightarrow X_i$ where the $X_i$ are the random variables in \eqref{eq:defXi}. Since the $E_i^{\epsilon}$ go to $0$ in $L^1$, the couple of variables $(E_i^{\epsilon}/A^{\epsilon}, E_1^{\epsilon}/A^{\epsilon})$ goes to zero in $L^1$ as well, so that 
	$(X_i^{\epsilon} + E_i^{\epsilon}/A^{\epsilon},X_1^{\epsilon} + E_1^{\epsilon}/A^{\epsilon} ) \Rightarrow (X_i,X_1)$ and we conclude that 
	$$ \frac{A^{\epsilon}X_i^{\epsilon} + E_i^{\epsilon}}{A^{\epsilon}X_1^{\epsilon} + E_1^{\epsilon}} = \frac{X_i^{\epsilon} + E_i^{\epsilon}/A^{\epsilon}}{X_1^{\epsilon} + E_1^{\epsilon}/A^{\epsilon}} \Rightarrow \frac{X_i}{X_1},$$
	which is exactly \eqref{eq:ratioepsilonto0}. 
	Finally, combining \eqref{eq:identityfrompermutation} and \eqref{eq:ratioepsilonto0} together, we obtain \eqref{eq:toprove}, which concludes the proof.
\end{proof}

\subsection{The stationary process $\zd_{u,v}$}
\label{sec:discretespatialprocess} We prove Theorem \ref{theo:loggammaintro} along with properties of $\zd_{u,v}$.

\begin{proof}[Proof of Theorem \ref{theo:loggammaintro}]
	Let us consider in more details the process
	$
	k\mapsto \frac{\zd^{\rm stat}_{u,v}(m+k,m)}{\zd^{\rm stat}_{u,v}(m,m)}
	$
	which is well-defined as long as $u>-\alpha, v\in (-\alpha, \alpha)$ and $v<u$ (without needing to assume that $u+v>0$). By Proposition \ref{prop:stationarity2} the law of this process does not depend on $m$. For $m=2$
	\begin{equation}
		\frac{\zd^{\rm stat}_{u,v}(2+k,2)}{\zd^{\rm stat}_{u,v}(2,2)} = \frac{1}{\wweight_{2,2}}\sum_{\ell=2}^{k+2} \,\,\prod_{i=3}^{\ell} \wweight_{i,1} \prod_{j=\ell}^{k+2} \wweight_{j,2},
		\label{eq:explicittworowinitialdata}
	\end{equation}
	where $\wweight_{2,2}\sim \Gammainv(u-v)$, $\wweight_{i,1}\sim \Gammainv(\alpha+v)$ for $i\geq 3$, and $\wweight_{i,2}\sim \Gammainv(\alpha-v)$ for $i\geq 3$. We see that in the term $\ell=2$, in \eqref{eq:explicittworowinitialdata}, the weight $\wweight_{2,2}$ simplifies, so that
	\begin{equation}\label{eq:relationtozuv}
		\left( \frac{\zd^{\rm stat}_{u,v}(2+k,2)}{\zd^{\rm stat}_{u,v}(2,2)}\right)_{k\in \Z_{\geq 0}} \overset{(d)}{=} \left( \zd_{u,v}(k) \right)_{k\in \Z_{\geq 0}}
	\end{equation}
	where the process  $\zd_{u,v}$ is defined in \eqref{eq:defzuv}.
	
	Now define, for all $n\geq m\geq 0$,
	$\zd(n,m)=\frac{\zd^{\rm stat}_{u,v}(n+2,m+2)}{\zd^{\rm stat}_{u,v}(2,2)}$ and observe that $\zd(n,m)$ satisfies the recurrence relation \eqref{eq:discreteSHE} with initial data $\zd(\cdot,0) \overset{(d)}{=}\zd_{u,v}(\cdot)$. Hence, Theorem \ref{theo:loggammaintro} follows immediately from Proposition \ref{prop:stationarity2}. Note that the statement of Proposition \ref{prop:stationarity2} assumes that $v<u$ so that  Theorem \ref{theo:loggammaintro} is established for any  $\alpha\in \R_{>0}$ and $u,v\in \R$ such that $u>-\alpha, v\in (-\alpha, \alpha)$ and $v<u$. In order to treat the case $v=u\leq 0$, when the process $\zd_{u,v}$ is a $\Gammainv(\alpha-u)$  multiplicative random walk, we can appeal to Proposition \ref{prop:stationarity} instead of Proposition \ref{prop:stationarity2} or argue by continuity from the case $v<u$. This is because when $v=u$, the weight $\wweight_{2,2}$ diverges. Thus, it is only paths that go through this point that contribute and this reduces the two-row partition function ratios to those of the one-row partition functions.
\end{proof}

We close by demonstrating two of the claims in Remark \ref{rem:discreteprops}.
%
\begin{lemma}
	When $u+v=0$, the process $\zd_{u,v}(k)$ has the law of a $\Gammainv(\alpha-u)$ multiplicative random walk.
	This implies  that
	$$
	\left\{\frac{\zd_{u,v}^{\rm stat}(n+1,m+1)}{\zd_{u,v}^{\rm stat}(2,2)}\right\}_{n,m\in \Z_{\geq 1}}\overset{(d)}{=}\left\{\zd_{u}^{\rm stat}(n,m)\right\}_{n,m\in \Z_{\geq 1}}.
	$$
	\label{prop:specialcase}
\end{lemma}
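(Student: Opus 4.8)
The hypothesis $u+v=0$ together with $v\le\min\{0,u\}$ forces $v=-u\le 0$, hence $u\ge 0$. The case $u=v=0$ is immediate ($\zd_{0,0}=r_2$ is a $\Gammainv(\alpha)$ multiplicative random walk by~\eqref{eq:defzuv} and the convention $1/\wweight=0$, and the second identity then follows by continuity from the case $u>0$), so I assume $u>0$, in which case $v=-u<0<u$, so $v<u$ and we are in the regime of Definition~\ref{def:tworow} and of~\eqref{eq:relationtozuv}. By~\eqref{eq:relationtozuv} the first claim amounts to showing that $k\mapsto \zd^{\rm stat}_{u,v}(2+k,2)/\zd^{\rm stat}_{u,v}(2,2)$ is a $\Gammainv(\alpha-u)$ multiplicative random walk started from $1$.

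The plan is to use the parameter symmetry of Lemma~\ref{lem:symmetry} to relocate the weight whose parameter degenerates when $u+v=0$. In $\zd^{\rm stat}_{u,v}$ the inhomogeneities are $\diag=u,\ \alpha_1=v,\ \alpha_2=-v,\ \alpha_i=\alpha\ (i\ge 3)$, so both $\wweight_{1,1}$ (as $\diag+\alpha_1=u+v=0$) and $\wweight_{2,1}$ (as $\alpha_1+\alpha_2=0$) degenerate — but both cancel in the ratios at hand. To invoke Lemma~\ref{lem:symmetry} I would first regularize: for small $\epsilon>0$ let $\zd^{\epsilon}$ be the partition function of Definition~\ref{def:loggammahalfspace} with $\diag=u,\ \alpha_1=v+\epsilon,\ \alpha_2=-v,\ \alpha_i=\alpha\ (i\ge 3)$; all positivity constraints hold for $\epsilon$ small, since $\diag+\alpha_1=\epsilon$, $\alpha_1+\alpha_2=\epsilon$, $\diag+\alpha_2=2u>0$, $\diag+\alpha_i=u+\alpha>0$ and $\alpha_1+\alpha_i=\alpha-u+\epsilon>0$ (using $u<\alpha$, which holds because $v=-u>-\alpha$). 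The explicit formula~\eqref{eq:explicittworowinitialdata} then shows, since $\wweight_{1,1}$ and $\wweight_{2,1}$ cancel, that $\big(\zd^{\epsilon}(2+k,2)/\zd^{\epsilon}(2,2)\big)_{k\ge 0}$ converges in law, as $\epsilon\to 0$, to $\big(\zd_{u,v}(k)\big)_{k\ge 0}$. Now apply Lemma~\ref{lem:symmetry} with $m=2$ to swap $\alpha_1$ and $\alpha_2$: the vector $\big(\zd^{\epsilon}(2,2),\dots,\zd^{\epsilon}(2+k,2)\big)$ has the same law as the corresponding vector for the partition function $\widetilde{\zd}^{\epsilon}$ with $\alpha_1=-v=u,\ \alpha_2=v+\epsilon=-u+\epsilon$, in which $\wweight_{2,2}\sim\Gammainv(\diag+\alpha_2)=\Gammainv(\epsilon)$ diverges as $\epsilon\to 0$. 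Since every up--right path to $(2+k,2)$ passes through $(1,1)$ and $(2,1)$, one has $\widetilde{\zd}^{\epsilon}(2,2)=\wweight_{1,1}\wweight_{2,1}\wweight_{2,2}$ and $\widetilde{\zd}^{\epsilon}(2+k,2)=\wweight_{1,1}\wweight_{2,1}\sum_{\ell=2}^{2+k}\prod_{i=3}^{\ell}\wweight_{i,1}\prod_{j=\ell}^{2+k}\wweight_{j,2}$, and only the $\ell=2$ summand carries the factor $\wweight_{2,2}$, so
\[
\frac{\widetilde{\zd}^{\epsilon}(2+k,2)}{\widetilde{\zd}^{\epsilon}(2,2)}=\prod_{j=3}^{2+k}\wweight_{j,2}+\frac{1}{\wweight_{2,2}}\sum_{\ell=3}^{2+k}\prod_{i=3}^{\ell}\wweight_{i,1}\prod_{j=\ell}^{2+k}\wweight_{j,2},
\]
where $\wweight_{j,2}\sim\Gammainv(\alpha-u+\epsilon)$ are i.i.d., $\wweight_{i,1}\sim\Gammainv(\alpha+u)$ are i.i.d., and $\wweight_{2,2}\sim\Gammainv(\epsilon)$ is independent of everything else. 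As $\epsilon\to 0$ the first term converges, as a process in $k$, to a $\Gammainv(\alpha-u)$ multiplicative random walk started from $1$, while the second term tends to $0$ in probability: the remaining sum is tight (it converges in law to a finite limit) and $1/\wweight_{2,2}\to 0$ in probability. This last limiting step is precisely the path-routing/factorization argument carried out in the proof of Proposition~\ref{prop:stationarity2} around~\eqref{eq:epszero}, and I would import it essentially verbatim; it is the one genuinely delicate point (controlling the contribution of the paths that avoid $(2,2)$ as the weight there blows up). Combining the three facts above shows that $\big(\zd_{u,v}(k)\big)_{k\ge 0}$ is a $\Gammainv(\alpha-u)$ multiplicative random walk, which is the first claim.

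For the second claim I would observe that both fields $\big\{\zd^{\rm stat}_{u,v}(n+1,m+1)/\zd^{\rm stat}_{u,v}(2,2)\big\}_{n,m\in\Z_{\ge 1}}$ and $\big\{\zd^{\rm stat}_{u}(n,m)\big\}_{n,m\in\Z_{\ge 1}}$ arise as the \emph{same} deterministic map applied to a bottom-row process and a family of bulk weights, with inputs of the same joint law. Indeed, dividing $\zd(n,m)$ by the scalars $\wweight_{2,1}$ (resp.\ $\wweight_{1,1}$) and by the constant $\zd^{\rm stat}_{u,v}(2,2)$ preserves the recurrence~\eqref{eq:discreteSHE}; in both cases the weights appearing strictly above the bottom row are i.i.d.\ $\Gammainv(2\alpha)$ off the diagonal and i.i.d.\ $\Gammainv(\alpha+u)$ on the diagonal, they carry only the second index $j\ge 3$ (resp.\ $j\ge 2$), and hence are independent of the bottom row, which carries only second index $j\le 2$ (resp.\ $j=1$). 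Finally, by the first claim and~\eqref{eq:relationtozuv} the bottom row $\zd^{\rm stat}_{u,v}(n+1,2)/\zd^{\rm stat}_{u,v}(2,2)$ is a $\Gammainv(\alpha-u)$ multiplicative random walk from $1$, and so is $\zd^{\rm stat}_{u}(n,1)=\prod_{i=2}^{n}\wweight_{i,1}$ (directly, since $\wweight_{i,1}\sim\Gammainv(\alpha-u)$ in that model, or by Proposition~\ref{prop:stationarity}(2) with $m=1$ and $\zd^{\rm stat}_u(1,1)=1$). As the two fields are built by the same map from inputs with identical law, they are equal in distribution.
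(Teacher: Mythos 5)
Your proof is correct, but for the first claim it takes a genuinely different route from the paper. The paper's own argument is a Burke-type computation: it observes that when $u+v=0$, the vertical increment $V_{2,2}$ and the horizontal increments $U_{i,1}$ along the first row of $\zd^{\rm stat}_{u,v}$ land exactly in the stationary configuration of Lemma~\ref{lem:seppalainenstationary} (Sepp\"al\"ainen's local stationarity), so the horizontal increments $U_{i,2}$ on the second row are immediately seen to be i.i.d.\ $\Gammainv(\alpha-u)$ --- no limit, no regularization, no parameter symmetry. You instead invoke the parameter-permutation symmetry (Lemma~\ref{lem:symmetry}), regularize with $\epsilon$, swap $\alpha_1\leftrightarrow\alpha_2$ so that $\wweight_{2,2}\sim\Gammainv(\epsilon)$ blows up, and run a path-routing/degeneration argument as in the proof of Proposition~\ref{prop:stationarity2}. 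Your route is conceptually close to the alternative the paper sketches informally right after Lemma~\ref{rem:symmetryvtominusv} (swapping $v\to -v$ reduces $u+v=0$ to the trivial case $u=v$), and it has the virtue of being in the same spirit as the main path-routing strategy of the paper; the paper's chosen proof via Lemma~\ref{lem:seppalainenstationary} is shorter and avoids taking limits, at the cost of relying on a distinct ingredient (the Burke lemma) that the rest of the two-row construction does not otherwise use. Your verification of the $\epsilon\to0$ control ($\mathbb E[1/\wweight_{2,2}]=\epsilon\to0$, tightness of the residual sum) is sound and matches the corresponding step around~\eqref{eq:epszero}. For the second claim, your ``same map, same input law'' comparison of the two fields is essentially identical to the paper's argument.
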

\begin{proof}
	We start with the proof that $\zd_{u,v}$ is a $\Gammainv(\alpha-u)$  multiplicative random walk. Recall that  $\zd_{u,v}(k) = \frac{\zd(2+k,2)}{\wweight_{1,1}\wweight_{1,2}\wweight_{2,2}}$ where $\zd(2+k,2)$ is as in Definition \ref{def:loggammahalfspace} with $\alpha_{\circ}=u, \alpha_1=v, \alpha_2=-v$ and $\alpha_i=\alpha$ for $i\geq 3$. Observe that the  increment ratios (that we define as in \eqref{eq:defincrements} replacing $\zd_u^{\rm stat}$ by $\zd(2+k,2)$) are such that
	$ V_{2,2}\sim \Gammainv(u-v)$ and $U_{i,1}\sim  \Gammainv(\alpha+v)$ for $i\geq 3$.
	Since the weights $\wweight_{1,1}$ is always canceled in the ratios, we may simply assume that $\wweight_{1,1}\equiv1$ (so that we do not need to assume that $u+v>0$).	Now, assuming that $u+v=0$, the vertical increment ratio  $ V_{2,2}$ becomes such that $ V_{2,2}\sim \Gammainv(2u)$ (because  $\overline{\omega}_{2,2}\sim \mathrm{Gamma}^{-1}(2u)$); the horizontal increment ratios along the first row are independent with $U_{i,1}\sim  \Gammainv(\alpha-u)$ (because $\wweight_{i,1}\sim \Gammainv(\alpha-u)$ for all  $i\geq 3$); and the weights along the second row are $\wweight_{i,2}\sim \Gammainv(\alpha+u)$, and also independent. This parametrization of weights allows to apply Lemma \ref{lem:seppalainenstationary}, which tell us that  horizontal increments on the second row are independent, and for all $i\geq 3$, $U_{i,2}\sim  \Gammainv(\alpha-u)$. This proves that $\zd_{u,v}$ is a $\Gammainv(\alpha-u)$ multiplicative random walk.
	In terms of stationary partition functions, the distribution of ratio increments $U_{i,2}$ that we have just determined means  that, when $u+v=0$,
	$\left\{\frac{\zd_{u,v}^{\rm stat}(n+1,2)}{\zd_{u,v}^{\rm stat}(2,2)}\right\}_{n\in \Z_{\geq 1}}\overset{(d)}{=}\left\{\zd_{u}^{\rm stat}(n,1)\right\}_{n\in \Z_{\geq 1}}$.
	The partition functions $\zd_{u,v}^{\rm stat}$ and $\zd_{u}^{\rm stat}$ satisfy the same recurrence relation and we have just matched their initial data, hence we obtain the claimed distributional equality to complete the lemma.
\end{proof}

\begin{lemma}
	The law of the process $\zd_{u,v}(k)$ remains the same if we change $v$ into $-v$.
	\label{rem:symmetryvtominusv}
\end{lemma}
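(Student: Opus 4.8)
The plan is to recognize the substitution $v\mapsto -v$ as the transposition of the two inhomogeneity parameters $\alpha_1$ and $\alpha_2$ in the inhomogeneous model underlying $\zd^{\rm stat}_{u,v}$, and then to deduce the claim from the permutation symmetry of Lemma \ref{lem:symmetry}. Indeed, by \eqref{eq:relationtozuv} we have, whenever $u>-\alpha$, $v\in(-\alpha,\alpha)$ and $v<u$,
$$\big(\zd_{u,v}(k)\big)_{k\in\Z_{\geq 0}}\overset{(d)}{=}\left(\frac{\zd^{\rm stat}_{u,v}(2+k,2)}{\zd^{\rm stat}_{u,v}(2,2)}\right)_{k\in\Z_{\geq 0}},$$
and $\zd^{\rm stat}_{u,v}(n,m)=\zd(n,m)/\wweight_{2,1}$ is built from Definition \ref{def:loggammahalfspace} with $\diag=u$, $\alpha_1=v$, $\alpha_2=-v$ and $\alpha_i=\alpha$ for $i\geq 3$ (Definition \ref{def:tworow}). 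Replacing $v$ by $-v$ amounts to using instead $\alpha_1=-v$, $\alpha_2=v$, $\alpha_i=\alpha$ for $i\geq 3$, i.e.\ the same family of parameters with $\alpha_1$ and $\alpha_2$ interchanged. Since $\wweight_{2,1}$ cancels in ratios, it therefore suffices to prove that the law of $\big(\zd(2+k,2)/\zd(2,2)\big)_{k\geq 0}$ is invariant under swapping $\alpha_1$ and $\alpha_2$.

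I would carry this out as follows. If $\alpha_1+\alpha_2>0$ the invariance is immediate from Lemma \ref{lem:symmetry} applied with $m=2$: that lemma gives invariance of the joint law of $\big(\zd(2,2),\zd(3,2),\dots,\zd(2+k,2)\big)$ under permutations of $\alpha_1,\alpha_2$, hence invariance of any ratio of its entries. To handle the degenerate value $\alpha_1+\alpha_2=0$ occurring here, I would regularize exactly as in the proof of Proposition \ref{prop:stationarity2}: replace $\alpha_2$ by $-v+\epsilon$, apply Lemma \ref{lem:symmetry} with $m=2$ to the now non-degenerate parameters to swap $\alpha_1\leftrightarrow\alpha_2$, divide numerator and denominator by $\wweight_{2,1}\sim\Gammainv(\epsilon)$, and let $\epsilon\to 0$. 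On the left this converges, by \eqref{eq:limitepsilonto0}, to $\big(\zd^{\rm stat}_{u,v}(2+k,2)/\zd^{\rm stat}_{u,v}(2,2)\big)_{k}$; on the right, the mirror image of \eqref{eq:limitepsilonto0}, with the perturbation now on the $\alpha_1$–slot of $\zd^{\rm stat}_{u,-v}$ (the argument of \eqref{eq:limitepsilonto0} is insensitive to whether the $\Gammainv(\epsilon)$ weight at $(2,1)$ arises from perturbing $\alpha_1$ or $\alpha_2$), gives $\big(\zd^{\rm stat}_{u,-v}(2+k,2)/\zd^{\rm stat}_{u,-v}(2,2)\big)_{k}$. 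Combining with \eqref{eq:relationtozuv} for both $v$ and $-v$ then yields $\zd_{u,v}\overset{(d)}{=}\zd_{u,-v}$ as soon as $v<u$ and $-v<u$, which covers all $v\in(0,\alpha)$ with $v<u$. The remaining boundary case $v=u$ (so $-v=-u$, which can only occur when $u\leq\alpha$) is covered by Lemma \ref{prop:specialcase}, which identifies $\zd_{u,-u}$ as a $\Gammainv(\alpha-u)$ multiplicative random walk, the same law as $\zd_{u,u}=r_2$; alternatively one argues by continuity in $v$.

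The only delicate point is the $\epsilon\to 0$ passage — in particular checking that numerator and denominator converge jointly and that the denominator limit $\zd^{\rm stat}_{u,v}(2,2)$ is a.s.\ positive so that the ratio converges by the continuous mapping theorem. This, however, is precisely (and in fact a lighter instance of) the manipulation already performed in the proof of Proposition \ref{prop:stationarity2}, so it is routine here rather than a genuine obstacle; the real content of the lemma lies entirely in the permutation symmetry of Lemma \ref{lem:symmetry}.
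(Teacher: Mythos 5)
Your proof is correct and takes essentially the same route as the paper: the paper's proof of this lemma is the two-sentence observation that $v\mapsto -v$ is exactly the transposition $\alpha_1\leftrightarrow\alpha_2$ in the definition of $\zd^{\rm stat}_{u,v}$, and then appeals to Lemma \ref{lem:symmetry}. The additional $\epsilon$-regularization you supply to handle the degenerate weight $\wweight_{2,1}\sim\Gammainv(\alpha_1+\alpha_2)=\Gammainv(0)$ is correct and fills in a detail that the paper leaves implicit (it is exactly the manipulation from the proof of Proposition \ref{prop:stationarity2}, as you note), but it does not constitute a different approach.
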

\begin{proof}
	Recall the definition of $\zd_{u,v}$ from \eqref{eq:relationtozuv}. The statement then follows from the symmetry from Lemma \ref{lem:symmetry} since in the definition of $\zd^{\rm stat}_{u,v}$, the parameter attached to the first row is $\alpha_1=v$, and to the second row is $\alpha_2=-v$.
\end{proof}
This provides another route to prove Lemma \ref{prop:specialcase} since it suffices to treat the case where $u-v=0$ instead of assuming $u+v=0$. As noted at the end of the proof of  Theorem \ref{theo:loggammaintro}, when $u=v$, the factor $1/\wweight_{2,2}$ in \eqref{eq:explicittworowinitialdata} vanishes, so that only the term $\ell=2$ remains in the sum (see also the definition of $\zd_{u,v}$ in \eqref{eq:defzuv}) and the process is clearly a  $\Gammainv(\alpha-u)$ multiplicative random walk.

\section{Stationary half-space LPP}
\label{sec:LPP}

Geometric and exponential LPP in a half-space have been studied since early work of \cite{rains2000correlation, baik2001algebraic, baik2001asymptotics, sasamoto2004fluctuations} and more recently in \cite{baik2018pfaffian, betea2018free, bisi2019transition, betea2020stationary}. Our log-gamma results yield results for exponential LPP via a limit transition, and our method applies to the geometric case as well (though it does not occur as a direct limit of the polymer model).

\begin{definition}
	Let  $q_{\circ}, q_1, q_2, \dots $ be positive real parameters  such that $q_{\circ}q_i\in (0,1)$ for all $i \geqslant 1$ and $q_iq_j\in (0,1)$ for all $i\neq j\geqslant 1$. Let  $\big(g_{i,j}\big)_{i\geqslant j}$ be independent random variables with $g_{i,j}\sim \mathrm{Geo}(q_iq_j)$ for $i>j$  and $g_{i,i}\sim \mathrm{Geo}(q_{\circ}q_i)$.  $\mathrm{Geo}(q)$ denotes a geometrically distributed random variable with success parameter $p=1-q$ taking values in $\Z_{\geq 0}$.
	
	Let  $a_{\circ}, a_1, a_2, \dots $ be real parameters  such that $a_{\circ}+a_i>0$ for all $i \geqslant 1$ and $a_i+a_j>0 $ for all $i\neq j\geqslant 1$. Let  $\big(e_{i,j}\big)_{i\geqslant j}$ be a family of independent random variables such that for $i>j, e_{i,j}\sim \mathrm{Exp}(a_i+a_j)$ and $e_{i,i}\sim \mathrm{Exp}(a_{\circ}+a_i)$. The notation  $\mathrm{Exp}(a)$ denotes an exponentially distributed random variable with rate parameter $a$.

	Define the half-space geometric/exponential last passage times by
	$$G_{q_{\circ}, \mathbf q}(n,m) := \max_{\pi: (1,1) \to (n,m)} \sum_{(i,j)\in \pi} g_{i,j}, \qquad E_{a_{\circ}, \mathbf a}(n,m) := \max_{\pi: (1,1) \to (n,m)} \sum_{(i,j)\in \pi} e_{i,j},$$
	where $\max$ is over
	up-right paths from  $(1,1)$ and $(n,m)$ in the octant $ \lbrace (i,j)\in \Z_{>0}^2 : i\geqslant j \rbrace $.
	\label{def:LPPhalfspace}
\end{definition}
Similarly to Section \ref{sec:loggamma}, we define two types of stationary models\footnote{We could have defined  $G_{r,s}^{\rm stat}$ (resp. $ E_{u,v}^{\rm stat}$) without substracting the weight at vertex $(1,1)$, to be more consistent with the definition of $\zd_{u,v}^{\rm stat}$. However, such a choice would require to restrict to $rs<1$ (resp. $u+v>0$). To avoid introducing further notation to state Proposition \ref{prop:stationaryLPP} with the full range of parameters, it seemed more convenient to remove the weight at $(1,1)$. }, for each LPP model:
\begin{align*}
	G_r^{\rm stat}(n,m) &= G_{q_{\circ}, \mathbf q}(n,m)-g_{1,1} \text{ with }q_{\circ}=r, q_1=1/r, q_i=q \text{ for }i\geq 2, \\
	G_{r,s}^{\rm stat}(n,m)&= G_{q_{\circ}, \mathbf q}(n,m)-g_{1,1} - g_{2,1}\text{ with }q_{\circ}=r, q_1=s, q_2=1/s,  q_i=q \text{ for }i\geq 3,\\
	E_{u}^{\rm stat}(n,m) &= E_{a_{\circ}, \mathbf a}(n,m)-e_{1,1} \text{ with }a_{\circ}=u, a_1=-u, a_i=a \text{ for }i\geq 2, \\
	E_{u,v}^{\rm stat}(n,m)&= E_{a_{\circ}, \mathbf a}(n,m)-e_{1,1} - e_{2,1}\text{ with }a_{\circ}=u, a_1=v, a_2=-v,  a_i=a \text{ for }i\geq 3.
\end{align*}

\begin{proposition} Let $k\in \mathbb Z_{\geq 1}$ and consider points $\mathbf p_1=(n_1, m_1), \dots, \mathbf p_k=(n_k,m_k)$ along a down-right path in the octant.  Say that $M\in \left\lbrace G^{\rm stat}_{r}, E_{u}^{\rm stat}, G_{r,s}^{\rm stat},  E_{u,v}^{\rm stat}\right\rbrace$ is stationary if the law of
	$ \Big( M\big((m,m)+\mathbf p_i)\big)-M\big((m,m)+\mathbf p_1)\big)\Big)_{1\leq i\leq k}  $
	does not depend on $m$. Then, we have the following results:
	%
	\begin{enumerate}%
		\item[(1)] For $q\in (0,1)$ and $r>0$ such that $qr, q/r\in (0,1)$,  $G^{\rm stat}_{r}$ is stationary.
		\item[(2)]  For	$a>0$ and $u\in \mathbb R$ such that $\alpha\pm u>0$, $E^{\rm stat}_{u}$ is stationary.
		\item[(3)] For $q\in (0,1)$ and $r,s>0$ such that $qr, qs, q/s, r/s \in (0,1)$, $G^{\rm stat}_{r,s}$ is stationary.
		\item[(4)]  For $a>0$ and $u,v\in \mathbb R$ such that $a+u>0, a\pm v>0$ and $u-v>0$,  $ E^{\rm stat}_{u,v}$ is stationary.
	\end{enumerate}
	\label{prop:stationaryLPP}
\end{proposition}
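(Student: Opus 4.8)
The plan is to prove Proposition~\ref{prop:stationaryLPP} by the same two-ingredient scheme that establishes Proposition~\ref{prop:stationarity2}: (a) a symmetry lemma allowing one to permute the column inhomogeneities $q_1,\dots,q_m$ (resp.\ $a_1,\dots,a_m$) in the joint law of last-passage times along any down-right path weakly above the $m$-th row, and (b) a path-routing limit in which one inhomogeneity is tuned so that a single weight diverges, forcing the last-passage maximiser through a prescribed vertex. I would treat the geometric cases (1) and (3) directly by this scheme and deduce the exponential cases (2) and (4) as a zero-temperature limit of Theorem~\ref{theo:loggammaintro}: taking log-gamma weights with parameters scaled as $\alpha_\bullet=\beta a_\bullet$ and sending $\beta\to 0$, one has $\beta\log\wweight_{i,j}\Rightarrow\mathrm{Exp}(a_i+a_j)$, hence (a Laplace-type computation, since there are only finitely many paths) $\beta\log\zd(n,m)\Rightarrow E_{a_\circ,\mathbf a}(n,m)$, and under this limit the recurrence~\eqref{eq:discreteSHE} and the stationarity of $\zd^{\rm stat}_u$, $\zd^{\rm stat}_{u,v}$ pass to that of $E^{\rm stat}_u$, $E^{\rm stat}_{u,v}$. (Alternatively, (2) and (4) admit verbatim the proof of (1) and (3) with $\mathrm{Geo}(q_iq_j)$ replaced by $\mathrm{Exp}(a_i+a_j)$ throughout.)

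For ingredient (a) in the geometric case I would establish: for $m\in\Z_{\geq1}$ and $k\in\Z_{\geq0}$, the joint law of $\bigl(G_{q_\circ,\mathbf q}(m,m),\dots,G_{q_\circ,\mathbf q}(m+k,m)\bigr)$ is invariant under permutations of $q_1,\dots,q_m$; as in the proof of Lemma~\ref{lem:symmetry}, this upgrades a fortiori to last-passage times along any down-right path weakly above row~$m$, since those can be written in terms of the row-$m$ values and of weights strictly above row~$m$ whose law does not involve $q_1,\dots,q_m$. This is the Schur-function degeneration of the half-space Macdonald-process computation in Lemma~\ref{lem:symmetry}: half-space geometric LPP is encoded by a half-space (Pfaffian) Schur process \cite{baik2018pfaffian,barraquand2018half}, and after summing out the partitions above row~$m$ the row-$m$ marginal carries a factor $s_{\lambda}(q_1,\dots,q_m)$, symmetric in the $q_i$; unlike the polymer case no $q\to1$ limit is needed. (One could instead read the symmetry off an explicit Pfaffian formula for the joint law of the $G_{q_\circ,\mathbf q}(m+k_j,m)$ obtained via a geometric-RSK correspondence, in the spirit of the remark after Lemma~\ref{lem:symmetry}.)

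For ingredient (b) I would run the path-routing limit exactly parallel to the proof of Proposition~\ref{prop:stationarity2}, working first under the extra restriction $rs<1$ (so that every weight of the underlying inhomogeneous model is honestly defined) and then extending to the full range of (3) by analyticity of the joint probability mass function of the last-passage differences in $(q,r,s)$. For case (3): introduce the $\epsilon$-perturbed model with $q_1=s$, $q_2=(1-\epsilon)/s$, $q_i=q$ for $i\geq3$, so $g_{2,1}\sim\mathrm{Geo}(1-\epsilon)$; subtracting the path-universal weights at $(1,1)$ and $(2,1)$, the last-passage differences converge in law as $\epsilon\to0$ to those of $G^{\rm stat}_{r,s}$. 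By the symmetry lemma, the law of the last-passage times at the points $(m,m)+\mathbf p_i$ is unchanged under the permutation moving $\{s,(1-\epsilon)/s\}$ from positions $\{1,2\}$ to positions $\{m-1,m\}$ (a genuine permutation of $q_1,\dots,q_m$, since all other column parameters equal $q$), which places the diverging weight at $(m,m-1)$. Since $g_{m,m-1}\to+\infty$ in probability as $\epsilon\to0$ while the maximum over paths avoiding $(m,m-1)$ stays tight, the maximiser is eventually forced through $(m,m-1)$ and $G\bigl((m,m)+\mathbf p_i\bigr)=G^{\circ}_{(1,1)\to(m,m-1)}+g_{m,m-1}+G^{\circ}_{(m,m-1)\to(m,m)+\mathbf p_i}+o(1)$, where $G^{\circ}$ denotes last-passage not counting the endpoint weight; the first two terms are common to all $i$ and cancel in the differences $M((m,m)+\mathbf p_i)-M((m,m)+\mathbf p_1)$. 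A direct computation with the geometric parameters shows that, under the translation $(m,m-1)\mapsto(2,1)$, the environment governing $G^{\circ}_{(m,m-1)\to\cdot}$ has the same law as the one governing $G^{\rm stat}_{r,s}$ from $(2,1)$; letting $\epsilon\to0$ on both sides then yields that the law of $\bigl(M((m,m)+\mathbf p_i)-M((m,m)+\mathbf p_1)\bigr)_{1\leq i\leq k}$ is the same for every $m\geq2$. Case (1) is the one-row specialisation: perturb $q_1=(1-\epsilon)/r$ so $g_{1,1}\sim\mathrm{Geo}(1-\epsilon)$, swap $q_1\leftrightarrow q_m$ to move the divergence to the diagonal vertex $(m,m)$, and conclude as above, now for every $m\geq1$.

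I expect the main obstacle to be ingredient (a): making the geometric-LPP symmetry lemma rigorous, since it is not recorded explicitly in the half-space integrability literature and, as with Lemma~\ref{lem:symmetry}, one must either trace it through the half-space Schur/Macdonald machinery of \cite{barraquand2018half,baik2018pfaffian} or re-derive it from an explicit Pfaffian formula for the joint law along a row. Ingredient (b) should, if anything, be cleaner than in the polymer case: because LPP is a maximum rather than a sum, the contribution of paths avoiding the diverging vertex is killed simply because that weight tends to $+\infty$ while those contributions remain tight, so the $L^1$ bookkeeping of the error terms $\mathrm{E}^{\epsilon}_i$ in the proof of Proposition~\ref{prop:stationarity2} is not needed here.
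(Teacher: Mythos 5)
Your proposal matches the paper's proof in all essential respects: the exponential cases (2) and (4) are obtained as a $\beta\to 0$ limit of the log-gamma stationarity results (the paper uses the same scaling $\alpha_\bullet=\epsilon a_\bullet$ and notes $\epsilon\log\wweight\Rightarrow\mathrm{Exp}$, yielding $\epsilon\log\zd\Rightarrow E$ over finite domains); and the geometric case (3) is handled by running the proof of Proposition~\ref{prop:stationarity2} mutatis mutandis for $e^{G(n,m)}$ in place of $\zd(n,m)$, with the geometric-LPP analogue of Lemma~\ref{lem:symmetry} deduced from the half-space Pfaffian Schur process (the paper points to \cite[Propositions 3.10 and 4.2]{baik2018pfaffian}, exactly the two routes you suggest). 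The only cosmetic difference is that for case (1) the paper simply cites \cite[Lemma 2.2]{betea2020stationary} rather than re-deriving it as the one-row specialisation of the path-routing scheme as you do; both are fine. Your observation that the error-term bookkeeping from Proposition~\ref{prop:stationarity2} simplifies because LPP is a max rather than a sum is correct and is implicit in the paper's ``mutatis mutandis.''
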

\begin{remark}
	In Proposition \ref{prop:stationaryLPP}, the stationary processes from  (1) and (2) are special or limiting cases of those from (3) and (4). Previously, the only known stationary measures for  half-space LPP with  exponential  weights were those in (2) (see \cite[Lemma 2.1 and 2.2]{betea2020stationary}) where the increments of the stationary measure are exponential distributed.
\end{remark}
\begin{proof}
	The exponential cases, parts (2) and (4), are  limits of the results in Section \ref{sec:loggamma}. Indeed, if $\wweight$ is an inverse gamma random variable with parameter $\epsilon a$, then $\epsilon \log \wweight$ weakly converges to an exponential random variable with rate parameter $a$. This implies that, choosing the parameters in Definition as \ref{def:loggammahalfspace}  $\alpha_{\circ}=\epsilon a_{\circ}$, $\alpha_i=\epsilon a_i$, then  $\epsilon \log \zd(n,m)$ weakly converges to $E_{a_{\circ}, \mathbf a}(n,m)$, jointly for all $n,m$ in any finite domain.
	
	The results about the geometric model do not follow from Section \ref{sec:loggamma}. However, part (1) of Proposition \ref{prop:stationaryLPP} follows from the same argument as in \cite[Lemma 2.2]{betea2020stationary}, and part (3) of Proposition \ref{prop:stationaryLPP} follows from the same proof as in Proposition \ref{prop:stationarity2}, replacing $\zd(n,m)$ by $e^{G(n,m)}$ mutatis mutandis. The only part that cannot be straightforwardly adapted is the geometric LPP analogue of Lemma \ref{lem:symmetry}: for any integers  $m\in \Z_{\geq 1}$ and  $k\in \Z_{\geq 0}$,  the joint distribution of
	\begin{equation}
		\left(G_{q_{\circ}, \mathbf q}(m,m),G_{q_{\circ}, \mathbf q}(m+1,m), \dots,G_{q_{\circ}, \mathbf q}(m+k,m) \right)
		\label{eq:jointdistributiongeometric}
	\end{equation}
	is invariant with respect to permutations of $q_1, \dots, q_m$. The distribution of \eqref{eq:jointdistributiongeometric} is related to Pfaffian Schur processes in a way very similar as \eqref{eq:jointdistribution} is related to half-space Macdonald processes (see \cite[Proposition 3.10]{baik2018pfaffian}).
	Hence the symmetry follows from the same reason as in the proof of Lemma \ref{lem:symmetry}. Alternatively, an explicit formula for the distribution of \eqref{eq:jointdistributiongeometric} is given in \cite[Proposition 4.2]{baik2018pfaffian} and one may check that the expression is symmetric.
\end{proof}

\section{Proof of Theorem \ref{theo:invariantKPZintro}}
\label{sec:KPZ}

In this Section, we deduce results about invariant measures for the KPZ equation from the invariant measures of the log-gamma polymer, through a scaling limit. Namely, we prove Theorem \ref{theo:invariantKPZintro} as well as some of the claims in Remark \ref{rem:specialcases}.
This will all rely on the following theorem that proves convergence of the stationary half-space log-gamma partition functions to a specific solution of the stochastic heat equation \eqref{eq:SHEintro}. For each $n\in \Z_{\geq 1}$, we scale the bulk parameter $\alpha=\alpha^{(n)}$ in stationary log-gamma partition functions as  $\alpha^{(n)}=\frac{1}{2}+\sqrt{n}$ and define the scaled partition functions as follows: For all $T,X\in \R_{\geq 0}$ with $\frac{nT}{2},n^{1/2} X\in \Z_{\geq 0}$, we let
\begin{equation}
	e^{\HH^{(n)}_{u,v}(T,X)}:= (\sqrt{n})^{nT +n^{1/2} X} \frac{\Zpolymer_{u,v;\alpha^{(n)}}^{\rm stat}\left(\frac{n T}{2}+n^{1/2}X+2, \frac{n T}{2}+2 \right)}{\wweight_{1,1}\wweight_{2,2}},\label{eq:Znuv}
\end{equation}
where we recall that
$\Zpolymer_{u,v}^{\rm stat}$ was defined in \eqref{eq:defZuvstat}, and we have added a subscript $\alpha^{(n)}$ to make the dependence in $\alpha$ explicit. For all other values of $T,X\in \R_{\geq 0}$ we linearly interpolate $\exp\big(\HH^{(n)}_{u,v}(T,X)\big)$ and then define $\HH^{(n)}_{u,v}(T,X)$ by taking logarithms.

\begin{theorem}
	Fix any $u,v\in \mathbb R$ with $v\leq \min\lbrace u,0\rbrace$.
	Then, for any $T>0$, any $k\in \Z_{\geq 1}$ and  $0\leq X_1< \cdots< X_k$, we have the following convergence in finite dimensional distributions:
	\begin{equation}
		\Big(\HH^{(n)}_{u,v}(T,X_1), \ldots, \HH^{(n)}_{u,v}(T,X_k) \Big)\xRightarrow[n\to\infty]{} \Big(\HH_{u,v}(T,X_1), \ldots, \HH_{u,v}(T,X_k) \Big)
	\end{equation}
	where $\HH_{u,v}(T,X)$ solves \eqref{eq:KPZ} with initial data $\HH_{u,v}(0,X)=\HH_{u,v}(X)$, defined in  \eqref{eq:defhuv}.
	\label{prop:KPZconvergence}
\end{theorem}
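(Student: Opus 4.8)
The plan is to realize $e^{\HH^{(n)}_{u,v}(T,X)}$ as a rescaled half-space log-gamma partition function evolved by \eqref{eq:discreteSHE} from explicit random initial data, and then apply the general intermediate-disorder convergence result (Theorem~\ref{thm:SHEsheetlimit}, established in Section~\ref{sec:generalconvergence}), whose corollary for convergent initial data identifies the limit as the solution of \eqref{eq:SHEintro}. Concretely, using $\zd^{\rm stat}_{u,v}(2,2)=\wweight_{1,1}\wweight_{2,2}$ (as in the proof of Lemma~\ref{prop:specialcase}) together with \eqref{eq:relationtozuv}, the array $\zd^{(n)}(n',m'):=\zd^{\rm stat}_{u,v;\alpha^{(n)}}(n'+2,m'+2)/(\wweight_{1,1}\wweight_{2,2})$ satisfies the recurrence \eqref{eq:discreteSHE} with bulk parameter $\alpha^{(n)}$, boundary parameter $u$, and initial data $(\zd^{(n)}(k,0))_{k\geq 0}\overset{(d)}{=}(\zd_{u,v}(k))_{k\geq 0}$ from \eqref{eq:defzuv}; moreover the weights driving the evolution of $\zd^{(n)}$, namely $\{\wweight_{i,j}:3\leq j\leq i\}$ in the original indexing, are independent of the weights $\{\wweight_{i,j}:1\leq j\leq 2\}$ that determine the initial row. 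Since $e^{\HH^{(n)}_{u,v}(T,X)}=(\sqrt n)^{nT+\sqrt n X}\,\zd^{(n)}\big(\tfrac{nT}{2}+\sqrt n X,\tfrac{nT}{2}\big)$ (for $\tfrac{nT}{2},\sqrt n X$ integers; the general case follows from the linear interpolation in the definition), the statement reduces to: (i) verifying that the log-gamma model under this scaling satisfies the hypotheses of Theorem~\ref{thm:SHEsheetlimit}; and (ii) verifying that the rescaled initial data converges, with the required uniformity, to $e^{\HH_{u,v}(X)}$ from \eqref{eq:defhuv}.

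Step (i) is a matter of matching notation, carried out in Section~\ref{sec:proofconvergence}. The bulk weights are $\Gammainv(2\alpha^{(n)})$ and the diagonal weights $\Gammainv(\alpha^{(n)}+u)$; using $\psi(z)=\log z-\tfrac1{2z}+O(z^{-2})$, $\psi'(z)=\tfrac1z+O(z^{-2})$ and \eqref{eq:loggamma}, one sees that $\log\wweight_{i,j}+\tfrac12\log n$ has mean $O(1/n)$ and variance $\sim\tfrac1{2\sqrt n}$ in the bulk, which is exactly the weak-disorder scaling, while the analogous expansion on the diagonal produces a limiting Robin parameter $\mu=u-\tfrac12$, consistent with the footnote convention relating \eqref{eq:KPZ} and \eqref{eq:SHEintro}. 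The half-space discrete heat-kernel bounds needed here are those of \cite{wu2018intermediate} and the chaos-expansion bookkeeping is as in \cite{alberts2014intermediate}.

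For step (ii), by \eqref{eq:defzuv} we have, for $\sqrt n X\in\Z_{\geq 0}$,
\[
(\sqrt n)^{\sqrt n X}\zd_{u,v}(\sqrt n X)=(\sqrt n)^{\sqrt n X}r_2(\sqrt n X)+\frac{1}{\wweight}\cdot\frac{1}{\sqrt n}\sum_{\ell=1}^{\sqrt n X}\big[(\sqrt n)^{\ell}r_1(\ell)\big]\Big[(\sqrt n)^{\sqrt n X-\ell+1}\tfrac{r_2(\sqrt n X)}{r_2(\ell-1)}\Big],
\]
where $r_1,r_2$ are independent $\Gammainv(\alpha^{(n)}+v)$, $\Gammainv(\alpha^{(n)}-v)$ multiplicative random walks. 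A Donsker argument applies: $\log\big((\sqrt n)^{k}r_1(k)\big)$ and $\log\big((\sqrt n)^{k}r_2(k)\big)$ are random walks whose per-step means are $-v/\sqrt n+O(1/n)$ and $v/\sqrt n+O(1/n)$ and whose per-step variances are $\psi'(\alpha^{(n)}\mp v)\sim 1/\sqrt n$ (again from the expansions above), so under the time change $k=\lfloor\sqrt n S\rfloor$ they converge jointly in $C([0,\infty))$ to independent Brownian motions $B_1,B_2$ with drifts $-v,v$. Feeding this into the continuous functional
\[
(B_1,B_2)\longmapsto e^{B_2(X)}+\frac{1}{\wweight}\int_0^X e^{B_1(S)+B_2(X)-B_2(S)}\,dS,
\]
the bracketed terms converging to $e^{B_1(S)}$ and $e^{B_2(X)-B_2(S)}$ and $\tfrac1{\sqrt n}\sum_{\ell\leq\sqrt n X}$ being a Riemann sum for $\int_0^X$, the continuous mapping theorem yields finite-dimensional convergence (jointly over $X_1<\dots<X_k$) of the rescaled initial data to $e^{\HH_{u,v}(X)}$; when $v=u$ the factor $1/\wweight$ vanishes and $\HH_{u,v}=B_2$ is a Brownian motion with drift $u$, consistent with Remark~\ref{rem:specialcases}. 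The moment formulas \eqref{eq:inversegammamoments}, finite since $\alpha^{(n)}\pm v$ eventually exceed any fixed order, provide the uniform-in-$n$ moment bounds (including negative moments) on the rescaled initial data required by Theorem~\ref{thm:SHEsheetlimit}.

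The heavy lifting sits in Theorem~\ref{thm:SHEsheetlimit} itself; within the present argument the delicate point is the uniformity needed to pass from convergence of the initial data to convergence of the evolved field at time $T$, i.e.\ controlling, uniformly in $n$, the contribution to $\zd^{(n)}\big(\tfrac{nT}{2}+\sqrt n X,\tfrac{nT}{2}\big)$ of the far tail (large $Y$) of the initial data and matching it against the Gaussian decay of the half-space heat kernel so that the limiting Duhamel representation is valid. This is supplied by the convergent-initial-data corollary of Theorem~\ref{thm:SHEsheetlimit} once one provides uniform moment bounds on $e^{\HH^{(n)}_{u,v}}$ and on $(\sqrt n)^{\sqrt n\,\cdot}\zd_{u,v}(\sqrt n\,\cdot)$ with spatial growth strictly controlled by the heat kernel; the hypothesis $v\leq\min\{0,u\}$ — in which $v\leq 0$ can always be arranged via the symmetry $\zd_{u,v}\overset{(d)}{=}\zd_{u,-v}$ of Lemma~\ref{rem:symmetryvtominusv} and $v\leq u$ ensures $\Gammainv(u-v)$ is defined (with $v=u$ the degenerate multiplicative-random-walk case) — is exactly what keeps those bounds in force. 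Combining (i), (ii), and this corollary identifies the limit of $\big(\HH^{(n)}_{u,v}(T,X_1),\dots,\HH^{(n)}_{u,v}(T,X_k)\big)$ with $\big(\HH_{u,v}(T,X_1),\dots,\HH_{u,v}(T,X_k)\big)$ for the solution of \eqref{eq:KPZ} started from $\HH_{u,v}(0,\cdot)=\HH_{u,v}(\cdot)$, completing the proof.
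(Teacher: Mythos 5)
Your proposal takes essentially the same route as the paper: realize $e^{\HH^{(n)}_{u,v}(T,\cdot)}$ as a rescaled half-space polymer partition function started from the stationary initial data, prove convergence of the initial data to $e^{\HH_{u,v}(\cdot)}$ with the required $L^2$ growth bound (this is Lemma~\ref{prop:limitinitialcondition}), verify the moment hypotheses of Theorem~\ref{thm:SHEsheetlimit} for the recentered bulk weights and the inverse-gamma boundary weights, and apply part (3) of that theorem. One small caution: your claim that $\log\wweight_{i,j}+\tfrac12\log n$ has mean $O(1/n)$ is off (the expansion gives an $O(n^{-1/2})$ remainder even after accounting for the $\log 2$ reflecting weight), but since Theorem~\ref{thm:SHEsheetlimit} only requires $\EE[\omega^{(n)}]=0$ exactly — which is built in when one normalizes by $\overline\wweight=\EE[\wweight]$ — this heuristic slip does not affect the argument.
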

Theorem \ref{prop:KPZconvergence} is proved in Section \ref{sec:proofconvergence}, using more general convergence results from Section \ref{sec:generalconvergence}.
The scaling \eqref{eq:Znuv} is such that at $T=0$, the rescaled partition functions converge to the initial data $\HH_{u,v}(X)$.  Indeed, we have the following convergence.
\begin{lemma}\label{prop:limitinitialcondition}
	For any $u,v\in \mathbb R$ with $v\leq \min \lbrace u,0\rbrace$, we have the weak convergence
	\begin{equation}
		\HH^{(n)}_{u,v}(0,X)\xRightarrow[n\to\infty]{} \HH_{u,v}(X),\label{eq:cvinitialuv}
	\end{equation}
	as processes in $X\in \R_{\geq 0}$, where  $\HH_{u,v}$ is defined in \eqref{eq:defhuv}. Additionally, there exists a constant $C=C(u,v)>0$ such that for all $n\in \Z_{\geq 1}$ and $X\in \R_{\geq 0}$,
	\begin{equation}\label{eq:L2tildeZ}
		\EE\left[\left(e^{\HH^{(n)}_{u,v}(0,X)}\right)^2\right] \leq Ce^{CX}.
	\end{equation}
	\label{lemma:limitinitialcondition}
\end{lemma}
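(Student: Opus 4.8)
The plan is to route everything through the explicit description of the initial data. Unwinding \eqref{eq:Znuv} at $T=0$ and using $\zd^{\rm stat}_{u,v}(2,2)=\wweight_{1,1}\wweight_{2,2}$ together with \eqref{eq:relationtozuv}, one sees that $e^{\HH^{(n)}_{u,v}(0,\cdot)}$ has the law of the linear interpolation in $X$ of
$$ X\longmapsto (\sqrt n)^{\,n^{1/2}X}\,\zd_{u,v}\big(n^{1/2}X\big)\qquad\big(n^{1/2}X\in\Z_{\geq 0}\big), $$
where $\zd_{u,v}$ is the process of \eqref{eq:defzuv}, built from independent multiplicative random walks $r_1\sim\Gammainv(\alpha^{(n)}+v)$, $r_2\sim\Gammainv(\alpha^{(n)}-v)$, an independent $\wweight\sim\Gammainv(u-v)$, and $\alpha^{(n)}=\tfrac12+\sqrt n$ (the case $u=v$ is degenerate: $\tfrac1\wweight=0$ and $\zd_{u,v}=r_2$). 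So both claims are scaling statements about this concrete process.

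\textbf{Weak convergence \eqref{eq:cvinitialuv}.} The engine is a functional CLT for the two independent log-random-walks $\log r_1$ and $\log r_2$, which form triangular arrays since the increment law depends on $n$ through $\alpha^{(n)}$. By \eqref{eq:loggamma} and the expansions $\psi\big(\tfrac12+\sqrt n\pm v\big)=\tfrac12\log n\pm\tfrac{v}{\sqrt n}+O(n^{-1})$ and $\psi'\big(\tfrac12+\sqrt n\pm v\big)=n^{-1/2}+O(n^{-1})$ — the shift $\tfrac12$ in $\alpha^{(n)}$ is exactly what cancels the $-\tfrac1{2z}$ term of $\psi$ and makes the limiting drifts equal to $\pm v$ precisely — one gets that $S\mapsto (\sqrt n)^{\lfloor n^{1/2}S\rfloor} r_1(\lfloor n^{1/2}S\rfloor)$ and $S\mapsto (\sqrt n)^{\lfloor n^{1/2}S\rfloor} r_2(\lfloor n^{1/2}S\rfloor)$ converge jointly in $C(\R_{\geq 0})$ to $e^{B_1}$ and $e^{B_2}$, with $B_1,B_2$ independent standard Brownian motions with drifts $-v$ and $v$; the Lindeberg condition holds because the centered increments $\log\xi+\psi(\alpha^{(n)}-v)$ concentrate (variance $O(n^{-1/2})$, higher moments comparably small). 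One then writes $(\sqrt n)^{k}\zd_{u,v}(k)$, $k=\lfloor n^{1/2}X\rfloor$, as a continuous functional of this pair of paths up to a $C$-valued error vanishing uniformly on compacts: the leading term is $(\sqrt n)^k r_2(k)$, and the sum in \eqref{eq:defzuv} is a Riemann sum of mesh $n^{-1/2}$,
\begin{align*}
(\sqrt n)^{k}\,\frac1\wweight\sum_{\ell=1}^{k}r_1(\ell)\,\frac{r_2(k)}{r_2(\ell-1)} &= \frac1{\sqrt n\,\wweight}\sum_{\ell=1}^{k}\Big[(\sqrt n)^{\ell}r_1(\ell)\Big]\Big[(\sqrt n)^{k-\ell+1}\prod_{i=\ell}^{k}\xi_i\Big]\\
&\xrightarrow[n\to\infty]{}\ \frac1\wweight\int_0^X e^{B_1(S)+B_2(X)-B_2(S)}\,dS
\end{align*}
uniformly on compacts. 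Since $(\phi_1,\phi_2)\mapsto\big(x\mapsto e^{\phi_2(x)}+\tfrac1\wweight\int_0^x e^{\phi_1(s)+\phi_2(x)-\phi_2(s)}ds\big)$ is continuous on path space, the continuous mapping theorem gives $(\sqrt n)^k\zd_{u,v}(k)\Rightarrow e^{\HH_{u,v}(X)}$ in $C(\R_{\geq 0})$ (tightness passing through from the Donsker tightness of the walks), and taking logarithms yields \eqref{eq:cvinitialuv}.

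\textbf{The $L^2$ bound \eqref{eq:L2tildeZ}.} I would prove this for $n$ sufficiently large, which is all the application in Section~\ref{sec:proofconvergence} needs (for the finitely many small $n$ with $\alpha^{(n)}-v\le 2$ the left-hand side is in fact $+\infty$ once $X$ is large). Set $W(k):=(\sqrt n)^k\zd_{u,v}(k)$. A short rearrangement of \eqref{eq:defzuv} gives $\zd_{u,v}(k)=\xi_k\big(\zd_{u,v}(k-1)+\tfrac1\wweight r_1(k)\big)$ with $\xi_k:=r_2(k)/r_2(k-1)\sim\Gammainv(\alpha^{(n)}-v)$ independent of everything to its right, whence
$$ W(k)=\sqrt n\,\xi_k\,W(k-1)+\frac{\xi_k}{\sqrt n\,\wweight}\,\tilde R(k),\qquad \tilde R(k):=(\sqrt n)^k r_1(k)=\prod_{i=1}^k\big(\sqrt n\,\zeta_i\big), $$
and $W(0)=1$. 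Introduce $V(k):=\EE[W(k)^2]$ and the auxiliary cross-moment $M(k):=\EE[\wweight^{-1}W(k)\tilde R(k)]$. Using that $\xi_k$ is independent of $(W(k-1),\tilde R(k),\wweight)$ and $\zeta_k$ of $(W(k-1),\tilde R(k-1),\wweight)$, one obtains a closed coupled linear recursion
\begin{align*}
V(k) &= na\,V(k-1)+O(n^{-1})\,M(k-1)+O(n^{-2})\,(nb)^k,\\
M(k) &= \bar a\, n\bar b\,M(k-1)+O(n^{-1})\,(nb)^k,
\end{align*}
with $a=\EE[\xi^2]$, $\bar a=\EE[\xi]$, $b=\EE[\zeta^2]$, $\bar b=\EE[\zeta]$ read off from \eqref{eq:inversegammamoments}. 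With $\alpha^{(n)}=\tfrac12+\sqrt n$ one computes $na=1+O(n^{-1/2})$, $\bar a\, n\bar b=1+O(n^{-1/2})$, $nb=1+O(n^{-1/2})$, so $(nb)^k=e^{O(X)}$ for $k=\lfloor n^{1/2}X\rfloor$; solving the $M$-recursion gives $M(k)=e^{O(X)}$, and then iterating the $V$-recursion (a discrete Gr\"onwall bound) gives $V(k)\le (na)^k+O(n^{-1})\,\lfloor n^{1/2}X\rfloor\, e^{O(X)}\le Ce^{CX}$, uniformly over large $n$, because the $O(n^{-1})$ forcing summed over the $\lfloor n^{1/2}X\rfloor$ iteration steps leaves only $O(n^{-1/2}X)\,e^{O(X)}$.

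\textbf{Main obstacle.} I expect the crux to be precisely this uniformity in $n$. The naive estimate — Cauchy--Schwarz inside the square, or on the cross term in the $W$-recursion with an $\epsilon$-weight — loses a factor of order $n^{1/2}X$: it either inflates the homogeneous coefficient past $na$ or the forcing past $O(n^{-1})$, and then the $\sqrt n$ coming from the $\lfloor n^{1/2}X\rfloor$ iteration steps is no longer compensated. Keeping the exact closed recursion with the auxiliary cross-moment $M(k)$ is what holds the homogeneous coefficient at $na=1+O(n^{-1/2})$ and the forcing at $O(n^{-1})$ at the same time. Everything else — the Donsker/Lindeberg FCLT, the Riemann-sum identification of the limiting functional, the continuous mapping, and the moment asymptotics from \eqref{eq:inversegammamoments} together with the digamma/trigamma expansions — is routine.
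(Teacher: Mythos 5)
Your weak-convergence argument follows the paper's approach closely: digamma/trigamma asymptotics for the log-increments, a functional CLT for the two triangular-array random walks, then the Riemann-sum identification of the exponential functional and continuous mapping (the paper cites \cite[Proposition~1.6]{auffinger2012universality} for this last step, where you argue by hand; same content). For the $L^2$ bound you take a genuinely different route. The paper simply squares the explicit formula \eqref{eq:ztildeuv}, writes $\EE[(e^{\HH^{(n)}_{u,v}(0,X)})^2]$ as a prefactor plus a single and a double sum over $\ell,\ell'$ in the explicit one-step moments $M_j(\pm v)$, and reads off $Ce^{CX}$ because $nM_2(\pm v)$ and $nM_1(v)M_1(-v)$ are all $1+O(n^{-1/2})$ with the $\sqrt n$ and $n$ prefactors exactly compensating the $O(n^{1/2}X)$ summation ranges; you instead iterate a one-step recursion for $V(k)=\EE[W(k)^2]$ coupled with a cross-moment $M(k)$ and close it by discrete Gr\"onwall. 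Both routes are correct. Two remarks. (i) There is a computational slip: from $\zd_{u,v}(k)=\xi_k\big(\zd_{u,v}(k-1)+\wweight^{-1}r_1(k)\big)$ and $W(k)=(\sqrt n)^k\zd_{u,v}(k)$, the $W$-recursion should read $W(k)=\sqrt n\,\xi_k W(k-1)+\frac{\xi_k}{\wweight}\tilde R(k)$ with no spurious $n^{-1/2}$, so the forcing orders in your $V$- and $M$-recursions are $O(n^{-1})(nb)^k$ and $O(n^{-1/2})(nb)^k$ rather than $O(n^{-2})$ and $O(n^{-1})$; the Gr\"onwall conclusion $V(k)\le Ce^{CX}$ survives since $M(k)\le e^{O(X)}$ uniformly, but the intermediate powers should be corrected. (ii) Your ``main obstacle'' paragraph overstates the delicacy: the weighted inequality $(a+b)^2\le(1+\epsilon)a^2+(1+\epsilon^{-1})b^2$ with $\epsilon=n^{-1/2}$ already yields a homogeneous coefficient $1+O(n^{-1/2})$ and a forcing of size $O(n^{-1/2})e^{O(X)}$, whose $n^{1/2}X$-fold sum is $O(X)e^{O(X)}$, so the auxiliary cross-moment is not actually needed to save the argument, and the paper's direct expansion is shorter still. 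Finally, your observation that \eqref{eq:L2tildeZ} can only be read literally for $n$ large enough that $\alpha^{(n)}\pm v>2$ is fair --- this is a tacit assumption in the paper's statement as well --- and only large $n$ is used in Section~\ref{sec:proofconvergence}.
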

\begin{proof}
	Using \eqref{eq:Znuv} and \eqref{eq:relationtozuv}, we have, jointly for all $X$ such that $n^{1/2}X\in \mathbb Z_{\geq 0}$, 
	\begin{equation}\label{eq:ztildeuv}
		e^{\HH^{(n)}_{u,v}(0,X)} \overset{(d)}{=} (\sqrt{n})^{n^{1/2} X} r_2(n^{1/2}X)  + \frac{1}{\wweight \sqrt{n}} \sum_{\ell=1}^{n^{1/2} X} \sqrt{n}^{\ell} r_1(\ell) \times   (\sqrt{n})^{n^{1/2} X+1-\ell} \frac{r_2(n^{1/2}X)}{r_2(\ell-1)},
	\end{equation}
	where we recall that $r_1$ and $r_2$ are independent $\Gammainv(\alpha^{(n)}+v)$ and $\Gammainv(\alpha^{(n)}-v)$ (resp.) multiplicative random walks with $r_1(0)=r_2(0)=1$, and $\wweight\sim \Gammainv(u-v)$ is independent. When $u=v$, we simply assume that the second term in \eqref{eq:ztildeuv} equals zero.  Since  $\alpha^{(n)}=\frac 1 2 + \sqrt{n}$, using \eqref{eq:loggamma} and the asymptotic expansions for the digamma and trigamma functions we see that if $\wweight^{(n)}\sim\Gammainv(\alpha^{(n)}+v)$,
	\begin{align*}
		\mathbb E[\log \wweight^{(n)}] &= -\psi(\alpha^{(n)}+v) =-\log(\sqrt{n}) -\frac{v}{\sqrt{n}} +O\left(\frac{1}{n}\right),\\
		\mathrm{Var}[\log \wweight^{(n)}] &= \psi'(\alpha^{(n)}+v) = \frac{1}{\sqrt{n}}-\frac{v}{n} +O\left( \frac{1}{n^{3/2}} \right).
	\end{align*}
	Donsker's theorem implies that for $B_1$ and $B_2$ independent standard Brownian motions with drift $-v$ and $v$ (resp.) we have the joint convergence as processes (in $0\leq Y\leq X$) of
	\begin{multline*}
		\left((\sqrt{n})^{n^{1/2} X} r_2(n^{1/2}X),\quad
		(\sqrt{n})^{n^{1/2} Y} r_1(n^{1/2}Y),\quad
		(\sqrt{n})^{n^{1/2}X-n^{1/2} Y+1}  \frac{r_2(n^{1/2}X)}{r_2(n^{1/2}Y-1)} \right) \\
		\xRightarrow[n\to\infty]{} \left(e^{B_2(X)}, e^{B_1(Y)}, e^{B_2(X)-B_2(Y)}\right).
	\end{multline*}
	As in \cite[Proposition 1.6]{auffinger2012universality}, this implies that $\HH_{u,v}^n(0,X)$ converges as a process in $X$ to
	$$
	\log \left(e^{B_2(X)} + \frac{1}{\wweight} \int_0^X e^{B_1(Y)+B_2(X)-B_2(Y)} dY \right)=\HH_{u,v}(X).
	$$
	Turning to \eqref{eq:L2tildeZ}, let $M_k(\pm v):=\mathbb E\left[\left(\wweight^{(n)}\right)^k\right]$ where $\wweight^{(n)}\sim\Gammainv(\alpha^{(n)}\pm v)$, and write
	\begin{multline*}
		\EE\left[\left(e^{\HH^{(n)}_{u,v}(0,X)}\right)^2\right]= (nM_2(-v))^{n^{1/2}X}     \\
		+ \frac{\EE[1/\wweight]}{\sqrt{n}} \sum_{\ell=1}^{n^{1/2}X} \frac{1}{\sqrt n M_1(-v)} (nM_1(v)M_1(-v))^{\ell}  (nM_2(-v))^{n^{1/2}X+1-\ell} \\
		+\frac{\EE[1/\wweight^2]}{n} \sum_{\ell,\ell'=1}^{n^{1/2}X}  (n M_2(v))^{\min\lbrace \ell, \ell'\rbrace}  (nM_1(v)M_1(-v))^{\vert \ell-\ell'\vert}(nM_2(-v))^{n^{1/2}X+1-\max\lbrace \ell,\ell'\rbrace}.
	\end{multline*}
	Note that using \eqref{eq:inversegammamoments}, we have $\sqrt n M_1(-v)=1+(v+1/2)n^{-1/2}+O(n^{-1})$,  $nM_2(\pm v) = 1+ (2\mp 2v) n^{-1/2} + O(n^{-1})$  and
	$nM_1(v)M_1(-v)= 1+ n^{-1/2} +O(n^{-1})$, along with the fact that $\EE[1/\wweight]$ and $\EE[1/\wweight^2]$ are constant as $n$ changes. Thus the expansion above is bounded by $Ce^{CX}$ (note that the summations are compensated exactly by the prefactors involving $\sqrt{n}$ and $n$). This proves \eqref{eq:L2tildeZ}.
\end{proof}

\begin{proof}[Proof of Theorem \ref{theo:invariantKPZintro}]
	Fix any $u,v\in\R$ with $v\leq \min\lbrace u,0\rbrace$ and let $\HH_{u,v}(T,Y)$ denote the solution to \eqref{eq:KPZ} with initial data  $\HH_{u,v}(X)$ from \eqref{eq:defhuv}. To prove the theorem we must show that for any $T\in \R_{\geq 0}$, the law of the process $\HH_{u,v}(T,\cdot)-\HH_{u,v}(T,0)$ is the same as that of $\HH_{u,v}(\cdot)$. Since both of these are continuous processes, it suffices to show that in terms  of finite-dimensional distributions, namely that for any $k\in \Z_{\geq 1}$ and $Y_{1},\ldots, Y_{k}\in \R_{\geq 0}$,
	\begin{equation}\label{eq:HHdisteq}
		\left\{\HH_{u,v}(T,Y_j)-\HH_{u,v}(T,0)\right\}_{j\in \{1,\ldots, k\}}\overset{(d)}{=}  \left\{\HH_{u,v}(Y_j)\right\}_{j\in \{1,\ldots, k\}}
	\end{equation}
	Now, observe that as a consequence of Proposition \ref{prop:stationarity2} (or Theorem \ref{theo:loggammaintro}),
	\begin{equation}
		\left\{\HH^{(n)}_{u,v}(T,Y_j)-\HH^{(n)}_{u,v}(T,0)\right\}_{j\in \{1,\ldots, k\}}\overset{(d)}{=}  \left\{\HH^{(n)}_{u,v}(0,Y_j)\right\}_{j\in \{1,\ldots, k\}}
	\end{equation}
	where $\HH^{(n)}_{u,v}(T,Y)$ is defined in \eqref{eq:Znuv}. By Theorem \ref{prop:KPZconvergence} and Lemma \ref{prop:limitinitialcondition}, the sides of the above equality in distribution converge in distribution to the corresponding sides of \eqref{eq:HHdisteq}.\end{proof}

The following two properties of $\HH_{u,v}$ (see Remark \ref{rem:specialcases}) respectively follow from the convergence in Lemma  \ref{lemma:limitinitialcondition} along with the results of  Lemma \ref{prop:specialcase} and  Lemma \ref{rem:symmetryvtominusv} (resp.).
\begin{lemma}
	\label{lemma:Brownianity}
	For $u=-v\geq 0$, $\HH_{u,v}$ is a Brownian motion with drift $u$.
\end{lemma}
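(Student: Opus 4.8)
The plan is to transfer the discrete identity of Lemma \ref{prop:specialcase} through the scaling limit of Lemma \ref{lemma:limitinitialcondition}. The hypothesis $u=-v\geq 0$ is precisely the case $u+v=0$ lying in the admissible range $v\leq\min\{0,u\}$, so Lemma \ref{prop:specialcase} applies with $\alpha=\alpha^{(n)}$ and tells us that $\zd_{u,v}$ is a $\Gammainv(\alpha^{(n)}-u)$ multiplicative random walk. Using \eqref{eq:relationtozuv}, together with \eqref{eq:Znuv} and the identity $\zd^{\rm stat}_{u,v}(2,2)=\wweight_{1,1}\wweight_{2,2}$ (there is a unique path from $(1,1)$ to $(2,2)$ in the octant), this gives, jointly over $X$ with $n^{1/2}X\in\Z_{\geq 0}$,
\[
\HH^{(n)}_{u,v}(0,X)\overset{(d)}{=}\sum_{i=1}^{n^{1/2}X}\Bigl(\log\wweight^{(n)}_i+\log\sqrt{n}\Bigr),\qquad \wweight^{(n)}_i\ \text{i.i.d.}\ \Gammainv(\alpha^{(n)}-u);
\]
that is, $\HH^{(n)}_{u,v}(0,\cdot)$ is a random walk with centered, small increments plus a drift.

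First I would record the increment asymptotics, which are exactly the ones in the proof of Lemma \ref{lemma:limitinitialcondition} with the parameter $v$ there replaced by $-u$ here. Using $\alpha^{(n)}=\tfrac12+\sqrt n$, the formulas \eqref{eq:loggamma}, and the standard expansions of $\psi$ and $\psi'$,
\[
\EE\bigl[\log\wweight^{(n)}_i+\log\sqrt n\bigr]=\frac{u}{\sqrt n}+O\!\Bigl(\tfrac1n\Bigr),\qquad \var\bigl(\log\wweight^{(n)}_i\bigr)=\frac{1}{\sqrt n}+O\!\Bigl(\tfrac1n\Bigr).
\]
Summing $n^{1/2}X$ independent copies, the running mean tends to $uX$ and the running variance to $X$, uniformly for $X$ in compacts; the Lindeberg condition is immediate since each centered summand is $n^{-1/4}$ times a fixed-shape variable of finite variance. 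Donsker's invariance principle, together with the tightness of $\HH^{(n)}_{u,v}(0,\cdot)$ already established in Lemma \ref{lemma:limitinitialcondition}, then yields $\HH^{(n)}_{u,v}(0,\cdot)\Rightarrow B(\cdot)+u\,(\cdot)$ as processes on $\R_{\geq 0}$, with $B$ a standard Brownian motion.

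Finally, Lemma \ref{lemma:limitinitialcondition} also gives $\HH^{(n)}_{u,v}(0,\cdot)\Rightarrow\HH_{u,v}$, so by uniqueness of weak limits $\HH_{u,v}\overset{(d)}{=}B(\cdot)+u\,(\cdot)$, which is exactly a Brownian motion with drift $u$. I do not expect a genuine obstacle here: the single analytic ingredient, the Donsker limit, reuses the digamma/trigamma expansions already carried out for Lemma \ref{lemma:limitinitialcondition}, so the argument amounts to the substitution $v\mapsto -u$ followed by uniqueness of limits; the only point needing a line of care is matching the polygonal interpolation of $e^{\HH^{(n)}_{u,v}}$ in \eqref{eq:Znuv} with the random-walk description above, and this is absorbed by the tightness in Lemma \ref{lemma:limitinitialcondition}.
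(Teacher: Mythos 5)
Your argument is correct and is essentially the same as the paper's: the paper derives Lemma \ref{lemma:Brownianity} by combining the discrete identification in Lemma \ref{prop:specialcase} (that $\zd_{u,v}$ is a $\Gammainv(\alpha-u)$ multiplicative random walk when $u+v=0$) with the convergence $\HH^{(n)}_{u,v}(0,\cdot)\Rightarrow\HH_{u,v}$ from Lemma \ref{lemma:limitinitialcondition}, then invokes uniqueness of weak limits — exactly your route, with the Donsker-limit computation and the moment expansions spelled out in more detail than the paper bothers to record.
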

\begin{lemma}\label{lemma:symmetryHuv}
	If $v>0$ and $u\geq v$, then  $\HH_{u,v}$ and $ \HH_{u,-v}$ have the same distribution.
\end{lemma}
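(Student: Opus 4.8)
The plan is to transport the discrete symmetry of Lemma \ref{rem:symmetryvtominusv} to the continuum through the intermediate–disorder scaling limit that underlies Theorem \ref{prop:KPZconvergence}. The point is that the symmetry $v\mapsto -v$ is exact already at the level of the rescaled discrete initial data $\HH^{(n)}_{u,v}(0,\cdot)$, and it survives passage to the limit because both $\HH^{(n)}_{u,v}(0,\cdot)$ and $\HH^{(n)}_{u,-v}(0,\cdot)$ converge to the corresponding continuum objects.

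First I would fix $v>0$ and $u\geq v$, and note that then $-v<0\leq u$, so $-v\leq\min\{u,0\}$; moreover for all large $n$ the bulk parameter $\alpha^{(n)}=\tfrac12+\sqrt n$ satisfies $\alpha^{(n)}\pm v>0$, so the discrete process $\zd_{u,v}$ of \eqref{eq:defzuv} is well-defined (with the convention $1/\wweight=0$ when $u=v$; recall Remark \ref{rem:discreteprops}). By the identity \eqref{eq:ztildeuv} (equivalently \eqref{eq:relationtozuv} combined with \eqref{eq:Znuv} at $T=0$), the law of $\HH^{(n)}_{u,v}(0,\cdot)$ is a deterministic function of the law of $\zd_{u,v}$; since Lemma \ref{rem:symmetryvtominusv} gives $\zd_{u,v}\overset{(d)}{=}\zd_{u,-v}$ as processes in $k\in\Z_{\geq 0}$, it follows that for every such $n$,
\begin{equation*}
\big\{\HH^{(n)}_{u,v}(0,X)\big\}_{X\geq 0}\ \overset{(d)}{=}\ \big\{\HH^{(n)}_{u,-v}(0,X)\big\}_{X\geq 0}.
\end{equation*}

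Next I would let $n\to\infty$ on both sides. The right-hand side converges to $\HH_{u,-v}(\cdot)$ directly by Lemma \ref{lemma:limitinitialcondition}, since $-v\leq\min\{u,0\}$. For the left-hand side I would check that the proof of Lemma \ref{lemma:limitinitialcondition} applies verbatim for $v>0$ with $u\geq v$: the Donsker argument and the digamma/trigamma expansions use only that $\alpha^{(n)}\pm v\to\infty$, and the boundary weight $\wweight\sim\Gammainv(u-v)$ is legitimate because $u-v\geq 0$ (with $1/\wweight=0$ in the boundary case $u=v$); nowhere is the hypothesis $v\leq 0$ used. Hence $\HH^{(n)}_{u,v}(0,\cdot)\Rightarrow\HH_{u,v}(\cdot)$, where $\HH_{u,v}$ is the process \eqref{eq:defhuv}, which as noted in Remark \ref{rem:specialcases} makes sense for $v>0$ provided $u\geq v$. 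Uniqueness of weak limits then forces $\{\HH_{u,v}(X_i)\}_{i}\overset{(d)}{=}\{\HH_{u,-v}(X_i)\}_{i}$ for every finite tuple $0\leq X_1<\cdots<X_k$, and since $\HH_{u,v}$ and $\HH_{u,-v}$ are continuous processes this upgrades to equality in law.

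The only genuinely non-mechanical step is the last one in the previous paragraph: confirming that the initial–data convergence of Lemma \ref{lemma:limitinitialcondition} really does extend to $v>0$. I expect this to be essentially routine, precisely because the scaling $\alpha^{(n)}\to\infty$ makes all relevant gamma shape parameters large; and the degenerate boundary case $u=v$ can be dispatched separately, since there $\HH_{u,v}$ is Brownian motion with drift $u=v$ while $\HH_{u,-v}$ is Brownian motion with drift $u$ by Lemma \ref{lemma:Brownianity}. As an aside, the whole argument is a discrete-to-continuum transfer of the Whittaker permutation invariance of Lemma \ref{lem:symmetry}; a purely continuum proof via a time-reversal / Matsumoto--Yor type identity for the exponential functionals in \eqref{eq:defhuv} seems possible but less transparent, so I would not pursue it here.
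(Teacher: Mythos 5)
Your proof is correct and follows essentially the same route as the paper: the paper states that Lemma \ref{lemma:symmetryHuv} "follows from the convergence in Lemma \ref{lemma:limitinitialcondition} along with the results of \ldots\ Lemma \ref{rem:symmetryvtominusv}," and you have simply filled in the details of that transfer, including the worthwhile observation that the Donsker argument in Lemma \ref{lemma:limitinitialcondition} extends verbatim to $v>0$.
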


\section{Convergence of half-space polymer partition functions to the half-space SHE}
\label{sec:generalconvergence}

We will formulate and prove a general result about intermediate disorder scaling limits for half-space polymer models. This generalizes previous works of \cite{wu2018intermediate} (which only studied point to point partition functions) \cite{parekh2019positive} (which dealt with Dirichlet type boundary conditions instead of Neumann type) and the full-space work of \cite{alberts2014intermediate}.

\subsection{Notation and definitions}\label{sec:notanddefs}

We use  the convention of uppercase letters for continuum variables and lower case letters for discrete variable. For two topological spaces $E$ and $F$ let $C(E,F)$ denote the space of continuous functions from $E$ to $F$.
As a special case, let $\Cfive$ denote the space of all real-valued continuous functions $(\mu,\beta,S,X,T,Y)\mapsto f_{\mu,\beta}(S,X;T,Y)$ defined in the domain $\Dfive\subset \R^6$ defined by $\mu,\beta,S,T\in \R$ with $S<T$ and $X,Y\in \R_{\geq 0}$. Equip $\Cfive$ with the topology of uniform convergence on compact sets (this means that the two time variables $S$ and $T$ will always maintain a positive distance for such compact sets). Let $(C_i)_{i\in \Z_{\geq 1}}$ denote a sequence of growing  compact sets whose union equals $D$ and let
$$d_{C_i}(f,g)= \sup_{(\mu,\beta,S,X,T,Y)\in C_i} |f_{\mu,\beta}(S,X;T,Y)-g_{\mu,\beta}(S,X;T,Y)|.$$
Then the  uniform convergence on compact sets topology is metrizable with the metric
\begin{equation}\label{eq:metric}
	d(f,g):= \sum_{i=1}^{\infty} 2^{-i} \max\lbrace  d_{C_i}(f,g),1\rbrace.
\end{equation}
We denote weak convergence of a sequence $f^{(n)}$ of random functions in $\Cfive$ to a limit (in distribution) $f$ by $f^{(n)}\Rightarrow f$ and we will refer below to this notion of convergence as ``convergence as a process in $\Cfive$''. We can similarly define process convergence and a metric for the uniform convergence on compact sets topology when we are dealing with fewer variables, e.g. when $\mu$ and $\beta$ are fixed, or when $(T,Y)$ are the only variables. We use dots in place of variables to denote the variables which are allowed to vary in our notion of process convergence, e.g. if $\mu$ is fixed we would write $f^{(n)}(\mu,\cdot,\cdot,\cdot,\cdot,\cdot)\Rightarrow f(\mu,\cdot,\cdot,\cdot,\cdot,\cdot)$ for the convergence of the process in the remaining five variables. We will also allow some of the variables to sit as subscripts, e.g. $\ZZ^{(n)}_{\mu,\beta}(S,X;T,Y)$.

\begin{definition}\label{def:SHEproper}
	For $u\in \R$, a random process $\HH_u(T,Y)\in C(\R_{\geq 0}\times \R_{\geq 0},\R)$ is said to be the Hopf-Cole solution to \eqref{eq:KPZ}, the half-line KPZ equation with Neumann boundary parameter $u$, with (potentially random) initial data $\HH(0,X)\in C(\R_{\geq 0},\R)$ if $\HH_u(T,Y)=\log \ZZ_{u-1/2,1}(T,Y)$, where $\ZZ_{\mu,\beta}(T,Y)\in C(\R_{\geq 0}\times \R_{\geq 0},\R_{> 0})$ solves \eqref{eq:SHEbeta}, the half-line stochastic heat equation with Robin boundary condition $\mu\in \R$ and noise strength $\beta\in \R$.
	\begin{align}
		\label{eq:SHEbeta}
		\tag{SHE$_{\mu,\beta}$}
		\begin{split}
			\partial_T \ZZ_{\mu,\beta}(T,X) &= \frac{1}{2} \partial^2_{X}\ZZ_{\mu,\beta}(T,X) + \ZZ_{\mu,\beta}(T,X) \beta \xi(T,X), \\
			\partial_X\ZZ_{\mu,\beta}(T,X)\big\vert_{X=0} &=\mu \ZZ_{\mu,\beta}(T,0),
		\end{split}
	\end{align}
	with initial data given by  $\ZZ_{\mu,\beta}(0,X)=e^{ \HH(0,X)}$ for $X\in \R_{\geq 0}$, where $\HH(0,X)$ is initial data for \eqref{eq:KPZ}.
	When $\beta=1$ we have that \eqref{eq:SHEbeta} matches with \eqref{eq:SHEintro}.
	Solving \eqref{eq:SHEbeta} means that $\ZZ_{\mu,\beta}$ satisfies the mild form of the SHE: For all $T\in \R_{>0}$ and $Y\in \R_{\geq 0}$, $\ZZ_{\mu,\beta}(T,\cdot)$ is adapted to the natural filtration generated by $\ZZ_{0}(0,\cdot)$ and $\xi$ on the time interval $[0,T]$, and almost-surely satisfies
	\begin{equation}
		\ZZ_{\mu,\beta}(T,Y) = \int_{\R_{\geq 0}} \PC_{\mu}(0,X;T,Y) \ZZ_{0}(X)dX   + \int_{0}^T \int_{\R_{\geq 0}} \PC_{\mu}(S,X;T,Y)\ZZ_{\mu,\beta}(S,X)\, \beta\, \xi(dS,dX)
		\label{eq:mild}
	\end{equation}
	where the stochastic integral is in It\^{o} sense and $\PC_\mu(S,X;T,Y)$ is the Robin heat kernel on the half-line with parameter $\mu$, i.e., the unique solution to
	\begin{align*}
		\partial_T \PC_\mu(S,X;T,Y) &= \tfrac{1}{2} \partial^2_{Y}\PC_\mu(S,X;T,Y),\\
		\partial_Y \PC_\mu(S,X;T,Y)\big\vert_{Y=0} &= \mu \PC_\mu(S,X;T,0), \;\; \forall S<T\in\R_{>0}, \forall X\in \R_{\geq 0},\\
		\lim_{T\to S}\PC_\mu(S,X;T,Y) &= \delta(X-Y) \text{ in a weak sense on }L^2(\mathbb R_{\geq 0}).
	\end{align*}
	For use below, for any $k\in \Z_{\geq 1}$ and $S,T\in \R$ with $S<T$ define
	$$
	\Delta_k(S,T) = \big\{\vec{R}=(R_1,\ldots,R_k)\in \R^k_{>0}:S< R_1<\cdots<R_k<T\big\}
	$$
	and for $\vec{R}\in \Delta_k(S,T)$ and $\vec{W}\in \R_{\geq 0}^k$ define the $k$-point transition probability
	\begin{equation}\label{eq:PCmulti}
		\PC_{\mu}(S,X;\vec{R},\vec{W};T,Y):= \prod_{j=1}^{k+1} \PC_{\mu}(R_{j-1},W_{j-1};R_j,W_j)
	\end{equation}
	with the convention that $R_0=S, R_{k+1}=T, W_0=X$ and $W_{k+1}=Y$. Finally, write
	$$
	\xi(d\vec{R},d\vec{W}):= \prod_{j=1}^{k} \xi(dR_j,dW_j).
	$$
\end{definition}

\begin{proposition}
	Suppose there exists an $a>0$ such that $\ZZ_{0}$ satisfies
	\begin{equation}\label{eq:initdatabdd}
		\sup_{X\in \R_{\geq 0}} e^{-aX} \EE[\ZZ_{0}(X)^2] <\infty.
	\end{equation}
	Then there exists a unique mild solution $\ZZ_{\mu,\beta}(T,Y)$ to the SHE  \eqref{eq:mild} in the class of adapted (to the initial data and white noise natural filtration) processes satisfying for all $T_0\in \R_{>0}$
	\begin{equation}\label{eq:SHEL2bdd}
		\sup_{Y\in \R_{\geq 0}, T\in [0,T_0]} e^{-aY} \EE[\ZZ_{\mu,\beta}(T,Y)^2] <\infty.
	\end{equation}
	Assuming $\ZZ_{0}(\cdot)$ is almost surely positive, so is $\ZZ_{\mu,\beta}(\cdot,\cdot)$. Finally, $\ZZ_{\mu,\beta}$ admits a {\it chaos series}
	\begin{equation}\label{eq:SHEchaos}
		\ZZ_{\mu,\beta}(T,Y) =\sum_{k=0}^{\infty} \beta^k \int\limits_{\Delta_k(0,T)}\int\limits_{\R_{\geq 0}^{k+1}} \ZZ_{0}(X_0)dX_0 \cdot \PC_{\mu}(0,X_0;\vec{R},\vec{W};T,Y)\cdot\xi(d\vec{R},d\vec{W}),
	\end{equation}
	where the term $k=0$ is the same as the first term in the right-hand side of \eqref{eq:mild}. 
\end{proposition}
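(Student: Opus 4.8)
The plan is a standard Picard iteration in a weighted $L^2$ space, driven by Gaussian-type bounds on the Robin heat kernel $\PC_\mu$, followed by an appeal to known positivity results for the half-line SHE. First I would record the estimates on $\PC_\mu$ that everything rests on. Writing $\PC_\mu$ through the method of images, $\PC_\mu(S,X;T,Y)=p_{T-S}(X-Y)+p_{T-S}(X+Y)-2\mu\int_0^\infty e^{-\mu w}p_{T-S}(X+Y+w)\,dw$ (with $p_t$ the standard heat kernel on $\R$, and with the additional bound-state contribution when $\mu<0$ treated as in \cite{wu2018intermediate}), one obtains, for a constant $C=C(\mu,a)$: the pointwise bound $\PC_\mu(S,X;T,Y)\le C(T-S)^{-1/2}e^{C(T-S)}$ and the weighted $L^2$-in-space bound $\int_{\R_{\ge 0}}e^{aX}\PC_\mu(S,X;T,Y)^2\,dX\le C(T-S)^{-1/2}e^{C(T-S)}e^{aY}$. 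I would cite these from \cite{wu2018intermediate} (see also \cite{parekh2019positive}) rather than re-derive them.

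\emph{Picard iteration, existence, and the moment bound.} Next I would set $\ZZ^{(0)}_{\mu,\beta}(T,Y):=\int_{\R_{\ge 0}}\PC_\mu(0,X;T,Y)\ZZ_{0}(X)\,dX$ and, recursively, $\ZZ^{(j+1)}_{\mu,\beta}(T,Y):=\ZZ^{(0)}_{\mu,\beta}(T,Y)+\beta\int_0^T\!\!\int_{\R_{\ge 0}}\PC_\mu(S,X;T,Y)\ZZ^{(j)}_{\mu,\beta}(S,X)\,\xi(dS,dX)$. The increment $\ZZ^{(j)}_{\mu,\beta}-\ZZ^{(j-1)}_{\mu,\beta}$ is a $j$-fold iterated It\^o integral, so by the It\^o isometry, \eqref{eq:initdatabdd}, Cauchy--Schwarz in the innermost $X$-variable, and repeated application of the weighted kernel bound above, one finds $\EE\big[(\ZZ^{(j)}_{\mu,\beta}(T,Y)-\ZZ^{(j-1)}_{\mu,\beta}(T,Y))^2\big]\le C^{j+1}\beta^{2j}e^{CT}e^{aY}\int_{\Delta_j(0,T)}R_1^{-1/2}\prod_{i=2}^{j+1}(R_i-R_{i-1})^{-1/2}\,d\vec{R}$, and the Dirichlet-type integral equals $\pi^{(j+1)/2}T^{(j-1)/2}/\Gamma((j+1)/2)$, which is super-exponentially small in $j$. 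Hence $\sum_j\big\|\ZZ^{(j)}_{\mu,\beta}-\ZZ^{(j-1)}_{\mu,\beta}\big\|_{T_0}<\infty$ for the norm $\|f\|_{T_0}:=\sup_{T\le T_0,\,Y\ge 0}e^{-aY/2}\EE[f(T,Y)^2]^{1/2}$, so the iterates form a Cauchy sequence of adapted processes; the limit $\ZZ_{\mu,\beta}$ is adapted, satisfies \eqref{eq:SHEL2bdd} (all the bounds are nondecreasing in $T$), and, passing to the limit in the recursion, solves the mild equation \eqref{eq:mild}.

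\emph{Uniqueness and chaos series.} If $\ZZ$ and $\widetilde{\ZZ}$ are two solutions of \eqref{eq:mild} lying in the class \eqref{eq:SHEL2bdd}, their difference $D$ satisfies $D(T,Y)=\beta\int_0^T\!\!\int_{\R_{\ge 0}}\PC_\mu(S,X;T,Y)D(S,X)\,\xi(dS,dX)$; iterating this identity $j$ times and invoking the same estimates yields $\|D\|_{T_0}^2\le C^j\beta^{2j}T_0^{(j-1)/2}\pi^{(j+1)/2}/\Gamma((j+1)/2)\cdot\|D\|_{T_0}^2$, forcing $\|D\|_{T_0}=0$ as $j\to\infty$. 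For the chaos series I would unfold the Picard recursion: using the semigroup identity behind \eqref{eq:PCmulti}, the iterate $\ZZ^{(j)}_{\mu,\beta}$ is exactly the $j$-th partial sum of the right-hand side of \eqref{eq:SHEchaos}, and the moment bound above is precisely the statement that this series converges in $L^2$; its limit is $\ZZ_{\mu,\beta}$, giving \eqref{eq:SHEchaos}.

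\emph{The main obstacle: positivity.} The one point that does not come out of the $L^2$ machinery is the almost-sure positivity of $\ZZ_{\mu,\beta}$ for a.s.-positive $\ZZ_{0}$, since a.s. positivity is not preserved under the $L^2$ limits used above; I expect this to be the real obstacle. The cleanest route is to invoke the strict positivity theorem for the half-line stochastic heat equation with Robin boundary condition from \cite{parekh2019positive}. Alternatively, one can replace $\xi$ by a spatially mollified noise $\xi_\delta$, for which the corresponding random linear PDE has a Feynman--Kac representation over reflected Brownian motion (with a boundary local-time term encoding $\mu$) that is manifestly positive, and then send $\delta\to0$ while controlling the lower tail in the spirit of Mueller's positivity argument to see that strict positivity persists in the limit. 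Once positivity is established, all the assertions of the proposition follow.
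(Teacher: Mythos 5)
Your overall approach is correct and, in substance, it re-derives what the paper handles by citation: the paper's proof is essentially two sentences long, pointing to \cite[Proposition 4.2]{parekh2017kpz} (which uses exactly the Walsh-style Picard iteration you sketch) for existence, uniqueness, the $L^2$ bound, and positivity, and then checking that the chaos series satisfies the mild equation so that uniqueness identifies it with the solution. Your argument for the chaos series is slightly different in spirit---you obtain it as the $L^2$ limit of the Picard iterates rather than verifying it is a mild solution and appealing to uniqueness---but both routes are standard and valid; the iteration route you take is arguably a bit more self-contained and makes the $\Gamma((j+1)/2)$ decay that underlies both steps manifest.

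The one place where your write-up misses the mark is the positivity citation. You say that ``the cleanest route is to invoke the strict positivity theorem for the half-line stochastic heat equation with Robin boundary condition from \cite{parekh2019positive},'' but \cite{parekh2019positive} treats the half-line SHE with \emph{Dirichlet}-type boundary conditions, not Robin; the paper even points this out explicitly when discussing related work. The reference that proves positivity in the Robin setting---and the one the paper actually uses for the whole proposition---is \cite[Proposition 4.2]{parekh2017kpz}. Your alternative route (mollify, Feynman--Kac over reflected Brownian motion with a boundary local-time factor encoding $\mu$, then pass to the limit using a Mueller-type lower-tail argument) is sound in principle and is in fact close to how positivity is established in the cited sources, but as stated your primary citation does not support the Robin case. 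With that reference corrected (or the mollification argument carried out in full), the proposal is complete.
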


\begin{proof}
	Except for the chaos series, \cite[Proposition 4.2]{parekh2017kpz} proves this, using a now classical argument that dates back to \cite{walsh1986introduction}. It is easily shown by the type of arguments in the proof of \cite[Proposition 4.2]{parekh2017kpz} that the chaos series satisfy the mild form of the SHE \eqref{eq:mild}  and \eqref{eq:SHEL2bdd}, provided \eqref{eq:initdatabdd}. By uniqueness, this implies \eqref{eq:SHEchaos}.
\end{proof}

We will also need to work with the four-parameter time-space SHE sheet $\ZZ_{\mu,\beta}(S,X;T,Y)$.

\begin{proposition}
	\label{prop:sheet} For any $\mu,\beta,S,T\in \R$ with $S<T$ and $X,Y\in \R_{\geq 0}$ (i.e. \break $(\mu,\beta, S,X,T,Y)\in \Dfive$), there exists a unique mild solution
	\begin{equation}
		\ZZ_{\mu,\beta}(S,X;T,Y) = \PC_{\mu}(S,X;T,Y)   + \int_{S}^T \int_{\R_{\geq 0}}\PC_{\mu}(R,W;T,Y)\ZZ_{\mu,\beta}(S,X;R,W)\beta \xi(dR,dW)
		\label{eq:milddelta}
	\end{equation}
	to the SHE with $\delta$ initial data started at time $S$ and location $X$ in the class of adapted (to the white noise natural filtration on the time interval $[S,T]$) processes satisfying for all $T_0\in\R_{>S}$
	\begin{equation}\label{eq:SHEL2bdddelta}
		\sup_{Y\in \R_{\geq 0}, T\in (S,T_0]} (T-S)\cdot \EE[\ZZ_{\mu,\beta}(S,X;T,Y)^2] <\infty.
	\end{equation}
	For all $(\mu,\beta, S,X,T,Y)\in \Dfive$, the $\ZZ_{\mu,\beta}(S,X;T,Y)$ can be defined on a common probability space as
	\begin{equation}\label{eq:SHEchaosdelta}
		\ZZ_{\mu,\beta}(S,X;T,Y) =\sum_{k=0}^{\infty}\beta^k \int\limits_{\Delta_k(S,T)}\int\limits_{\R_{\geq 0}^{k}}\PC_{\mu}(S,X;\vec{R},\vec{W};T,Y) \xi(d\vec{R},d\vec{W}).
	\end{equation}
	The family of solutions $\ZZ_{\mu,\beta}(S,X;T,Y)$ satisfies the following composition law:
	\begin{equation}\label{eq:comp}
		\ZZ_{\mu,\beta}(S,X;T,Y) = \int_{\R_{\geq 0}}\ZZ_{\mu,\beta}(S,X;R,W)\ZZ_{\mu,\beta}(R,W;T,Y)dW \qquad \textrm{for any }R\in (S,T).
	\end{equation}
	For any $\ZZ_{0}$ satisfying \eqref{eq:initdatabdd} the solution to the SHE in \eqref{eq:SHEchaos} satisfies the convolution formula:
	\begin{equation}\label{eq:conv}
		\ZZ_{\mu,\beta}(T,Y) = \int_{\R_{\geq 0}}\ZZ_{\mu,\beta}(0,X;T,Y)\ZZ_{0}(X)dX.
	\end{equation}
	Finally, the random function $(\mu,\beta,S,X,T,Y)\mapsto\ZZ_{\mu,\beta}(S,X;T,Y)$ almost surely takes values in $\Cfive$ and moreover the chaos series \eqref{eq:SHEchaosdelta} provides a monotone coupling in the $\mu$ variable whereby for any $\mu^\downarrow \leq \mu \leq \mu^\uparrow$, almost surely $\ZZ_{\mu^\downarrow,\beta}(\cdot,\cdot;\cdot,\cdot)\geq \ZZ_{\mu,\beta}(\cdot,\cdot;\cdot,\cdot)\geq \ZZ_{\mu^\uparrow,\beta}(\cdot,\cdot;\cdot,\cdot)$.
\end{proposition}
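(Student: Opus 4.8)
The plan is to build $\ZZ_{\mu,\beta}(S,X;T,Y)$ directly through its chaos series \eqref{eq:SHEchaosdelta} on a single probability space carrying the white noise $\xi$, and then verify that this object has all the asserted properties. Everything reduces to a handful of facts about the Robin heat kernel $\PC_\mu$: the Chapman--Kolmogorov identity $\int_{\R_{\geq 0}}\PC_\mu(S,X;R,W)\PC_\mu(R,W;T,Y)\,dW=\PC_\mu(S,X;T,Y)$; a Gaussian-type upper bound $\PC_\mu(S,X;T,Y)\le C(T-S)^{-1/2}e^{-(X-Y)^2/C(T-S)}$ together with an explicitly controlled correction when $\mu<0$ (an extra bound-state term, bounded on compact time intervals), available from the estimates of \cite{wu2018intermediate}; quantitative joint Hölder continuity of $(\mu,S,X,T,Y)\mapsto\PC_\mu(S,X;T,Y)$ on regions where $T-S$ is bounded below; and pointwise monotonicity of $\PC_\mu$ in $\mu$ (decreasing). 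Given these, the construction follows the standard Walsh/It\^{o} template used in \cite{alberts2014intermediate,parekh2017kpz} (see also \cite{CSZ}), adapted to the half-line.

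\textbf{Existence, uniqueness, $L^2$ bound, chaos series.} The $k$-th term of \eqref{eq:SHEchaosdelta} is a multiple Wiener integral, so by It\^{o} isometry its second moment equals $\beta^{2k}\int_{\Delta_k(S,T)}\int_{\R_{\geq 0}^k}\PC_\mu(S,X;\vec R,\vec W;T,Y)^2\,d\vec W\,d\vec R$. Inserting the heat-kernel upper bound and carrying out the spatial convolutions, this is bounded up to constants by $(T-S)^{-1}\cdot(C\beta^2)^k(T-S)^{k/2}/\Gamma(k/2+1)$, where the factorial decay comes from integrating the square-root singularities of the one-step kernel over the time simplex and the $(T-S)^{-1}$ prefactor is the $k=0$ (pure heat kernel) contribution. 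This sum is finite and, after multiplication by the weight $(T-S)$, locally bounded: this simultaneously gives $L^2$-convergence of the series, the bound \eqref{eq:SHEL2bdddelta}, and --- after checking that the partial sums are the Picard iterates of \eqref{eq:milddelta} --- that the limit is a mild solution. Uniqueness in the class \eqref{eq:SHEL2bdddelta} follows by applying It\^{o} isometry to the difference of two solutions and running a singular Gronwall inequality with kernel $(T-R)^{-1/2}(R-S)^{-1/2}$, which forces the difference to vanish.

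\textbf{Composition and convolution.} Both \eqref{eq:comp} and \eqref{eq:conv} are read off from the chaos series. For \eqref{eq:comp}, split the time simplex $\Delta_k(S,T)$ according to the index $j$ with $R_j<R<R_{j+1}$, insert a $dW$-integration at time $R$, and factor $\PC_\mu(S,X;\vec R,\vec W;T,Y)$ across $R$ using Chapman--Kolmogorov; since $\xi(d\vec R,d\vec W)$ factorizes over the two time halves $(S,R)$ and $(R,T)$ and the corresponding Wiener integrals are independent, re-summing in $k$ yields exactly the right-hand side of \eqref{eq:comp}, the exchange of sums and integrals being justified by the $L^2$ bounds above and Fubini. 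Equation \eqref{eq:conv} follows analogously, integrating \eqref{eq:SHEchaosdelta} against $\ZZ_0$ and matching term by term with \eqref{eq:SHEchaos}, using \eqref{eq:initdatabdd} to control the $X_0$-integral.

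\textbf{Values in $\Cfive$ and the monotone coupling --- the main obstacle.} To produce a version with values in $\Cfive$, I would apply Kolmogorov's continuity criterion on each compact $C_i$ (on which $T-S$ stays bounded below): since $\ZZ_{\mu,\beta}$ lies in a fixed-order sum of Wiener chaoses, Gaussian hypercontractivity bounds all moments of its increments by powers of the corresponding $L^2$ norms, and the $L^2$ norm of an increment in any of the six variables reduces via It\^{o} isometry to an $L^2$ difference of products of $\PC_\mu$'s, which the quantitative continuity of $\PC_\mu$ controls by a positive power of the increment, uniformly on $C_i$. Continuity in $\mu$ is the least routine input; it is obtained from the explicit structure of $\PC_\mu$ (reflection term plus, for $\mu<0$, a bound-state term, each continuous in $\mu$), and handling $\mu<0$ with constants uniform over a compact $\mu$-range is where the kernel estimates require the most care. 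For the monotone coupling, I would first mollify the noise in space, $\xi\to\xi_\epsilon$: the solution $\ZZ^\epsilon_{\mu,\beta}(S,X;T,Y)$ then admits a genuine Feynman--Kac representation as an average of a positive multiplicative functional of a reflected Brownian path weighted by $e^{-\mu\ell}$, with $\ell$ the boundary local time; since $e^{-\mu\ell}$ is pointwise decreasing in $\mu$, $\ZZ^\epsilon_{\mu,\beta}$ is monotone decreasing in $\mu$ for every realization of $\xi_\epsilon$. Letting $\epsilon\to0$, the mollified chaos series converges termwise in $L^2$ to \eqref{eq:SHEchaosdelta} on the common white-noise space, so the a.s.\ inequalities pass to the limit and yield $\ZZ_{\mu^\downarrow,\beta}\ge\ZZ_{\mu,\beta}\ge\ZZ_{\mu^\uparrow,\beta}$ a.s. The central difficulty throughout is securing heat-kernel bounds and a continuity modulus that are uniform over the relevant compact ranges of $(\mu,\beta)$ --- in particular across $\mu=0$ and into $\mu<0$ --- so that both Kolmogorov's criterion and the mollification limit go through; this is precisely where the estimates of \cite{wu2018intermediate} are used.
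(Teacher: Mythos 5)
Your overall construction — building $\ZZ_{\mu,\beta}(S,X;T,Y)$ through the chaos series on the white-noise space, verifying \eqref{eq:SHEL2bdddelta} and the mild formulation via It\^{o} isometry and heat-kernel estimates, reading off the composition and convolution laws from the chaos-series algebra, and obtaining joint continuity in $(\mu,\beta,S,X,T,Y)$ from Kolmogorov's criterion plus hypercontractivity — is the same route the paper takes, which simply cites \cite[Prop.~4.3]{parekh2017kpz}, Walsh-type arguments, and \cite[Prop.~2.9]{corwin2021kpz} for the details. You correctly identify the one nontrivial point, namely that the monotone coupling does \emph{not} hold term-by-term in the chaos series, and here your proposal diverges from the paper's proof.

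The paper obtains the monotone coupling by a different mechanism: it invokes Theorem~\ref{thm:SHEsheetlimit}~(1) with deterministic boundary weights $\X^{(n),\mu}(r)\equiv 1-\mu n^{-1/2}$, together with the elementary monotonicity of the discrete polymer partition function in the boundary weights recorded in \eqref{eq:mono}. Since the six-parameter process $\ZZ^{(n)}_{\cdot,\cdot}(\cdot,\cdot;\cdot,\cdot)$ converges in distribution in $\Cfive$ to the chaos-series sheet, and the event that $\mu\mapsto\ZZ_{\mu,\beta}(\cdot,\cdot;\cdot,\cdot)$ is pointwise decreasing is closed, the almost-sure ordering of the prelimit carries over. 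The paper also flags (and resolves) the apparent circularity: the monotone coupling is used inside the proof of Theorem~\ref{thm:SHEsheetlimit} only for part (3), random boundary weights, whereas part (1) suffices here.

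Your alternative — mollify the noise in space, use the reflected-path Feynman--Kac representation with boundary weight $e^{-\mu\ell}$, observe that this is decreasing in $\mu$ pathwise, and pass to the limit — is a legitimate and perhaps more ``intrinsic'' route, but it requires more infrastructure than you acknowledge. First, the Feynman--Kac functional $\EE_X\big[\exp\big(\beta\int_S^T\xi_\epsilon(r,B_r)\,dr\big) e^{-\mu\ell_{[S,T]}}\,\delta_Y(B_T)\big]$ has diverging second moment as $\epsilon\to0$; one must subtract the Wick renormalization constant (equivalently tilt by $e^{-\tfrac12\beta^2\|\phi_\epsilon\|_{L^2}^2(T-S)}$). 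Since that constant is $\mu$-independent it does not disturb monotonicity, but it must appear for the limit to be \eqref{eq:SHEchaosdelta}. Second, the $L^2$ (or in-probability) convergence of the renormalized mollified solution to the It\^{o} chaos solution on the common white-noise space, \emph{uniformly enough} in $\mu$ over compacts including $\mu<0$, is itself a theorem needing proof; it is standard on the full line, but on the half-line with Robin data it uses the same boundary heat-kernel estimates you mention and is not in the cited literature in the form you need. The paper's route sidesteps all of this because Theorem~\ref{thm:SHEsheetlimit} is already established, so for the purposes of this paper the discrete-approximation argument is shorter; your approach would be preferable if one wanted a statement not relying on the half-space polymer machinery at all.
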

\begin{proof}
	The existence and uniqueness is given in \cite[Proposition 4.3]{parekh2017kpz}. As before, it is easy to show  (e.g. by the type of arguments in \cite{walsh1986introduction} or in the proof of \cite[Proposition 4.3]{parekh2017kpz}) that the chaos series \eqref{eq:SHEchaosdelta} satisfy the mild form of the SHE \eqref{eq:milddelta}  and \eqref{eq:SHEL2bdddelta}. By uniqueness, this implies \eqref{eq:SHEchaosdelta} and this gives the desired coupling of all solutions  $\ZZ_{\mu,\beta}(S,X;T,Y)$ on the common probability space supported by the white noise $\xi$. That $\ZZ_{\mu,\beta}(S,X;T,Y)$ is continuous in all six parameters follows easily from estimating moments as each variable is changed (which is achieved with estimates on the heat kernels  $\PC_{\mu}(S,X;T,Y)$) and then applying the Kolmogorov continuity criteria. The convolution law \eqref{eq:conv} and composition law \eqref{eq:comp} both follow easily from performing the desired integrations to the chaos series and matching them with the chaos series for the desired outcome, see for instance the proof of \cite[Proposition 2.9]{corwin2021kpz} for details. The monotonicity is a bit trickier to see since it does not hold term-by-term in the chaos series. Instead, for instance, it can be seen as a consequence of our convergence result, Theorem \ref{thm:SHEsheetlimit} in the case of deterministic $\X^{(n)}_r\equiv 1-\mu n^{-1/2}$  along with the monotonicity of the polymer partition function observed in \eqref{eq:mono}. It should be noted that this monotonicity result is used in the proof of Theorem \ref{thm:SHEsheetlimit}, though only in the case where the boundary parameters $\X^{(n)}_r$ are random (thus the logic is not circular).
\end{proof}

\subsection{Convergence of the discrete partition function to the SHE}\label{sec:discreteconv}
For $s\in \Z$ and $x\in \Z_{\geq 0}$ let $\PP_{R}^{s,x}$ denote the probability measure on simple symmetric random walks started at time $s$ and position $x$, and reflected at the origin. Equivalently, this measure is that of the absolute value of a simple symmetric random walk started at time $s$ and position $x$. For $t\in \Z$ with $t>s$, the probability this measure assigns to a path $\SSS=(\SSS_s,\ldots,\SSS_{t})$ between times $s$ and $t$ is given by
$\PP_{R}^{s,x}(\SSS)= \prod_{r=s}^{t-1} 2^{-\mathbf{1}_{\SSS_r>0}}$ provided that $\SSS_s=x$ and $(\SSS_s,\ldots,\SSS_{t})$ forms a valid random walk path that never goes negative (otherwise the right-hand side is zero). Note that the factor $r=t$ is intentionally left off. We define a discrete transition probability
$$
p(s,x;t,y):=\PP_{R}^{s,x}(\SSS_t=y)= \sum_{\SSS:\SSS_t=y}\PP_{R}^{s,x}(\SSS),
$$
noting that this is zero unless $s+x=t+y\textrm{ mod }2$. As such, we will always assume below that the sum of the time and space variables are even and restrict ourselves to this sublattice.

We will deform this by introducing boundary weights. Let $\boldsymbol{\X}=\big(\X(i)\big)_{i\in \Z}$ be a collection of i.i.d. non-negative random variables that we call the {\it boundary weights}. With these boundary weights we define a random measure (generally not a probability measure so the use of $\PP$ is a bit of an abuse of notation) on paths $\SSS= (\SSS_s,\ldots, \SSS_t)$ between times $s$ and $t$ by setting
$$
\PP_{\boldsymbol{\X}}^{s,x}(\SSS) := \left(\prod_{s\leq i<t: \SSS_i=0} \X(i)\right)\cdot   \PP_{R}^{s,x}(\SSS).
$$
Define the associated random transition kernel (usually not a probability mass function) as
\begin{equation}\label{eq:pxsxty}
	p_{\boldsymbol{\X}}(s,x;t,y) := \PP_{\boldsymbol{\X}}^{s,x}(\SSS_t=y) = \sum_{\SSS:\SSS_t=y} \PP_{\boldsymbol{\X}}^{s,x}(\SSS).
\end{equation}
In the special case where $\X(i)\equiv \gamma\geq 0$ is deterministic, we write $p_{\gamma}(s,x;t,y)$.

We will define the modified polymer partition function by introducing {\it bulk weights}. For an inverse temperature $\beta>0$, let $\boldsymbol{\omega} = \big(\omega(s,x)\big)_{s\in \Z, x\in \Z_{> 0}}$ be a collection of random variables such that $1+\beta \omega(s,x)$ is non-negative
and  $\omega(s,x)$ are i.i.d. for $s\in \Z_{\geq 0}$ and $x\in \Z_{>0}$ and $\omega(s,0)\equiv 0$. For $s,t\in \Z$ with $s<t$ and $x,y\in \Z_{\geq 0}$, the {\it modified polymer partition function} $z_{\boldsymbol{\X},\boldsymbol{\omega}}(s,x;t,y)$ is a function of the boundary and bulk weights defined by
\begin{equation}\label{eq:zprod}
	z_{\boldsymbol{\X},\boldsymbol{\omega},\beta}(s,x;t,y) = \EE_{\boldsymbol{\X}}^{s,x}\left[\prod_{r=s}^{t-1} \big(1+ \beta \omega(r,\SSS_r)\big) \mathbf{1}_{\SSS_t=y}\right] = \sum_{\SSS:\SSS_t=y} \prod_{r=s}^{t-1} \big(1+\beta \omega(r,\SSS_r)\big) \PP_{\boldsymbol{\X}}^{s,x}(\SSS).
\end{equation}

By expanding the products, we can rewrite this as a discrete analog of the chaos series
\begin{equation}\label{eq:zseries}
	z_{\boldsymbol{\X},\boldsymbol{\omega},\beta}(s,x;t,y)  = \sum_{k=0}^{t-s} \beta^k \sum_{\vec{r}\in D_{k}(s,t)}\sum_{\vec{w}\in \Z_{\geq 0}^k}
	p_{\boldsymbol{\X}}(s,x;\vec{r},\vec{w};t,y) \omega(\vec{r},\vec{w})
\end{equation}
where
\begin{equation}\label{eq:Dk}
	D_{k}(s,t) := \{\vec{r}=(r_1,\ldots ,r_k)\in \Z^k: s\leq r_1<\cdots <r_k<t\}
\end{equation}
and for $\vec{r}\in D_{k}(s,t)$ and $\vec{w}=(w_1,\ldots, w_k)$,
\begin{equation}\label{eq:zseries2}
	p_{\boldsymbol{\X}}(s,x;\vec{r},\vec{w};t,y):= \prod_{j=1}^{k+1} p_{\boldsymbol{\X}}(r_{j-1},w_{j-1};r_{j},w_{j}), \qquad  \omega(\vec{r},\vec{w}):= \prod_{j=1}^{k} \omega(r_j,w_j),
\end{equation}
with the convention that $r_0=s, r_{k+1}=t, w_0=x$ and $w_{k+1}=y$.
In order for \break $p_{\boldsymbol{\X}}(s,x;\vec{r},\vec{w};t,y)\neq 0$, we must have the same even parity for all $r_j+w_j$ as $j$ varies, an event we denote by $\vec{r}\leftrightarrow \vec{w}$.

Another consequence of \eqref{eq:zseries}  is the discrete version of the mild formulation of the SHE:
\begin{equation}\label{eq:zdiscmild}
	z_{\boldsymbol{\X},\boldsymbol{\omega},\beta}(s,x;t,y)  =p_{\boldsymbol{\X}}(s,x;t,y) + \sum_{r=s}^{t-1} \sum_{w\in \Z_{\geq 0}} p_{\boldsymbol{\X}}(r,w;t,y)  \beta \omega(r,w)z_{\boldsymbol{\X}, \boldsymbol{\omega},\beta}(s,x;r,w).
\end{equation}
Clearly from \eqref{eq:zprod}, $z_{\boldsymbol{\X},\boldsymbol{\omega},\beta}(s,x;t,y)$ satisfies a composition law whereby for any $r\in [s,t]$,
$$
z_{\boldsymbol{\X},\boldsymbol{\omega},\beta}(s,x;t,y) = \sum_{w\in \Z_{\geq 0}} z_{\boldsymbol{\X},\boldsymbol{\omega},\beta}(s,x;r,w)z_{\boldsymbol{\X},\boldsymbol{\omega},\beta}(r,w;t,y)
$$
Furthermore, $z_{\boldsymbol{\X},\boldsymbol{\omega}}(s,x;t,y)$ allows us to construct partition functions  with initial data.

\begin{figure}
	\centering
	\scalebox{0.54}{\includegraphics{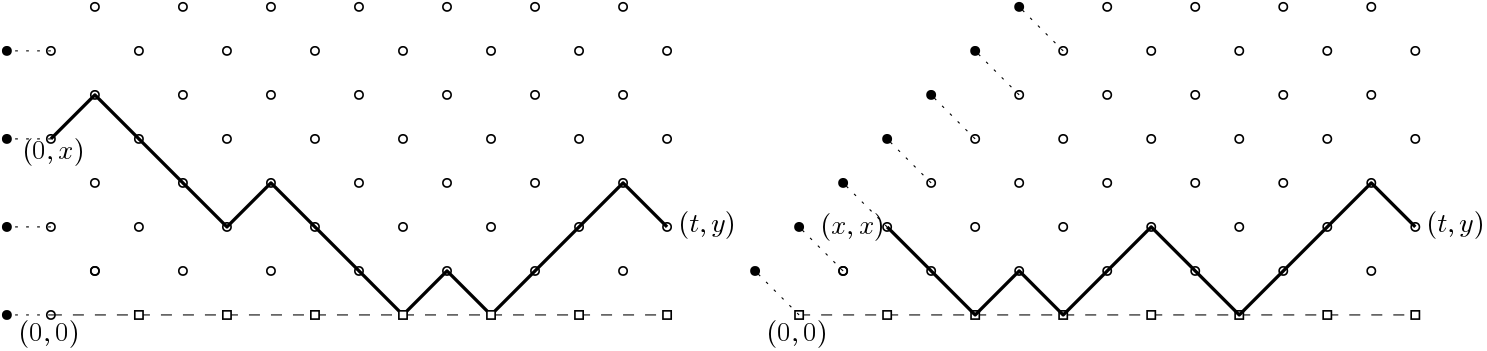}}
	\caption{Left: The partition function from \eqref{eq:ztinyvert} is depicted. The $\bullet$ at locations $(-1,x)$ represent the initial data, while the $\circ$ represent the bulk weights and the $\Box$ represent the boundary weights. Paths can start at any location $(0,x)$ for $x\in 2\Z_{\geq 0}$ and their weight is the product of the weights along the path (except for at the endpoint $(t,y)$) as well as at $(-1,x)$. The partition function is the sum over all such paths and over all such starting points. Right: The partition function  from \eqref{eq:ztinydiag} is depicted. Now the initial data is represented by the $\bullet$ at locations $(x-1,x+1)$. Paths can start at any location $(x,x)$ for $x\in \Z_{\geq 0}$ and  their weight is the product of the weights along the path (except for at the endpoint $(t,y)$) as well as at $(x-1,x+1)$.}\label{fig:Zinitialdata}
\end{figure}

We will focus on two types of polymers with initial data (see Figure \ref{fig:Zinitialdata}). In the first case, we will specify initial data at time $0$. Since we have assumed that the parity of the time plus space coordinate is even, we need only have initial data specified at even sites. In particular, assume that $\big(z^{\tinyvert }(x)\big)_{x\in 2\Z_{\geq 0}}$ is a given set of potentially random non-negative real numbers (the vertical line symbolizes that this initial data is specified along a ray in this direction in the time-space plane). Then, for $t\in \Z_{\geq  0}$ define
\begin{equation}\label{eq:ztinyvert}
	z^{\tinyvert}_{\boldsymbol{\X},\boldsymbol{\omega},\beta,z^{\tinyvert}}(t,y) = \sum_{x\in 2\Z_{\geq 0}} z^{\tinyvert}(x) z_{\boldsymbol{\X},\boldsymbol{\omega},\beta}(0,x;t,y).
\end{equation}
This is an infinite sum of non-negative numbers and hence converges (potentially to infinity). We will later specify conditions under which this will be almost surely finite.

In the second case, we will specify initial data  along the time-space diagonal line $s=x$. We will see in Section \ref{sec:proofconvergence} why it is useful to consider such type of initial data, to study the large scale limit of models such as the one defined in Section \ref{sec:loggamma}. In this case, we assume that $\big(z^{\tinydiagup }(x)\big)_{x\in \Z_{\geq 0}}$ is a given set of potentially random non-negative real numbers (the diagonal line symbolizes that this initial data is specified along a ray in this direction in the time-space plane). Then, for $t,y\in \Z_{\geq 0}$ with $t\geq y$ define
\begin{equation}\label{eq:ztinydiag}
	z^{\tinydiagup}_{\boldsymbol{\X},\boldsymbol{\omega},\beta,z^{\tinydiagup}}(t,y) = \sum_{x\in \Z_{\geq 0}} z^{\tinydiagup}(x) z_{\boldsymbol{\X},\boldsymbol{\omega},\beta}(x,x;t,y).
\end{equation}
This is an infinite sum of non-negative numbers and hence converges (potentially to infinity). We will later specify conditions under which this will be almost surely finite.

The modified partition function enjoys various forms of monotonicity with respect to changing the weights. In particular, if we introduce (on the same probability space on which $\boldsymbol{\X}$ and $\boldsymbol{\omega}$ are defined) two other sets of boundary weights $\boldsymbol{\X}^{\downarrow} = (X^{\downarrow}_i)_{i\in \Z}$ and $\boldsymbol{\X}^{\uparrow} = (X^{\uparrow}_i)_{i\in \Z}$ coupled such that ,
\begin{equation}\label{eq:mono}
	\textrm{for all } i\in\Z,\quad \X^{\downarrow}_i \leq \X_i\leq \X^{\uparrow}_i, \quad\textrm{then} \quad z_{\boldsymbol{\X}^{\downarrow},\boldsymbol{\omega},\beta}\leq z_{\boldsymbol{\X},\boldsymbol{\omega},\beta}\leq z_{\boldsymbol{\X}^{\uparrow},\boldsymbol{\omega},\beta}
\end{equation}
as functions in the four parameters.

\begin{theorem}\label{thm:SHEsheetlimit}
	For $\mu\in \R$ and  $n\in \Z_{\geq 1}$ define deterministic boundary weights $\boldsymbol{\X}^{(n),\mu}=\big(\X^{(n),\mu}(r)\big)_{r\in \Z}$ with $\X^{(n),\mu}(r)\equiv 1-n^{-1/2} \mu $ and assume that  we are given bulk weights $\boldsymbol{\omega}^{(n)} = \big(\omega^{(n)}(s,x)\big)_{s\in \Z, x\in \Z_{\geq 0}}$ such that for all $s\in \Z$ and $x\in \Z_{>0}$,
	\begin{equation}\label{eq:thmassumption2}
		\EE\big[\omega^{(n)}(s,x)\big]=0, \;\;\lim_{n\to\infty}\EE\big[\big(\omega^{(n)}(s,x)\big)^2\big]=1, \textrm{ and } \sup_{n\in \Z_{\geq 1}}\EE\big[\big(\omega^{(n)}(s,x)\big)^8\big]<\infty.
	\end{equation}
	Finally, let $\beta^{(n)}=n^{-1/4} \beta/\sqrt{2}$ for $\beta\in \R$.
	
	\begin{enumerate}%
		\item[(1)] {\bf Sheet convergence.} For each $n\in \Z_{\geq 1}$ and all $\mu,\beta,S,T\in \R$ with $S<T$ and $X,Y\in \R_{\geq 0}$ (i.e. $(\mu,\beta, S,X,T,Y)\in \Dfive$) such that $nS,nT\in \Z$, $n^{1/2} X , n^{1/2} Y\in \Z_{\geq 0}$ and $nS+n^{1/2} X$ and $nT+n^{1/2} Y$ are both even, define
		\begin{equation}\label{eq:ZZnmuSXTY}
			\ZZ^{(n)}_{\mu, \beta}(S,X;T,Y) := \frac{n^{1/2}}{2} z_{\boldsymbol{\X}^{(n),\mu},\mathbf{\boldsymbol{\omega}^{(n)},\beta^{(n)}}}(nS,n^{1/2} X;nT,n^{1/2} Y) \cdot 2^{\mathbf{1}_{Y=0}}.
		\end{equation}
		For all other values of $S,X,T$ and $Y$, define $\ZZ^{(n)}_{\mu,\beta}(S,X;T,Y)$ by linear interpolation from the above defined values. Then, $\ZZ^{(n)}_{\cdot,\cdot}(\cdot,\cdot;\cdot,\cdot)$ converges as a process in $\Cfive$ to $\ZZ_{\cdot,\cdot}(\cdot,\cdot;\cdot,\cdot)$, from \eqref{eq:SHEchaosdelta}.
		
		\item[(2)] {\bf Convergence of partition functions with initial data.}
		Fix any $\mu,\beta\in \R$. For each $n\in \Z_{\geq 1}$, given $\big(z^{\tinyvert (n)}(x)\big)_{x\in 2\Z_{\geq 0}}$, define $\ZZ^{(n)}(X) = z^{\tinyvert (n)}(n^{1/2} X)$ for $X\in \R$ with $n^{1/2} X\in 2\Z_{\geq 0}$ and extend to other $X$ by linear interpolation. Assume that there exists a (potentially random) function $\ZZ\in C(\R_{\geq 0},\R)$ such that $\ZZ^{(n)}(\cdot)\Rightarrow \ZZ(\cdot)$ as a process, and that for some $a>0$
		\begin{equation}\label{eq:expgrowthinitial}
			\sup_{n\in \Z_{\geq 1}}\sup_{X\in \R_{\geq 0}} e^{-aX} \EE[\ZZ^{(n)}(X)^2] <\infty.
		\end{equation}
		For all $n\in \Z_{\geq 1}$, and $T,Y\in \R_{\geq 0}$  such that $nT\in \Z$, $n^{1/2} Y\in \Z_{\geq 0}$ and $nT+n^{1/2} Y$ is even, let
		\begin{equation}\label{eq:ZZnvert}
			\ZZ^{(n)}_{\mu,\beta}(T,Y) =z^{\tinyvert}_{\boldsymbol{\X}^{(n),\mu},\boldsymbol{\omega}^{(n)},\beta^{(n)},z^{\tinyvert (n)}}(nT,n^{1/2} Y) \cdot 2^{\mathbf{1}_{Y=0}}
		\end{equation}
		where $z^{\tinyvert}_{\boldsymbol{\X}^{(n)},\boldsymbol{\omega}^{(n)},\beta^{(n)},z^{\tinyvert (n)}}$ is defined in \eqref{eq:ztinyvert}.
		For all other values of $T$ and $Y$, define $\ZZ^{(n)}_{\mu,\beta}(T,Y)$ by linear interpolation. Then, $\ZZ^{(n)}_{\mu,\beta}(\cdot,\cdot)\Rightarrow \ZZ_{\mu,\beta}(\cdot,\cdot)$ in the sense of finite dimensional distributions, where $\ZZ_{\mu,\beta}$ is defined in \eqref{eq:SHEchaos} with initial data $\ZZ_0$ specified by the above assumption. Moreover, for any $k\in\Z_{\geq 1}$ and sequence $\big((T^{(n)}_j,Y^{(n)}_j)\big)_{j\in \{1,\ldots, k\}}$ which converges as $n\to \infty$ to $\big((T_j,Y_j)\big)_{j\in \{1,\ldots, k\}}$, the sequence
		$$\big(\ZZ^{(n)}_{\mu,\beta}(T^{(n)}_j,X^{(n)}_j)\big)_{j\in \{1,\ldots, k\}}\Rightarrow \big(\ZZ_{\mu,\beta}(T_j,X_j)\big)_{j\in \{1,\ldots, k\}}.$$
		
		Similarly, given $\big(z^{\tinydiagup (n)}(x)\big)_{x\in \Z_{\geq 0}}$, define $\ZZ^{(n)}(X) = z^{\tinydiagup (n)}(n^{1/2} X)$ for $X\in \R$ with $Xn^{1/2}\in \Z_{\geq 0}$ and extend to other $X$ by linear interpolation. Under the exact same assumptions as above and with $z^{\tinydiagup}$ from \eqref{eq:ztinydiag} in place of $z^{\tinyvert}$, i.e. setting
		\begin{equation}\label{eq:ZZndiag}
			\ZZ^{(n)}_{\mu,\beta}(T,Y) =z^{\tinydiagup}_{\boldsymbol{\X}^{(n),\mu},\boldsymbol{\omega}^{(n)},\beta^{(n)},z^{\tinydiagup (n)}}(nT,n^{1/2} Y) \cdot 2^{\mathbf{1}_{Y=0}}
		\end{equation}
		we have that $\ZZ^{(n)}_{\mu,\beta}(\cdot,\cdot)\Rightarrow 2 \ZZ_{\mu,\beta}(\cdot,\cdot)$ in the sense of finite dimensional distributions.
		\item[(3)] \textbf{Random boundary weights.}
		Fix any $\mu\in \R$. If we replace the assumption of deterministic boundary weights  $\X^{(n),\mu}(r)\equiv 1-n^{-1/2} \mu$ by i.i.d. random boundary weights satisfying
		\begin{align}
			\begin{split}\label{eq:thmassumption1}
				&\lim_{n\to \infty} \EE\big[n^{1/2}\big(1-\X^{(n),\mu}(r)\big)\big]= \mu \quad \textrm{and}\\
				&\exists \epsilon\in (0,1]\textrm{ and }K>0 \textrm{ such that }\forall n\in \Z_{\geq 1}, \var(\X^{(n),\mu}) \leq K n^{-\epsilon},
			\end{split}
		\end{align}
		then the convergence of the sheet in part (1) and the convergence of the partition functions with initial data in part (2) continues to hold in the sense of finite dimensional distributions (with $(\beta,S,X,T,Y)$ varying in part (1) and $(T,Y)$ varying in part (2)).
	\end{enumerate}
\end{theorem}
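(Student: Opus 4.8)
The plan is to follow the discrete-chaos method of \cite{alberts2014intermediate}, adapted to the half-line with a Robin boundary condition using the heat-kernel estimates of \cite{wu2018intermediate} (and in the spirit of \cite{parekh2019positive} at the boundary). The discrete modified partition function and the target SHE sheet are both already presented as chaos series, in \eqref{eq:zseries} and \eqref{eq:SHEchaosdelta}, so the core task is to match them term by term and control the tails uniformly in $n$. The crucial input is a heat-kernel statement: the rescaled deformed kernel $\tfrac{n^{1/2}}{2}\,p_{\boldsymbol{\X}^{(n),\mu}}(nS,n^{1/2}X;nT,n^{1/2}Y)\cdot 2^{\mathbf{1}_{Y=0}}$ converges, uniformly on compact subsets of $\Dfive$ (so with $T-S$ bounded away from $0$) and locally uniformly in $\mu$, to the Robin heat kernel $\PC_\mu(S,X;T,Y)$, and obeys a uniform-in-$n$ Gaussian-type upper bound, with at most an exponential-in-$\mu^2(T-S)$ correction coming from the attractive boundary layer when $\mu<0$. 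For $\mu=0$ this is the reflected-walk estimate of \cite{wu2018intermediate}; for general $\mu$ one expands $p_{\boldsymbol{\X}^{(n),\mu}}$ in the number of visits of the reflected walk to the origin (each carrying the factor $1-n^{-1/2}\mu$), an expansion that converges to the classical Duhamel series expressing $\PC_\mu$ through $\PC_0$.

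\textbf{Chaos convergence and process-level tightness.} Using orthogonality of the chaos, the $L^2$-norm of the $k$-th discrete term of \eqref{eq:zseries} is a $k$-fold sum of squared kernel products; the kernel convergence above together with the moment hypotheses \eqref{eq:thmassumption2} on $\boldsymbol{\omega}^{(n)}$ makes it converge to the $L^2$-norm of the $k$-th term of \eqref{eq:SHEchaosdelta}, the choice $\beta^{(n)}=n^{-1/4}\beta/\sqrt 2$ exactly cancelling the powers of $n$ against the $n^{1/2}/2$ per kernel. A standard semigroup bound --- $\sum_{\vec w}(\tfrac{n^{1/2}}{2}p)^2\leq C^k(T-S)^{k/2}/\Gamma(k/2+1)$ times a heat kernel --- gives a bound summable in $k$ uniformly in $n$, so term-by-term convergence upgrades to $L^2$-convergence of the whole series, hence to convergence of all finite-dimensional distributions. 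To obtain convergence as a process in $\Cfive$ one applies Kolmogorov's criterion, bounding a high moment $\EE[|\ZZ^{(n)}_{\mu,\beta}(S,X;T,Y)-\ZZ^{(n)}_{\mu',\beta'}(S',X';T',Y')|^{2p}]$ by a large power of the Euclidean distance between the two parameter tuples, uniformly in $n$: the $(S,X,T,Y)$-regularity comes from Hölder estimates on the kernels, the $\beta$-dependence is polynomial, and the $\mu$-dependence is read off from differentiating the visit-number expansion (this is where the eighth-moment assumption is spent). This proves part (1).

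\textbf{Initial data and random boundary weights.} For part (2) with vertical initial data one writes $\ZZ^{(n)}_{\mu,\beta}(T,Y)$ as the discrete convolution $\sum_x \ZZ^{(n)}(X)\,\ZZ^{(n)}_{\mu,\beta}(0,X;T,Y)$ and passes to the limit using the convolution identity \eqref{eq:conv} and the sheet convergence just proved; the interchange of limit and sum is justified because the Gaussian decay in $X$ of $\EE[\ZZ^{(n)}_{\mu,\beta}(0,X;T,Y)^2]$ (from the first paragraph) dominates the at-most-exponential growth \eqref{eq:expgrowthinitial} of the initial data, giving the uniform integrability needed. The diagonal case is identical, except that one also uses that the starting time of $\ZZ^{(n)}_{\mu,\beta}(x/n,x/n^{1/2};T,Y)$ tends to $0$, and that the starting sum runs over all integers rather than the even sublattice, which doubles the effective lattice density of starting points and hence produces the factor $2$ in the limit. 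For part (3) one expands $z_{\boldsymbol{\X}^{(n),\mu},\boldsymbol{\omega}^{(n)},\beta^{(n)}}$ jointly in the bulk noise and in the centred boundary fluctuations $\tilde{\X}(i):=\X^{(n),\mu}(i)/\EE[\X^{(n),\mu}(i)]-1$: the deterministic factors $\prod_{\mathrm{visits}}\EE[\X^{(n),\mu}]$ produce the Robin kernel with parameter $\mu$ by the first half of \eqref{eq:thmassumption1}, while any term containing at least one $\tilde\X$ carries a factor $\var(\X^{(n),\mu})\leq Kn^{-\epsilon}$ times a sum over boundary-visit times of a product of kernels whose spatial coordinate is pinned to $0$; because this pinning prevents any combinatorial blow-up, such terms are $o(1)$. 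Summing these negligible contributions over $k$ shows the boundary-fluctuation chaos vanishes, so the limit agrees with the deterministic-weight limit of parts (1)--(2); the continuum monotonicity in $\mu$ of Proposition \ref{prop:sheet} --- itself obtained from part (1) and the discrete monotonicity \eqref{eq:mono} --- is available to squeeze the limit between the deterministic-weight limits with parameters $\mu\pm\delta$.

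\textbf{Main obstacle.} The difficult step is the heat-kernel input of the first paragraph: one needs bounds on $p_{\boldsymbol{\X}^{(n),\mu}}$ that are simultaneously uniform in $n$, strong enough (a Gaussian upper bound plus convergence, including the boundary layer when $\mu<0$), and uniform over compact ranges of $\mu$, since every later step feeds off them. The perturbative expansion around the reflected-walk estimates of \cite{wu2018intermediate} is the natural route, but it must be executed carefully: for $\mu<0$ the perturbation series has positive terms, so one must show the resulting bound does not deteriorate as $n\to\infty$.
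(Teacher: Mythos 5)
Your overall strategy matches the paper's: chaos expansion and term-by-term comparison using the discrete $U$-statistic machinery of \cite{alberts2014intermediate} combined with the half-space Robin kernel estimates of \cite{wu2018intermediate}, Kolmogorov for tightness, the Skorohod/convolution argument for initial data (with the factor $2$ in the diagonal case correctly traced to the doubled lattice density), and an $L^2$ comparison of random versus deterministic boundaries for part (3).

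Two places in the sketch are substantively thinner than what is actually needed. First, in part (3) you assert that any chaos term containing a centred boundary fluctuation $\tilde\X$ is $o(1)$ because ``pinning prevents any combinatorial blow-up.'' This is precisely the nontrivial point: the number of boundary visits of a reflected walk over a time window of length $n(T-S)$ is of order $\sqrt{n}$, so the prefactor $\var(\X^{(n),\mu})\lesssim n^{-\epsilon}$ is competing against a growing count of visit times, and one needs a genuine estimate on the variance of the randomly-deformed kernel. The paper resolves this via a lemma (its Lemma on $\var^{\rm bdry}(\hat\PC^{(n)})$, imported from \cite[Lemma 5.3]{wu2018intermediate}) that controls this variance by $\big((1+c^{(n)})^{k+1}-1\big)$ times the square of the deterministic kernel, which in turn rests on an analysis of coincidence local times of pairs of reflected walks --- this is not something that follows from the pinning remark alone and would need to be supplied. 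Second, your attribution of the eighth-moment hypothesis to the $\mu$-direction tightness is off: the paper handles the $\mu$-modulus of continuity by a direct second-moment computation expressing the $\mu$-increment through the local time at the origin (yielding an $L^2$ bound of order $|\mu-\nu|^2$) followed by H\"older interpolation to a $p>2$ moment; the eighth-moment assumption is instead spent on the spatial and temporal modulus-of-continuity estimates inherited from \cite[App.~A]{alberts2014intermediate}, \cite[Prop.~5.9]{parekh2019positive}, and the end of \cite[Sec.~6]{wu2018intermediate}. Finally, a minor difference of route: you propose deriving the general-$\mu$ kernel estimates from the $\mu=0$ case via a Duhamel/visit-number expansion, whereas the paper simply cites the general-$\mu$ statement of \cite[Lemma 6.2]{wu2018intermediate} directly (including a correction for a missing factor of $2$ at $Y=0$); your route is plausible but would require additional work, especially for $\mu<0$ where, as you note, the perturbative terms are positive and must be shown not to degrade the Gaussian bound uniformly in $n$.
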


With additional work, the finite dimensional distribution convergence in parts (2) and (3) could possibly be improved to process-level convergence. As we do not use this, we have not pursued it.

\subsection{Proof of Theorem \ref{thm:SHEsheetlimit}}
We address the three parts in sequence.

\medskip
\noindent {\bf Part (1).} There are two parts to the sheet convergence. First we prove convergence in the sense of finite-dimensional distributions and then we extend it to process-level convergence by demonstrating tightness. To do this, we first recall two key lemmas, both of which can be found in \cite{alberts2014intermediate} in the full-space setting (though the case of $\R_{\geq 0}$ follows verbatim).

Fix $S<T$ such that  $s=nS, t=nT \in \mathbb Z$. Define disjoint (recall $D_{k}(s,t)$ from \eqref{eq:Dk})
\begin{multline*}
	R^{(n)}_k := \left\{ R^{(n)}(\vec{r},\vec{w})=\big[n^{-1} \vec{r},n^{-1} (\vec{r}+\vec{1})\big)\right. \\ \left. \times \big[n^{-1/2} \vec{w},n^{-1/2} (\vec{w}+\vec{2})\big):\vec{r}\in D_{k}(s,t), \vec{w}\in \Z_{\geq 0}^{k}, \vec{r}\leftrightarrow \vec{w}\right\}
\end{multline*}
where the notation $\big[n^{-1} \vec{r},n^{-1} (\vec{r}+\vec{1})\big) = \big[n^{-1}r_1,n^{-1} (r_1+1)\big)\times \cdots\times\big[n^{-1}r_k,n^{-1} (r_k+1)\big)$, similarly for $\big[n^{-1/2} \vec{w},n^{-1/2} (\vec{w}+\vec{2})\big)$. We recall that $\vec{r}\leftrightarrow \vec{w}$ means that all $r_j+w_j$ are even integers, so that the rectangles are disjoint. We will want to work with functions in
$$
L^2(S,T,k):=L^2\big([S,T]^k\times \R_{\geq 0}^k)
$$
For a function $g\in L^2(\Delta_k(S,T)\times\mathbb R_{\geq 0}^k)$, we denote by $\bar{g}^{(n)}\in L^2(S,T,k)$ the function which is constant on each rectangle $R\in R^{(n)}_k$, taking value there in equal to the average value of $g$ on the rectangle, i.e.
$\bar{g}^{(n)}|_{R} = |R|^{-1} \int_R g$, and taking value $0$ outside of the rectangles. Note that $|R|=2^k n^{-3k/2}$ for each rectangle in $R\in R^{(n)}_k$.
Define the discrete $U$-statistic associated with a function $g$ and random variables $\boldsymbol{\omega}$ by
$$
S^{(n)}_k(g;\boldsymbol{\omega}):= 2^{k/2} \sum_{\vec{r}\in E_k(s,t)} \sum_{\vec{w}\in \Z_{\geq 0}^k} \bar{g}^{(n)}(n^{-1}\vec{r},n^{-1/2} \vec{w}) \omega(\vec{r},\vec{w}) \mathbf{1}_{\vec{r}\leftrightarrow \vec{w}}
$$
where $E_{k}(s,t) := \{\vec{r}=(r_1,\ldots ,r_k)\in \Z^k: r_i\neq r_j \textrm{ for } i\neq j\}$. Notice that the set $E_{k}(s,t)$ and $D_{k}(s,t)$ differ in that $E$ does not restrict the order of the $\vec{r}$ coordinates, while $D$ does.
\begin{lemma}\label{lem:akq41}
	For a given $\boldsymbol{\omega}$, the map $g\mapsto S^{(n)}_k(g,\boldsymbol{\omega})$ is linear.
	For any $g\in L^2(S,T,k)$, provided that $\EE\big[\omega(r,w)\big]=0$ and $\var\big(\omega(r,w)\big)\leq \sigma^2$ for all $r\in \Z$ and $w\in \Z_{\geq 0}$,
	$$
	\EE[\big(S^{(n)}_k(g)\big)^2] \leq \sigma^{2k} n^{3k/2} ||g||^2_{L^2(S,T,k)}.
	$$
\end{lemma}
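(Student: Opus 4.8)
The plan is to treat the two assertions separately; both are routine manipulations of the $U$-statistic, of the type that appears in \cite{alberts2014intermediate}, and the only care needed is in tracking the powers of $2$ and $n$.

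\textbf{Linearity.} This is immediate: the map $g\mapsto\bar{g}^{(n)}$ is linear, as it replaces $g$ by its averages over the rectangles $R\in R^{(n)}_k$ and by $0$ elsewhere; and by definition $S^{(n)}_k(g;\boldsymbol{\omega})$ is, for fixed $\boldsymbol{\omega}$, a fixed linear combination of the values of $\bar{g}^{(n)}$ at the points $(n^{-1}\vec{r},n^{-1/2}\vec{w})$, with coefficients $2^{k/2}\omega(\vec{r},\vec{w})\mathbf{1}_{\vec{r}\leftrightarrow\vec{w}}$.

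\textbf{Second moment bound.} First I would observe that $\bar{g}^{(n)}$ vanishes outside the rectangles $R^{(n)}_k$, which are indexed by $\vec{r}\in D_k(s,t)$; together with the indicator $\mathbf{1}_{\vec{r}\leftrightarrow\vec{w}}$ this means the only summands in $S^{(n)}_k(g;\boldsymbol{\omega})$ that can be nonzero have $\vec{r}\in D_k(s,t)$ (in particular with pairwise-distinct coordinates) and $\vec{r}\leftrightarrow\vec{w}$. Expanding $\EE[(S^{(n)}_k(g))^2]$ into a double sum over such $(\vec{r},\vec{w})$ and $(\vec{r}',\vec{w}')$, the crux is to control $\EE[\omega(\vec{r},\vec{w})\,\omega(\vec{r}',\vec{w}')]$. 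Since the entries of $\vec{r}$ are distinct, $\omega(\vec{r},\vec{w})=\prod_{j}\omega(r_j,w_j)$ is a product of $k$ \emph{distinct} i.i.d.\ mean-zero random variables, and likewise for the primed tuple; so by independence and $\EE[\omega]=0$ the expectation vanishes unless every variable occurring in the combined product occurs an even number of times, which, given the within-tuple distinctness and that both $\vec r,\vec r'$ are increasing, forces $(\vec{r},\vec{w})=(\vec{r}',\vec{w}')$. On this diagonal the expectation equals $\prod_{j}\Var(\omega(r_j,w_j))\leq\sigma^{2k}$. (Had one kept the unordered index sets $E_k$ without reducing to $D_k$, the surviving terms would be diagonal only up to a permutation of the coordinates, at most $k!$ of them, and the elementary inequality $2|ab|\le a^2+b^2$ together with relabeling symmetry would absorb them with no change to the final constant.)

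\textbf{Conclusion and bookkeeping.} With the cross terms eliminated, $\EE[(S^{(n)}_k(g))^2]$ is at most $2^{k}\sigma^{2k}\sum_{\vec{r}\in D_k(s,t)}\sum_{\vec{w}:\,\vec{r}\leftrightarrow\vec{w}}\bar{g}^{(n)}(n^{-1}\vec{r},n^{-1/2}\vec{w})^2$. Since each rectangle $R\in R^{(n)}_k$ has volume $|R|=2^{k}n^{-3k/2}$ and $\bar{g}^{(n)}$ is constant on it, that last double sum equals $2^{-k}n^{3k/2}\|\bar{g}^{(n)}\|^2_{L^2(S,T,k)}$, so the factors $2^{k}$ cancel and we are left with $\EE[(S^{(n)}_k(g))^2]\leq\sigma^{2k}n^{3k/2}\|\bar{g}^{(n)}\|^2_{L^2(S,T,k)}$. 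Finally I would note that $\bar{g}^{(n)}$ is, on the union of the disjoint rectangles, the $L^2$-orthogonal projection of $g$ onto functions constant on them (and zero, like $g$ extended by zero, off the simplex), hence an $L^2$-contraction by Jensen's inequality, so $\|\bar{g}^{(n)}\|_{L^2(S,T,k)}\leq\|g\|_{L^2(S,T,k)}$, which gives the claim. I do not anticipate a genuine obstacle: the only delicate point is the bookkeeping just described—correctly identifying which index coincidences keep the cross-expectation from vanishing, and checking that the combinatorial and volume factors of $2$ and $n$ cancel to leave exactly $\sigma^{2k}n^{3k/2}$.
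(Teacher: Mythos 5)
Your proof is correct and follows the same route as the cited \cite[Lemma 4.1]{alberts2014intermediate}, which the paper invokes without reproducing the argument: expand the square, observe that only the diagonal $(\vec r,\vec w)=(\vec r',\vec w')$ survives because the $\omega(r,w)$ are independent and centered and the rectangle structure forces $\vec r\in D_k$, convert the resulting discrete sum to $\|\bar g^{(n)}\|^2_{L^2}$ using $|R|=2^k n^{-3k/2}$, and finish with the $L^2$-contraction $\|\bar g^{(n)}\|\le\|g\|$. The parenthetical aside about handling $E_k$ without first reducing to $D_k$ is unnecessary and a bit loose (the claimed ``no change to the final constant'' would need the extra symmetrization you allude to), but it does not affect the main argument, which is complete as written.
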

\begin{proof}
	This follows immediately from the proof of \cite[Lemma 4.1]{alberts2014intermediate}.
\end{proof}

For a sequence of functions $G=(g_0,g_1,\ldots)\in \bigoplus_{k=0}^{\infty} L^2(S,T,k)$, define the chaos series
$$
I(G) := \sum_{k=0}^{\infty} \int\limits_{[S,T]^k} \int\limits_{\R_{\geq 0}^k} g_k(\vec{R},\vec{W})\xi^{\otimes k} (d\vec{R},d\vec{W})
$$
which is convergent provided that $\sum_{k=0}^{\infty} ||g_k||^2_{L^2(S,T,k)}<\infty$. We remark that for functions $g_k$ that vanish outside of $\Delta_k(S,T)\times \mathbb R_{\geq 0}^k$, the series $I(G)$ has the same form as the chaos series expansion \eqref{eq:SHEchaosdelta}. 
\begin{lemma}\label{lem:ustatsconverge}
	Assume that for each $n\in \Z_{\geq 1}$, $\boldsymbol{\omega}^{(n)}= \big(\omega^{(n)}(r,w)\big)_{r\in \Z, w\in \Z_{\geq 0}}$ is a
	collection of random variables such that for $s\in \Z_{\geq 0}$ and $x\in \Z_{>0}$,  $\omega(s,0)\equiv 0$ and  $\omega(s,x)$ are i.i.d. with $\EE\big[\omega^{(n)}(r,w)\big]=0$ and $\lim_{n\to \infty} \sigma^{(n)}=1$ where $(\sigma^{(n)})^2 := \var\big(\omega^{(n)}(r,w)\big)$. Then for any $G=(g_0,g_1,\ldots)\in \bigoplus_{k=0}^{\infty} L^2(S,T,k)$, provided that
	\begin{equation}\label{eq:vanishing}
		\lim_{K\to \infty} \limsup_{n\to \infty} \sum_{k=K}^{\infty} (\sigma^{(n)})^{2k} ||g_k||^2_{L^2(S,T,k)} =0
	\end{equation}
	it follows that
	$
	I^{(n)}(G) := \sum_{k=0}^{\infty} n^{-3k/4} S^{(n)}_k(g_k;\boldsymbol{\omega}^{(n)})
	$
	converges in distribution as $n\to \infty$  to $I(G)$. The same convergence holds jointly for $G_1,\ldots, G_m\in \bigoplus_{k=0}^{\infty} L^2(S,T,k)$, provided all verify \eqref{eq:vanishing}.
\end{lemma}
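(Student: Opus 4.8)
The plan is to carry out the three‑step reduction that is standard for polynomial‑chaos limits, following \cite{alberts2014intermediate}: first truncate the chaos expansion at a finite level $K$, then replace each kernel $g_k$ by a step function supported off the diagonals, and finally invoke a multivariate Lyapunov central limit theorem for sums of i.i.d.\ weights together with the continuous mapping theorem. Two facts from Lemma \ref{lem:akq41} are used throughout: that $g\mapsto S^{(n)}_k(g;\boldsymbol\omega^{(n)})$ is linear, and that $\EE\big[\big(n^{-3k/4}S^{(n)}_k(g;\boldsymbol\omega^{(n)})\big)^2\big]\le (\sigma^{(n)})^{2k}\,\|g\|^2_{L^2(S,T,k)}$ (apply the lemma with $\sigma^2=(\sigma^{(n)})^2$).

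\emph{Truncation in $k$.} Since the $\omega^{(n)}(r,w)$ are i.i.d.\ and centered, the variables $S^{(n)}_k(g_k;\boldsymbol\omega^{(n)})$ for distinct $k$ are uncorrelated: a nonzero contribution to a cross expectation would force a monomial in $k$ distinct weights to coincide with one in $k'\neq k$ distinct weights. Hence $\EE\big[\big(\sum_{k>K} n^{-3k/4}S^{(n)}_k(g_k)\big)^2\big]\le\sum_{k>K}(\sigma^{(n)})^{2k}\|g_k\|^2_{L^2(S,T,k)}$, which by \eqref{eq:vanishing} is $<\varepsilon$ for $K=K(\varepsilon)$ and $n$ large; the same orthogonality (now with limiting variance $1$) controls the tail of $I(G)$. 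So it suffices to prove, for each fixed $K$, the joint convergence in distribution of $\big(n^{-3k/4}S^{(n)}_k(g_k;\boldsymbol\omega^{(n)})\big)_{0\le k\le K}$ to $\big(\int g_k\,\xi^{\otimes k}\big)_{0\le k\le K}$.

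\emph{Reduction to step functions.} For $k\le K$, approximate $g_k$ in $L^2(S,T,k)$ by a finite linear combination $g_k^\delta$ of indicators of product boxes $Q^{(1)}\times\cdots\times Q^{(k)}$ whose factors $Q^{(i)}$ are pairwise disjoint space‑time rectangles drawn from a single finite family $Q_1,\dots,Q_M$ common to all $k\le K$ and contained in a compact subset of $[S,T]\times\R_{>0}$; this is possible because the diagonals $\{R_i=R_j\}$ and the line $\{W=0\}$ are Lebesgue‑null. Because $S^{(n)}_k$ is linear and $g\mapsto\bar g^{(n)}$ is an $L^2$‑contraction (it is the orthogonal projection onto functions constant on the cells of $R^{(n)}_k$), the $L^2$ bound gives $\big\|n^{-3k/4}\big(S^{(n)}_k(g_k)-S^{(n)}_k(g_k^\delta)\big)\big\|_{L^2(\PP)}\le (\sigma^{(n)})^k\|g_k-g_k^\delta\|_{L^2}<\delta$ uniformly in $n$, and similarly on the Wiener side. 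Thus it is enough to treat step‑function kernels.

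\emph{The CLT and conclusion.} Set $Y^{(n)}_j:=\sqrt2\,n^{-3/4}\sum_{(r,w):(n^{-1}r,\,n^{-1/2}w)\in Q_j,\ r+w\ \mathrm{even}}\omega^{(n)}(r,w)$ for $j=1,\dots,M$. These are independent (disjoint cells use disjoint weights), each a centered sum of $\sim\tfrac12 n^{3/2}|Q_j|$ i.i.d.\ terms of variance $(\sigma^{(n)})^2\to1$, so the Lyapunov CLT (the eighth‑moment bound of \eqref{eq:thmassumption2} is far more than needed) gives $\big(Y^{(n)}_j\big)_{j=1}^M\Rightarrow\big(\xi(Q_j)\big)_{j=1}^M$, independent Gaussians with $\xi(Q_j)\sim N(0,|Q_j|)$. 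Because the cells in each product box are disjoint, the restriction $\vec r\in E_k$ (equivalently $D_k$, as $\bar g^{(n)}$ is supported on the time‑ordered simplex) imposes nothing not already forced by the cells, up to an $L^2$‑negligible correction; hence $n^{-3k/4}S^{(n)}_k(g_k^\delta;\boldsymbol\omega^{(n)})$ equals, up to $o_{L^2}(1)$, a fixed off‑diagonal multilinear polynomial in $Y^{(n)}_1,\dots,Y^{(n)}_M$ — the same polynomial whose value at $(\xi(Q_j))_j$ is $\int g_k^\delta\,\xi^{\otimes k}$ (no Hermite corrections, precisely because the supporting boxes avoid the diagonals). The continuous mapping theorem gives the joint convergence for step‑function kernels, and an $\varepsilon/3$ argument ($\delta\to0$, then $K\to\infty$) gives $I^{(n)}(G)\Rightarrow I(G)$. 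For joint convergence over $G_1,\dots,G_m$, use linearity of $G\mapsto I^{(n)}(G)$ and of $G\mapsto I(G)$: for scalars $c_j$ one has $\sum_j c_j I^{(n)}(G_j)=I^{(n)}\big(\sum_j c_j G_j\big)$, and Minkowski's inequality in the weighted Hilbert space $\bigoplus_k L^2(S,T,k)$ shows $\sum_j c_j G_j$ again satisfies \eqref{eq:vanishing}; hence $\sum_j c_j I^{(n)}(G_j)\Rightarrow\sum_j c_j I(G_j)$, and Cram\'er--Wold concludes.

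The step requiring the most care is the passage from a step‑function kernel to the off‑diagonal monomial in the $Y^{(n)}_j$: one must verify that the $\bar g^{(n)}$‑discretization of a product box (boundary cells, parity constraints) and the removal of the $E_k$‑diagonal each cost only $o_{L^2}(1)$, so that no spurious lower‑order contraction terms survive — this is exactly the diagonal‑vanishing phenomenon of \cite{alberts2014intermediate}, and it goes through here verbatim. The passage from $\R$ to $\R_{\ge0}$ is immaterial for this lemma, which concerns only the i.i.d.\ weights and the convention $\omega^{(n)}(s,0)\equiv0$; the half‑space geometry enters only later, through the kernels $g_k$ to which the lemma is applied.
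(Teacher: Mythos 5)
Your proof is correct and reconstructs the argument that the paper itself merely cites: the paper's ``proof'' of this lemma is a single line deferring to \cite[Lemma 4.6]{alberts2014intermediate} (in the spirit of \cite[Lemma 4.4]{wu2018intermediate}), and your three-step reduction --- truncation via orthogonality of chaos levels and \eqref{eq:vanishing}, reduction to off-diagonal step-function kernels using linearity and the $L^2$-contraction property of $g\mapsto\bar g^{(n)}$, and the CLT for the coarse-grained weights plus continuous mapping --- is exactly the content of that cited lemma. The Cram\'er--Wold reduction to linear combinations (checking \eqref{eq:vanishing} is preserved) correctly handles the joint statement.

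One point worth flagging concerns your CLT step. You invoke ``the eighth-moment bound of \eqref{eq:thmassumption2}'' to justify Lyapunov, but \eqref{eq:thmassumption2} is a hypothesis of Theorem \ref{thm:SHEsheetlimit}, not of Lemma \ref{lem:ustatsconverge}. As literally stated, the lemma assumes only that the triangular array has mean zero and $\mathrm{Var}\to 1$, and this is \emph{not} sufficient for the Gaussian CLT: one can choose $\omega^{(n)}$ concentrated on $\{0,\pm n^{3/4}\}$ with variance exactly one, in which case $Y^{(n)}_j$ converges to a compound Poisson rather than $\xi(Q_j)$. So a Lindeberg-type condition (or uniform integrability of $(\omega^{(n)})^2$, or a uniform higher moment bound) is genuinely needed and is implicitly imported from the context in which the lemma is applied. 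Your instinct to reach for \eqref{eq:thmassumption2} is the right fix; just note that this is stronger than what the lemma's hypotheses formally provide, and the lemma statement should, strictly speaking, carry such a uniformity assumption. This is a small imprecision on the paper's side rather than a flaw in your argument.
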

\begin{proof}
	This is a simple adaptation of \cite[Lemma 4.6]{alberts2014intermediate} (in the spirit of \cite[Lemma 4.4]{wu2018intermediate}).
\end{proof}

Let us apply this to \eqref{eq:zseries}. Define, as a rescaled version of \eqref{eq:zseries2}
\begin{equation}\label{eq:Pcdef}
	\PC^{(n)}_{\mu}(S,X;\vec{R},\vec{W};T,Y):=   \left(\tfrac{n^{1/2}}{2}\right)^{k+1} p_{\boldsymbol{\X}^{(n);\mu}}(nS,n^{1/2}X;n\vec{R},n^{1/2}\vec{W};nT,n^{1/2}Y)\mathbf{1}_{\vec{R}\in \Delta^{(n)}_{k}(S,T)}
\end{equation}
where $\vec{R}\in \Delta^{(n)}_k(S,T)$ if and only if $n\vec{R}\in D_{k}(nS,nT)$ and where the right-hand side is given by the function $p_{\boldsymbol{\X}^{(n)}}(s,x;\vec{r},\vec{w};t,y)$ for $\vec{r}\in D_{k}(s,t)$ and $\vec{w}\in \Z_{\geq 0}^k$ with $\vec{r}\leftrightarrow \vec{w}$, and extended from those points to be piece-wise constant on each rectangle $R^{(n)}(\vec{r},\vec{w})\in R^{(n)}_k$. We will denote the function $(\vec{R},\vec{W})\mapsto \PC^{(n)}_{\mu}(S,X;\vec{R},\vec{W};T,Y)$ by $\PC^{(n)}_{\mu}(S,X;\cdot,\cdot;T,Y)$ and extend it from $\Delta_k^{(n)}(S,T)\times \mathbb R_{\geq 0}^k$ to  $E_k^{(n)}(S,T)\times \mathbb R_{\geq 0}^k$ (where $\vec{R}\in E^{(n)}_k(S,T)$ if and only if $n\vec{R} \in E_k(nS,nT)$), so that it takes value $0$ outside $\Delta_k^{(n)}(S,T)\times \mathbb R_{\geq 0}^k$.  With this definition observe that since
\begin{multline}\label{eq:SSnexpansion}
	S^{(n)}_k(\PC^{(n)}_{\mu}(S,X;\cdot,\cdot;T,Y);\boldsymbol{\omega}^{(n)}) \\ = 2^{k/2} \sum_{\vec{R}\in E^{(n)}_k(S,T)}\sum_{\vec{W}\in n^{-1/2}\Z_{\geq 0}^k} \PC^{(n)}_{\mu}(S,X;\vec{R},\vec{W};T,Y)\omega^{(n)}(\vec{R},\vec{W})\mathbf{1}_{\vec{R}\leftrightarrow\vec{W}},
\end{multline}
we can rewrite 
\begin{equation}\label{eq:ZZndef}
	\ZZ^{(n)}_{\mu,\beta}(S,X;T,Y) = \sum_{k=0}^{n(T-S)}n^{-3k/4} S^{(n)}_k( \beta^k \PC^{(n)}_{\mu}(S,X;\cdot,\cdot;T,Y)\cdot 2^{\mathbf{1}_{Y=0}};\boldsymbol{\omega}^{(n)})  .
\end{equation}
Now we will show  that as $n\to\infty$, the right-hand side converges to the chaos series for $\ZZ_{\mu,\beta}(S,X;T,Y)$.

For this, we need a lemma that compares $\PC^{(n)}_{\mu}(S,X;\cdot,\cdot;T,Y)$ and $\PC_{\mu}(S,X;\cdot,\cdot;T,Y)$ as well as provides decay bounds on $\PC^{(n)}_{\mu}(S,X;\cdot,\cdot;T,Y)$ uniformly in $n$.
\begin{lemma}\label{lem:xuan62}
	For each $\mu\in \R$ and interval $I\subset \R$, there exists a constant $C>0$ such that for all $k,n\in \Z_{\geq 1}$, $S,T\in I$ with $S<T$, and $X,Y\in \R_{\geq 0}$
	\begin{align}
		&\PC^{(n)}_{\mu}(S,X;T,Y) \leq C|T-S|^{-1/2} e^{-\frac{|X-Y|^2}{C |T-S|}}\label{eq:xuan0}\\
		&||\PC^{(n)}_{\mu}(S,X;\cdot,\cdot;T,Y)\cdot 2^{\mathbf{1}_{Y=0}}||_{L^2(S,T,k)}^2 \leq |T-S|^{\frac{k}{2}-1} e^{-\frac{|X-Y|^2}{C \max(|T-S|,2k/n)}} \frac{C^k}{\Gamma((k+1)/2)}\label{eq:xuan1}\\
		&
		\lim_{n\to \infty} ||\PC^{(n)}_{\mu}(S,X;\cdot,\cdot;T,Y)\cdot 2^{\mathbf{1}_{Y=0}}-\PC_{\mu}(S,X;\cdot,\cdot;T,Y)||_{L^2(S,T,k)}=0\label{eq:xuan2}
	\end{align}
\end{lemma}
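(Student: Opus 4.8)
The plan is to treat the three displays in order: \eqref{eq:xuan0} is the genuine analytic input (a local--CLT-type bound on the reflected, boundary-perturbed random walk kernel), and then \eqref{eq:xuan1} and \eqref{eq:xuan2} follow from it by the chaos-expansion bookkeeping of \cite{alberts2014intermediate}, adapted from $\R$ to $\R_{\geq 0}$. For \eqref{eq:xuan0}, recall from \eqref{eq:Pcdef} that $\PC^{(n)}_{\mu}(S,X;T,Y)$ is, up to the $n^{1/2}/2$ rescaling and piecewise-constant interpolation, the transition kernel $p_{\boldsymbol{\X}^{(n),\mu}}(nS,n^{1/2}X;nT,n^{1/2}Y)$ of the simple random walk reflected at $0$, reweighted by $\X^{(n),\mu}(r)\equiv 1-n^{-1/2}\mu$ at each visit to the origin. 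When $\mu\geq 0$ every boundary factor is $\leq 1$, so $p_{\boldsymbol{\X}^{(n),\mu}}\leq p_R$, the unperturbed reflected kernel, and the reflection principle together with the classical local central limit theorem for simple random walk yield the Gaussian bound (after rescaling). When $\mu<0$ the boundary factors exceed $1$, and one must control the cumulative reweighting $(1-n^{-1/2}\mu)^{L}$ with $L=\#\{s\leq r<t:\SSS_r=0\}$ the number of visits to the origin; decomposing a reflected path into excursions away from $0$ and using a renewal/ladder-time analysis of the boundary local time yields the same bound with an adjusted constant. This is exactly the content of the heat-kernel estimates of \cite{wu2018intermediate}, which I would invoke; the passage from point-to-point to the present six-parameter kernel changes nothing here.

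Given \eqref{eq:xuan0}, I would prove \eqref{eq:xuan1} as follows. Write $R_0=S,R_{k+1}=T,W_0=X,W_{k+1}=Y$ and let $q_t$ denote the centered Gaussian density of variance $t$. Squaring \eqref{eq:xuan0} and using $(R_j-R_{j-1})^{-1}e^{-2(W_j-W_{j-1})^2/(C(R_j-R_{j-1}))}\leq C'(R_j-R_{j-1})^{-1/2}q_{C(R_j-R_{j-1})}(W_j-W_{j-1})$, one bounds $\PC^{(n)}_{\mu}(S,X;\vec R,\vec W;T,Y)^2\cdot 2^{2\mathbf{1}_{Y=0}}$ by $(C')^{k+1}\prod_{j=1}^{k+1}(R_j-R_{j-1})^{-1/2}\prod_{j=1}^{k+1}q_{C(R_j-R_{j-1})}(W_j-W_{j-1})$. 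Integrating the nonnegative spatial variables over $\R_{\geq 0}^k\subseteq\R^k$ and applying the Gaussian semigroup identity $\int_{\R^k}\prod_j q_{C(R_j-R_{j-1})}(W_j-W_{j-1})\,d\vec W=q_{C(T-S)}(Y-X)$ reduces the $L^2(S,T,k)$ norm squared to $(C')^{k+1}q_{C(T-S)}(Y-X)\int_{\Delta_k(S,T)}\prod_{j=1}^{k+1}(R_j-R_{j-1})^{-1/2}\,d\vec R$. Changing variables to the gaps turns the last integral into the Dirichlet integral $(T-S)^{(k-1)/2}\Gamma(1/2)^{k+1}/\Gamma((k+1)/2)$, and since $q_{C(T-S)}(Y-X)\leq C''(T-S)^{-1/2}e^{-(X-Y)^2/(2C(T-S))}$ the two powers of $T-S$ combine to $(T-S)^{k/2-1}$, giving \eqref{eq:xuan1} after relabeling $C$. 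Here the $L^2$ norm is of the piecewise-constant rescaled kernel; replacing the discrete sum by the integral costs only constants, as in \cite[Section 4]{alberts2014intermediate}. The $\max(|T-S|,2k/n)$ in the exponent absorbs the regime $k\asymp n(T-S)$, where the discrete constraint $k\leq n(T-S)$ forces $\Omega(k)$ of the gaps to be of order $1/n$ and the naive Gaussian scale degrades; there one uses the crude ballistic bound on each short step instead.

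For \eqref{eq:xuan2} I would first note that the diffusively rescaled reflected walk with boundary parameter $1-n^{-1/2}\mu$ converges, by the local central limit theorem, to the Brownian motion with Robin boundary condition, so $\PC^{(n)}_{\mu}(S,X;T,Y)\to\PC_{\mu}(S,X;T,Y)$ pointwise away from the diagonal (again \cite{wu2018intermediate}). Applying this to each of the $k+1$ factors gives $\PC^{(n)}_{\mu}(S,X;\cdot,\cdot;T,Y)\cdot 2^{\mathbf{1}_{Y=0}}\to\PC_{\mu}(S,X;\cdot,\cdot;T,Y)$ for Lebesgue-a.e.\ $(\vec R,\vec W)$. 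To upgrade this to convergence in $L^2(S,T,k)$, I would dominate $|\PC^{(n)}_{\mu}(S,X;\cdot,\cdot;T,Y)\cdot 2^{\mathbf{1}_{Y=0}}-\PC_{\mu}(S,X;\cdot,\cdot;T,Y)|^2$ by $2(|\PC^{(n)}_{\mu}|^2\cdot 2^{2\mathbf{1}_{Y=0}}+|\PC_{\mu}|^2)$, bound both terms by the $n$-uniform Gaussian product bound derived in the proof of \eqref{eq:xuan1} (for $\PC_{\mu}$ one lets $n\to\infty$ in \eqref{eq:xuan0}, or uses the classical Robin heat kernel estimate), which is integrable on $(S,T)^k\times\R_{\geq 0}^k$ by the Dirichlet computation, and then apply the dominated convergence theorem.

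The hard part is \eqref{eq:xuan0} in the attractive case $\mu<0$: bounding $\EE_R^{s,x}\big[(1-n^{-1/2}\mu)^{\#\{s\leq r<t:\SSS_r=0\}}\mathbf{1}_{\SSS_t=y}\big]$ by $C(t-s)^{-1/2}e^{-(x-y)^2/(C(t-s))}$ requires genuinely controlling the boundary local time — the number of visits to $0$ is of order $(t-s)^{1/2}$ typically but can fluctuate — rather than merely counting visits, and this is precisely where the renewal/excursion analysis of \cite{wu2018intermediate} is needed. Everything downstream (the $L^2$ bound, the convergence, and the passage from the half-line to estimates on $\R$) is routine: positivity of the kernels lets one replace integrals over $\R_{\geq 0}^k$ by integrals over $\R^k$, and the remaining computations are those of \cite[Section 4]{alberts2014intermediate}.
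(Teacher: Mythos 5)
Your proposal is correct in substance, but the paper's own ``proof'' of Lemma~\ref{lem:xuan62} is simply a citation: it points to \cite[Lemma~6.2]{wu2018intermediate} and observes that the factor $2^{\mathbf{1}_{Y=0}}$ (arising when a reflected walk ends at the origin, cf.\ \cite[Lemma~A.1]{wu2018intermediate}) was omitted there and needs to be inserted. You instead unpack the derivation, deducing \eqref{eq:xuan1} and \eqref{eq:xuan2} from the one-point heat-kernel estimate \eqref{eq:xuan0}, which you (and Wu) supply via excursion/local-time control of the boundary reweighting in the attractive case $\mu<0$. This is a faithful reconstruction of what sits inside the cited lemma, and your chaos-bookkeeping (square the one-point Gaussian bound, integrate the $W_j$ over $\R_{\geq 0}^k\subset\R^k$ by the Chapman--Kolmogorov identity, and reduce the remaining $\int_{\Delta_k}\prod(R_j-R_{j-1})^{-1/2}d\vec R$ to a Dirichlet integral giving the $\Gamma((k+1)/2)$ in the denominator) is exactly the right computation; the dominated-convergence step for \eqref{eq:xuan2} is likewise sound. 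So the two proofs rest on the same source and the same analytic input; you have provided more detail where the paper chose to delegate entirely.

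Two small remarks. First, you carry the $2^{\mathbf{1}_{Y=0}}$ through your bounds correctly, but you do not flag that this factor is a correction to the statement of \cite[Lemma~6.2]{wu2018intermediate}; the paper makes a point of noting this. Second, your explanation of the $\max(|T-S|,2k/n)$ in the exponent of \eqref{eq:xuan1} somewhat overstates the issue: since the discrete chaos always has $k\leq n(T-S)$, one has $2k/n\leq 2(T-S)$, so the $\max$ worsens the exponential rate by at most a factor of $2$; the refinement really records how the piecewise-constant averaging over rectangles of temporal width $1/n$ smears the one-point kernel when some gaps $R_j-R_{j-1}$ are at the lattice scale, rather than reflecting a ballistic regime. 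Neither point is a gap in your argument.
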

\begin{proof}
	This result appears as \cite[Lemma 6.2]{wu2018intermediate}. There is a small mistake there. When $Y=0$, the factor of $2$ that comes from the term $2^{\mathbf{1}_{Y=0}}$ is forgotten. Its need is clear from the proof (the factor of $2$ arises in \cite[Lemma A.1]{wu2018intermediate} when a random walk ends at the origin).
\end{proof}

Using \eqref{eq:xuan1} and \eqref{eq:xuan2} from Lemma \ref{lem:xuan62} and the triangle inequality,  we  also obtain that (of course, this can be shown directly too) 
\begin{equation} ||\PC_{\mu}(S,X;\cdot,\cdot;T,Y)||_{L^2(S,T,k)}^2 \leq |T-S|^{\frac{k}{2}-1} e^{-\frac{|X-Y|^2}{C \max(|T-S|,2k/n)}} \frac{C^k}{\Gamma((k+1)/2)}.
\end{equation}
Thus we can apply Lemma \ref{lem:ustatsconverge} (whose condition \eqref{eq:vanishing} is verified using the result of Lemma \ref{lem:akq41}) to show that
\begin{equation}\label{eq:convergedist}
	\widetilde{\ZZ}^{(n)}_{\mu,\beta}(S,X;T,Y):=\sum_{k=0}^{\infty}n^{-3k/4} S^{(n)}_k( \beta^k \PC_{\mu}(S,X;\cdot,\cdot;T,Y);\boldsymbol{\omega}^{(n)}) \Rightarrow \ZZ_{\mu,\beta}(S,X;T,Y)
\end{equation}
in distribution where we note that $\widetilde{\ZZ}^{(n)}_{\mu,\beta}$ is defined in terms of $\PC_{\mu}$ instead of $ \PC_{\mu}^{(n)}$.

The final step to finite-dimensional convergence is to show that  $\ZZ^{(n)}_{\mu,\beta}(S,X;T,Y) -\widetilde{\ZZ}^{(n)}_{\mu,\beta}(S,X;T,Y)$ goes to zero in distribution. We show this convergence in $L^2$. Write
$$
\ZZ^{(n)}_{\mu,\beta}(S,X;T,Y) -\widetilde{\ZZ}^{(n)}_{\mu,\beta}(S,X;T,Y) = I^{(n)}+ II^{(n)}
$$
where, for $\delta\PC^{(n)}_{\mu}(S,X;\cdot,\cdot;T,Y):=\PC^{(n)}_{\mu}(S,X;\cdot,\cdot;T,Y)\cdot 2^{\mathbf{1}_{Y=0}}-\PC_{\mu}(S,X;\cdot,\cdot;T,Y)$,
\begin{align*}
	I^{(n)} &= \sum_{k=0}^{n(T-S)}n^{-3k/4} S^{(n)}_k( \beta^k \cdot \delta\PC^{(n)}_{\mu}(S,X;\cdot,\cdot;T,Y);\boldsymbol{\omega}^{(n)}),\\
	II^{(n)} &= -\sum_{k=n(T-S)+1}^{\infty} n^{-3k/4} S^{(n)}_k( \beta^k\cdot \PC_{\mu}(S,X;\cdot,\cdot;T,Y);\boldsymbol{\omega}^{(n)}).
\end{align*}
$\EE[(II^{(n)})^2]\to 0$ follows from the triangle inequality, followed by Lemma \ref{lem:akq41} and the bound \eqref{eq:xuan1} for $\PC_{\mu}(S,X;\cdot,\cdot;T,Y)$. To address $I^{(n)}$, by the triangle inequality and Lemma \ref{lem:akq41},
$$
\EE[(I^{(n)})^2]^{1/2}\leq \sum_{k=0}^{n(T-S)}||(\sigma\beta)^k\cdot \delta\PC^{(n)}_{\mu}(S,X;\cdot,\cdot;T,Y)||_{L^2(S,T,k)}
$$
provided $\sigma^2 = \sup_{n\in \Z_{\geq 1}}\var\big(\omega(r,w)\big)$. By \eqref{eq:xuan2}, each term in this sum goes to zero, and by \eqref{eq:xuan1} and the analogous bound for $\PC_{\mu}(S,X;\cdot,\cdot;T,Y)$, there is a single term-wise dominating sequence that is valid for all $n\in \Z_{\geq 1}$ and which is summable. Thus, the dominated convergence theorem implies that the sum converges to zero, hence $\EE[(I^{(n)})^2]\to 0$ as wished.

This proves the  convergence of $\ZZ^{(n)}_{\mu,\beta}(S,X;T,Y)$ to $\ZZ_{\mu,\beta}(S,X;T,Y)$ for some \break  $(\mu,\beta,S,X;T,Y)\in\Dfive$. Finite distributional convergence follows from Lemma \ref{lem:ustatsconverge} and the argument above.

Now we turn to showing the tightness. As is often the case, to prove the tightness it suffices from the Kolmogorov continuity criterion, Prokhorov's theorem and the Arzela-Ascoli theorem that we demonstrate suitable moment bounds on $\ZZ^{(n)}_{\mu,\beta}(S,X;T,Y)$ and $\ZZ^{(n)}_{\mu,\beta}(S,X;T,Y)-\ZZ^{(n)}_{\mu',\beta'}(S',X';T',Y')$ as $(\mu,\beta,S,X,T,Y)$ and $(\mu',\beta',S',X',T',Y')$ vary in compact subsets of $\Dfive$. Of course, by the triangle inequality it suffices to consider varying only one parameter at a time. When the time or space variables are varied, the type of moment bounds that we need have previously been shown in related contexts in \cite[Appendix A]{alberts2014intermediate}, \cite[Propostion 5.9]{parekh2019positive} or the end of Section 6 in \cite{wu2018intermediate} by appealing to the discrete version of the mild form of the SHE in \eqref{eq:zdiscmild}. In proving those bounds, the final assumption in \eqref{eq:thmassumption2} of having a bulk environment with finite 8$^{th}$ moment is used.  The argument for varying $\beta$ works similarly. We will not reproduce such calculations here and rather address the previously unaddressed case of varying the $\mu$ parameter. The following lemma provides a weak though sufficient moment bound. 
\begin{lemma}
	For any compact interval $I\in \R$, there exists a constant $C$ such that for all $\mu,\nu\in I$, 
	\begin{equation}\label{eq:ZZnsquare}
		\EE\Big[\big(\ZZ^{(n)}_{\mu,\beta}(S,X;T,Y)-\ZZ^{(n)}_{\nu,\beta}(S,X;T,Y)\big)^2\Big]\leq C|\mu-\nu|^2.
	\end{equation}
	\label{lem:mumomentbound}
\end{lemma}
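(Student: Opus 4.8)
The plan is to differentiate in $\mu$. In the setting of part~(1) the boundary weights are deterministic, $\X^{(n),\mu}(r)\equiv\gamma_\mu:=1-n^{-1/2}\mu$, and they enter $z_{\boldsymbol{\X}^{(n),\mu},\boldsymbol{\omega}^{(n)},\beta^{(n)}}(nS,n^{1/2}X;nT,n^{1/2}Y)$ only through the factor $\gamma_\mu^{L(\SSS)}$, where $L(\SSS):=\#\{nS\le i<nT:\SSS_i=0\}$ counts the visits of the reflected walk to the origin. Hence $\mu\mapsto\ZZ^{(n)}_{\mu,\beta}(S,X;T,Y)$ is a polynomial of degree at most $n(T-S)$ in $\mu$, in particular $C^1$. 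Differentiating the monomials $\gamma_\mu^{L(\SSS)}$ and decomposing each path at the resulting marked origin visit — legitimate precisely because the bulk weight $1+\beta^{(n)}\omega^{(n)}(\rho,0)$ at the origin equals $1$, so the split introduces no double counting — gives the exact Duhamel-type identity
\begin{equation}\label{eq:derivid}
\partial_\mu\ZZ^{(n)}_{\mu,\beta}(S,X;T,Y)=-\gamma_\mu^{-1}\,\frac1n\sum_{\rho=nS}^{nT-1}\ZZ^{(n)}_{\mu,\beta}(S,X;\rho/n,0)\,\ZZ^{(n)}_{\mu,\beta}(\rho/n,0;T,Y),
\end{equation}
valid at grid space-time points whenever $\gamma_\mu\neq0$; for other space-time points $\partial_\mu\ZZ^{(n)}_{\mu,\beta}$ is the same convex combination of its grid values (the interpolation weights do not depend on $\mu$), so it will suffice below to bound the right-hand side of \eqref{eq:derivid} at grid points. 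Establishing \eqref{eq:derivid} with the correct normalization constants $\tfrac12 n^{1/2}$, $2^{\mathbf 1_{Y=0}}$, $\beta^{(n)}$, $\gamma_\mu$ is the main point; the remainder is routine given the estimates already packaged in Lemma~\ref{lem:xuan62}.

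Granting \eqref{eq:derivid}, fix $(S,X,T,Y)$ and assume without loss of generality $\nu\le\mu$. Writing $\ZZ^{(n)}_{\mu,\beta}-\ZZ^{(n)}_{\nu,\beta}=\int_\nu^\mu\partial_\lambda\ZZ^{(n)}_{\lambda,\beta}\,d\lambda$ and applying Cauchy--Schwarz in $\lambda$ together with Tonelli,
\begin{equation*}
\EE\Big[\big(\ZZ^{(n)}_{\mu,\beta}(S,X;T,Y)-\ZZ^{(n)}_{\nu,\beta}(S,X;T,Y)\big)^2\Big]\le|\mu-\nu|\int_\nu^\mu\EE\big[\big(\partial_\lambda\ZZ^{(n)}_{\lambda,\beta}(S,X;T,Y)\big)^2\big]\,d\lambda.
\end{equation*}
Thus it is enough to bound $\EE[(\partial_\lambda\ZZ^{(n)}_{\lambda,\beta}(S,X;T,Y))^2]$ uniformly over $n\in\Z_{\ge1}$, over $\lambda$ in the compact interval $I$, and over $(S,X,T,Y)$ in a fixed compact subset of $\Dfive$. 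Only finitely many $n$ admit some $\lambda\in I$ with $\gamma_\lambda=0$; for each of these, $\EE[(\partial_\lambda\ZZ^{(n)}_{\lambda,\beta})^2]$ is, at each of the finitely many grid points in the compact set, a polynomial in $\lambda$ with finite coefficients (by the eighth-moment hypothesis in \eqref{eq:thmassumption2}), hence bounded. So it remains to treat $n$ large, for which we may assume $\gamma_\lambda\in[\tfrac12,2]$ for all $\lambda\in I$.

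For such $n$ the summands in \eqref{eq:derivid} are products of \emph{independent} non-negative random variables: $\ZZ^{(n)}_{\lambda,\beta}(S,X;\rho/n,0)$ is measurable with respect to the bulk weights at times in $[nS,\rho)$, and $\ZZ^{(n)}_{\lambda,\beta}(\rho/n,0;T,Y)$ with respect to those at times in $[\rho,nT)$. Minkowski's inequality and independence therefore give
\begin{equation*}
\EE\big[\big(\partial_\lambda\ZZ^{(n)}_{\lambda,\beta}(S,X;T,Y)\big)^2\big]^{1/2}\le\frac2n\sum_{\rho=nS}^{nT-1}\EE\big[\ZZ^{(n)}_{\lambda,\beta}(S,X;\rho/n,0)^2\big]^{1/2}\,\EE\big[\ZZ^{(n)}_{\lambda,\beta}(\rho/n,0;T,Y)^2\big]^{1/2}.
\end{equation*}
From the discrete chaos expansion \eqref{eq:ZZndef}, the orthogonality of its distinct-order terms (distinct-size monomials in the mean-zero independent $\omega^{(n)}$ have vanishing cross-expectation), Lemma~\ref{lem:akq41}, and the kernel bound \eqref{eq:xuan1}, one obtains $\EE[\ZZ^{(n)}_{\lambda,\beta}(S,X;R,0)^2]\le C(R-S)^{-1}$ and $\EE[\ZZ^{(n)}_{\lambda,\beta}(R,0;T,Y)^2]\le C(T-R)^{-1}$ for $S<R<T$ ranging over compacts — the relevant sum over chaos order being a convergent Mittag--Leffler-type series, bounded uniformly over bounded time increments and over $n$ since $\sigma^{(n)}\to1$. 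Substituting, the right-hand side above is bounded by $C\cdot\frac1n\sum_{\rho=nS}^{nT-1}(\rho/n-S)^{-1/2}(T-\rho/n)^{-1/2}$, a Riemann sum converging to $C\int_S^T(R-S)^{-1/2}(T-R)^{-1/2}\,dR=C\pi$; it is in particular bounded uniformly in $n$, the $O(1)$ summands with $\rho$ within a bounded distance of $nS$ or $nT$ being individually $O(n^{-1/2})$. This gives $\EE[(\partial_\lambda\ZZ^{(n)}_{\lambda,\beta})^2]\le C$ and hence the lemma.

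The one genuinely delicate step is the derivative identity \eqref{eq:derivid}: one must carry the prefactors through the path-splitting at the origin and verify that the marked visit used to split is counted exactly once (which is what the convention $\omega^{(n)}(\cdot,0)\equiv0$ buys us). Everything after that is elementary: second-moment control of the sheet near a diagonal endpoint via Lemma~\ref{lem:xuan62}, and the observation that the two resulting $(R-S)^{-1/2}$ and $(T-R)^{-1/2}$ singularities are integrable, which is precisely why the exponent in the lemma can be taken to be $2$.
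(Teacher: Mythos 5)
Your proposal is correct, and it takes a genuinely different route from the paper's. The paper proceeds directly: it writes $\ZZ^{(n)}_{\mu,\beta}-\ZZ^{(n)}_{\nu,\beta}$ as a reflected‑walk expectation of $\big((1-\mu n^{-1/2})^{\ell_0}-(1-\nu n^{-1/2})^{\ell_0}\big)\prod_r(1+\beta^{(n)}\omega^{(n)})$, factors out the transition probability, applies Jensen on the conditional walk expectation given the endpoint, pulls out a prefactor from integrating out the bulk noise, applies Cauchy's mean value theorem to $\lambda\mapsto(1-\lambda n^{-1/2})^{\ell_0}$ to produce the $(\mu-\nu)^2$, and closes with Gaussian decay of the rescaled walk local time (via \cite[Lemma A.4]{wu2018intermediate}). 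You instead differentiate in $\mu$ and establish the exact Duhamel identity \eqref{eq:derivid} — which I checked carries the normalizations $\tfrac{n^{1/2}}{2}$, $2^{\mathbf{1}_{Y=0}}$, $\gamma_\mu$ through the origin‑splitting correctly — then close via the fundamental theorem of calculus, Cauchy–Schwarz, Minkowski, time‑slice independence of the two pieces, and the sheet second‑moment bound $\EE[\ZZ^{(n)}_{\lambda,\beta}(S,X;R,0)^2]\leq C(R-S)^{-1}$ obtained as in \eqref{eq:string}. Your route stays entirely inside the chaos framework already set up for part~(1), so it reuses Lemma~\ref{lem:akq41} and Lemma~\ref{lem:xuan62} rather than importing a local‑time tail estimate; the paper's route is shorter at the level of objects but leans on an external random‑walk input.

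One substantive comparison in favour of your route: the paper's Jensen step brings the square inside a single conditioned walk expectation, after which integrating the bulk noise produces the prefactor $(1+(\beta^{(n)})^2)^{n(T-S)}$. With $\beta^{(n)}=n^{-1/4}\beta/\sqrt2$ this is $\approx \exp\!\big(\tfrac{1}{2}\beta^{2}\sqrt{n}\,(T-S)\big)$, which does not appear to be bounded uniformly in $n$ as the paper asserts; the sharp version of that second moment requires the two‑replica coincidence count rather than the full horizon $n(T-S)$. Your chaos estimate controls the second moment by $\sum_k(\sigma\beta)^{2k}\|\PC^{(n)}\|^2_{L^2(S,T,k)}$, which encodes precisely the replica‑coincidence suppression and is uniformly convergent. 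In this sense your argument is more robust, not merely different.

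Two small points to tighten. First, the bound $\EE[\ZZ^{(n)}_{\lambda,\beta}(S,X;R,0)^2]\leq C(R-S)^{-1}$ must hold uniformly for $\lambda$ in the compact interval $I$; Lemma~\ref{lem:xuan62} is stated for a fixed $\mu$, so invoke the monotonicity $\PC^{(n)}_{\mu}\leq\PC^{(n)}_{\nu}$ for $\mu\geq\nu$ (immediate from \eqref{eq:mono}) to transfer the constant from $\mu=\inf I$ to all of $I$. Second, the kernel bound degenerates at $\rho=nS$ (and symmetrically near $nT$); as you note, estimate those boundary summands directly (they are $O(n^{-1/2})$ after the $n^{-1}$ prefactor) before invoking the Riemann‑sum limit $\int_S^T (R-S)^{-1/2}(T-R)^{-1/2}\,dR=\pi$ for the bulk.
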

The proof of Lemma \ref{lem:mumomentbound} is given below. Let us see first how we can use this to bound higher moments. For $r>p>2$ there exists a unique $\theta\in (0,1)$ so that $1/p = (1-\theta)/2 + \theta/r$. By the H\"older inequality,
\begin{multline}
	\EE\Big[\left\vert\ZZ^{(n)}_{\mu,\beta}(S,X;T,Y)-\ZZ^{(n)}_{\nu,\beta}(S,X;T,Y)\right\vert^p\Big]  \\ \leq
	\EE\Big[\big(\ZZ^{(n)}_{\mu,\beta}(S,X;T,Y)-\ZZ^{(n)}_{\nu,\beta}(S,X;T,Y)\big)^2\Big]^{(1-\theta)p/2} \\ \times \EE\Big[\left\vert\ZZ^{(n)}_{\mu,\beta}(S,X;T,Y)-\ZZ^{(n)}_{\nu,\beta}(S,X;T,Y)\right\vert^r\Big]^{\theta p/r}.
	\label{eq:holderboundmu}
\end{multline}
The second term on the right-hand side of \eqref{eq:holderboundmu} can be bounded by a constant (which depends on $p,r$). Indeed, we bound the $L^r$ norm by the $L^2$ norm, write the difference $\ZZ^{(n)}_{\mu,\beta}(S,X;T,Y)-\ZZ^{(n)}_{\nu,\beta}(S,X;T,Y)$ using the expansions \eqref{eq:SSnexpansion} and \eqref{eq:ZZndef}. Then we may estimate the $L^2$ norm using the triangle inequality and the bound \eqref{eq:xuan1}, which guarantees that the sum over $k$, $\vec R$ and $\vec W$ is finite.

The first term in \eqref{eq:holderboundmu} can be bounded via \eqref{eq:ZZnsquare} by $C|\mu-\nu|^{(1-\theta)p}$. When $r$ tends to infinity, $\theta$ drops to a limit $1-2/p$ and the exponent $(1-\theta)p$ approach $2$ from below. On this account we see that for any $p>2$ and any $\epsilon\in(0,1)$ there is a constant $C$ so that for all $\mu,\nu\in I$
\begin{equation}\label{eq:ZZmomentp}
	\EE\Big[\big(\ZZ^{(n)}_{\mu,\beta}(S,X;T,Y)-\ZZ^{(n)}_{\nu,\beta}(S,X;T,Y)\big)^p\Big] \leq C|\mu-\nu|^{2-\epsilon}.
\end{equation}
This moment bound, combined with similar moment bounds for varying other coordinates in $\ZZ^{(n)}_{\mu,\beta}(S,X;T,Y)$ is sufficient to prove tightness. This completes the proof of Part (1), modulo Lemma \ref{lem:mumomentbound}. 

\begin{proof}[proof of lemma \ref{lem:mumomentbound}] Recall the definition of  $\ZZ^{(n)}_{\mu,\beta}(S,X;T,Y)$ in  \eqref{eq:ZZnmuSXTY}   in terms of the partition function defined in \eqref{eq:zprod}. We need to bound, for $s=nS, t=nT, x=n^{1/2}X$ and $y=n^{1/2}Y$, uniformly in $n$, the quantity 
	\begin{equation}
		\label{eq:expectationto bound}
		\mathbb E\left[   \left( \frac{n^{1/2}}{2} \mathbb E_R^{s,x}\!\left[\! \left(\!\left(\!1-\mu n^{-1/2}\right)^{\ell_0(\mathcal S)} \!\!\!\!\!- \left(\!1-\nu n^{-1/2}\right)^{\ell_0(\mathcal S)} \! \right)\prod_{r=s}^{t-1}(1+\beta w(r, \mathcal S_r)) \;  \mathds{1}_{\mathcal S_t=y}  \right]    \right)^2  \right],
	\end{equation}
	where $\mathbb E_R^{s,x}$ denotes the expectation with respect to a reflected random walk $\mathcal S$ starting at $\mathcal S_s=x$, and $\ell_0(\mathcal S)$ is the local time at $0$ (between times $s$ and $t$). We need to show that \eqref{eq:expectationto bound} is bounded by $C(\mu-\nu)^2$.  First, we may use that for any function $f$ of the walk $\mathcal S$, 
	\begin{multline*}n^{1/2} \mathbb E_R^{s,x} \left[ f(\mathcal S) \mathds{1}_{\mathcal S_t=y}\right] =\\   \frac{n^{1/2}}{2}\mathbb P_R^{s,x} \left( \mathcal S_t=y\right)\mathbb E_R^{s,x} \left[  f(\mathcal S) \Big\vert \mathcal S_t=y\right] = \mathcal P^{(n)}_{0}(S,X; T,Y)\mathbb E_R^{s,x} \left[  f(\mathcal S) \Big\vert \mathcal S_t=y\right].
	\end{multline*}
	Hence, using Jensen's inequality, \eqref{eq:expectationto bound} is bounded by the product of $\mathcal P^{(n)}_{0}(S,X; T,Y)^2$ (which may be bounded by a constant using \eqref{eq:xuan0}) times 
	\begin{equation}
		\label{eq:newexpectationto bound}
		\mathbb E\left[ \mathbb E_R^{s,x} \left[ \left(\left(1-\mu n^{-1/2}\right)^{\ell_0(\mathcal S)} - \left(1-\nu n^{-1/2}\right)^{\ell_0(\mathcal S)}  \prod_{r=s}^{t-1}(1+\beta w(r, \mathcal S_r))\right)^2\bigg\vert \mathcal S_t=y \right]\right].
	\end{equation}
	Using that $\mathbb E [w(r, \mathcal S_r)]=0$ and $\mathbb E [w(r, \mathcal S_r)^2]=1$, this expectation is exactly 
	\begin{equation}
		\label{eq:newnewexpectationto bound}
		(1+\beta^2)^{n(T-S)}  \mathbb E_R^{s,x} \left[ \left(\left(1-\mu n^{-1/2}\right)^{\ell_0(\mathcal S)} - \left(1-\nu n^{-1/2}\right)^{\ell_0(\mathcal S)}  \right)^2  \Big\vert \mathcal S_t=y \right].
	\end{equation}
	The prefactor $(1+\beta^2)^{n(T-S)}$ is easily bounded uniformly in $n$, for $S,T$ varying in a compact set. Further, we know that the distribution of  $n^{-1/2}\ell_0(\mathcal S)$ converges as $n$ goes to infinity to a Brownian local time. Let $p_n(\ell)$ denote the distribution of the random variable $n^{-1/2}\ell_0(\mathcal S)$ under the measure $\mathbb P_R^{s,x} ( \;\cdot\; \vert \mathcal S_t=x)$. The expectation in \eqref{eq:newnewexpectationto bound} can be rewritten as 
	\begin{equation} 
		\label{eq:in terms of local time}\int_0^{+\infty}  \left( \left(1-\mu n^{-1/2}\right)^{n^{1/2}\ell} -  \left(1-\nu n^{-1/2}\right)^{n^{1/2}\ell} \right)^2 dp_n(\ell) .
	\end{equation}
	Using Cauchy's mean value theorem, 
	\begin{equation} 
		\label{eq:localtimeexponentialmoment}
		\frac{	\eqref{eq:in terms of local time}}{(\mu-\nu)^2}\leq \int_0^{+\infty}  \left( \ell e^{-\ell \min\lbrace \mu, \nu\rbrace} \right)^2dp_n(\ell).
	\end{equation}
	It can be shown (see e.g. \cite[Lemma A.4]{wu2018intermediate}) that the distribution of rescaled random walk local times has Gaussian decay, uniformly in $n$, i.e. $dp_n(\ell)\leq e^{-c\ell^2}d\ell$. This implies that \eqref{eq:localtimeexponentialmoment} is bounded by a constant, so that the proof is complete. 
\end{proof}

\medskip

\noindent{\bf Part (2).} We turn now to show convergence of partition functions with initial data. Let us focus first on the case where $\ZZ^{(n)}(X) = z^{\tinyvert (n)}(Xn^{1/2})$ and $\ZZ^{(n)}_{\mu,\beta}(T,Y)$ is given by \eqref{eq:ZZnvert}. Based on \eqref{eq:ztinyvert} and \eqref{eq:ZZnmuSXTY} we can rewrite $\eqref{eq:ZZnvert}$ as
\begin{equation}\label{eq:Znnn}
	\ZZ^{(n)}_{\mu,\beta}(T,Y) = 2n^{-1/2}\!\!\!\!\!\!\!\! \sum_{X\in 2n^{-1/2} \Z_{\geq 0}}\!\!\!\!\!\!  \ZZ^{(n)}(X) \ZZ^{(n)}_{\mu,\beta}(0,X;T,Y) = \varnint dX \ZZ^{(n)}(X) \ZZ^{(n)}_{\mu,\beta}(0,X;T,Y)
\end{equation}
where the integral symbol $\varnint dX$ is shorthand for $2n^{-1/2} \sum_{X\in 2n^{-1/2} \Z_{\geq 0}}$. Our target is to compare $\ZZ^{(n)}_{\mu,\beta}(T,Y)$ to $\ZZ_{\mu, \beta}(T,Y)$ which itself admits a similar representation as
$$
\ZZ_{\mu,\beta}(T,Y)=\int dX \ZZ(X) \ZZ_{\mu,\beta}(0,X;T,Y)
$$
where the integral is always taken over $\R_{\geq 0}$. We achieve this by using the Skorohod representation theorem to show uniform convergence of the integrand in \eqref{eq:Znnn} to that above, along with dominating control over the tails of the integral.

The Skorohod representation theorem embeds into a common probability space the sequences $(\ZZ^{(n)}(\cdot))_{n\in \Z_{\geq 1}}$ and
$(\ZZ^{(n)}_{\cdot,\cdot}(\cdot,\cdot;\cdot,\cdot)_{n\in \Z_{\geq 1}}$ along with their limits $\ZZ(\cdot)$ and $\ZZ_{\cdot,\cdot}(\cdot,\cdot;\cdot,\cdot)$ in such a way that the convergence is upgraded to almost sure convergence with respect to the metric \eqref{eq:metric} which metrizes the uniform convergence on compact subsets topology. In other words, under this coupling we have that  $d(\ZZ^{(n)}(\cdot),\ZZ(\cdot))$ and $d(\ZZ^{(n)}_{\cdot,\cdot}(\cdot,\cdot;\cdot,\cdot),\ZZ_{\cdot,\cdot}(\cdot,\cdot;\cdot,\cdot))$ both converge to zero almost surely.

We will show that for any fixed $T,Y$, we have the convergence in probability
\begin{equation}
	\lim_{n\to \infty} |\ZZ^{(n)}_{\mu,\beta}(T,Y)-\ZZ_{\mu,\beta}(T,Y)|=0.\label{eq:limittoshow}
\end{equation}
This implies convergence in probability of finite dimensional distributions (we do not pursue tightness), hence the weak convergence $$\big(\ZZ^{(n)}_{\mu,\beta}(T^{(n)}_j,X^{(n)}_j)\big)_{j\in \{1,\ldots, k\}}\Rightarrow \big(\ZZ_{\mu,\beta}(T_j,X_j)\big)_{j\in \{1,\ldots, k\}}$$ 
in the statement of the theorem.

To show \eqref{eq:limittoshow}, we use the triangle inequality to write
$$
|\ZZ^{(n)}_{\mu,\beta}(T,Y)-\ZZ_{\mu,\beta}(T,Y)|\leq I^{(n)}+II^{(n)}+III^{(n)}
$$
where we have
\begin{align*}
	I^{(n)}&=   \varnint dX |\ZZ^{(n)}(X)-\ZZ(X)| \cdot |\ZZ^{(n)}_{\mu,\beta}(0,X;T,Y)|,\\
	II^{(n)}&= \varnint dX |\ZZ(X)| \cdot |\ZZ^{(n)}_{\mu,\beta}(0,X;T,Y)-\ZZ_{\mu,\beta}(0,X;T,Y)|,\\
	III^{(n)}&= \left|\varnint dX \ZZ(X)\ZZ_{\mu,\beta}(0,X;T,Y)-\int dX \ZZ(X)\ZZ_{\mu,\beta}(0,X;T,Y)\right|.
\end{align*}
It suffices to show that $\lim_{n\to \infty} I^{(n)}=0$ in probability and similar statements for $II$ and $III$. Let us start with  $I^{(n)}$.
For $M\in \R_{>0}$ write $I^{(n)}_{\leq M}$ and $I^{(n)}_{> M}$ for the above integral restricted to $X\leq M$ or $X>M$.
For any $M$,
$$
I^{(n)}_{\leq M} \leq   M\sup_{X\leq M}|\ZZ^{(n)}(X)-\ZZ(X)| \sup_{X\leq M} |\ZZ^{(n)}_{\mu,\beta}(0,X;T,Y)|.
$$
By the almost sure sup-norm convergence on compacts (by the Skorohod representation theorem) it follows that almost surely $\lim_{n\to \infty} \sup_{X\leq M}|\ZZ^{(n)}(X)-\ZZ(X)|=0$ and likewise \break $\lim_{n\to \infty} \sup_{X\leq M} |\ZZ^{(n)}_{\mu,\beta}(0,X;T,Y)|<\infty$. Thus,  almost surely $\lim_{n\to \infty}I^{(n)}_{\leq M}=0$. This holds on the same full-measure set for all $M'<M$ and hence the event that $\lim_{n\to \infty} I^{(n)}_{\leq M}=0$ for all $M\in \R_{>0}$ holds almost surely, and in particular this limit holds in probability. This outcome is not the same as what we wish to show, that $\lim_{n\to \infty}I^{(n)}=0$ in probability. This is because we have not yet controlled the potential contribution from the tails $I^{(n)}_{> M}$. We do this now.

Fix $\epsilon>0$. We first claim that for any $\eta>0$, we may find some $M$ such that $\mathbb P(I^{(n)}_{>M}>\epsilon)\leq \eta/2$, uniformly over $n$. This follows from Markov inequality and the fact that $\EE[I^{(n)}_{>M}]$ can be made arbitrarily small for $M$ large enough, uniformly in $n$. Indeed, by Cauchy-Schwarz
$$
\EE[I^{(n)}_{>M}] \leq \varnint dX \mathbf{1}_{X>M}  \EE[|\ZZ^{(n)}(X)-\ZZ(X)|^2]^{1/2} \cdot \EE[|\ZZ^{(n)}_{\mu,\beta}(0,X;T,Y)|^2]^{1/2}.
$$
The term  $\EE[|\ZZ^{(n)}(X)-\ZZ(X)|^2]^{1/2}$ is controlled by the assumption \eqref{eq:expgrowthinitial} and \break $\EE[|\ZZ^{(n)}_{\mu,\beta}(0,X;T,Y)|^2]^{1/2}$ via the following string of inequalities:
\begin{multline}\label{eq:string}
	\EE[|\ZZ^{(n)}_{\mu,\beta}(0,X;T,Y)|^2]^{1/2} \leq \sum_{k=0}^{nT} (\sigma \beta)^k ||\PC^{(n)}_{\mu}(0,X;\cdot,\cdot;T,Y)\cdot 2^{\mathbf{1}_{Y=0}}||_{L^2(0,T,k)}\\
	\leq \frac{1}{T}\sum_{k=0}^{nT} \frac{(T^{1/2}C\sigma \beta)^k}{\Gamma((k+1)/2)}  e^{-\frac{(X-Y)^2}{C \max(T,2k/n)}}
	\leq \frac{1}{T}\sum_{k=0}^{\infty} \frac{(T^{1/2}C\sigma \beta)^k}{\Gamma((k+1)/2)}  e^{-\frac{(X-Y)^2}{2CT}}\leq C' e^{-\frac{(X-Y)^2}{2CT}}.
\end{multline}
The first inequality follows by combining the $L^2$ triangle inequality with Lemma \ref{lem:akq41}, the second inequality from the bound \eqref{eq:xuan1}, the third  inequality by noting that for $k\in [0,nT]$, $\max(T,2k/n)\leq 2T$ and then extending the summation to infinity, and the final inequality by bounding the summation by a constant $C'$ that depends on $T$.
Thus, we have
\begin{equation*}
	\EE[I^{(n)}_{>M}] \leq \varnint dX \mathbf{1}_{X>M}  \EE[|\ZZ^{(n)}(X)-\ZZ(X)|^2]^{1/2} \cdot C' e^{-\frac{(X-Y)^2}{2CT}} \\ \leq  \varnint dX \mathbf{1}_{X>M} C'' e^{aX/2}e^{-\frac{(X-Y)^2}{2CT}}
\end{equation*}
where we have used assumptions \eqref{eq:expgrowthinitial} and \eqref{eq:initdatabdd} for the final inequality (and $a$ is chosen to satisfy that assumption). Owing to the Gaussian decay, it is easy to see that for $M$ large enough, the final (discrete) integral can be made arbitrarily small, uniformly in $n$. 

Thus, for any $\eta>0$, we have found some $M$ such that $\mathbb P(I^{(n)}_{>M}>\epsilon)\leq \eta/2$, uniformly over $n$. Then, using the almost sure convergence of $I^{(n)}_{\leq M}$, we may find some $n_0$ such that for $n\geq n_0$, $\mathbb P(I^{(n)}_{\leq M}>\epsilon)\leq \eta/2 $. Hence, for $n\geq n_0$, we have that $\mathbb P(I^{(n)}>\epsilon)\leq \eta$. Since this holds for any $\eta$, it  shows that for all $\epsilon>0$, $\mathbb P(I^{(n)}>\epsilon)$ goes to zero as $n$ goes to infinity, i.e. $\lim_{n\to\infty} I^{(n)}=0$ in probability.

One can similarly show that $\lim_{n\to\infty} II^{(n)}=0$ in probability. Indeed, $\lim_{n\to\infty} II^{(n)}_{<M}=0$ almost surely using the fact that  $\lim_{n\to \infty} \sup_{X\leq M}|\ZZ^{(n)}_{\mu,\beta}(0,X;T,Y)-\ZZ_{\mu,\beta}(0,X;T,Y)|=0$ and  $\lim_{n\to \infty} \sup_{X\leq M} |\ZZ(X)|<\infty$. We may control  $II^{(n)}_{>M}$ uniformly in $n$ through  Markov inequality and the same argument as above,  using the the bound \eqref{eq:xuan2} and the assumption \eqref{eq:initdatabdd}. 

Finally, we have that $\lim_{n\to \infty} III^{(n)}=0$ almost surely, and in particular in probability,  since Riemann sums of continuous integrable functions converge to their integral. 

%
%
%
%
%
%
%


This completes the proof of part (2) in the case of $\ZZ^{(n)}(X) = z^{\tinyvert (n)}(Xn^{1/2})$ and $\ZZ^{(n)}(T,Y)$ given by \eqref{eq:ZZnvert}.

Finally, let us briefly address the final paragraph in part (2). Now we are given $\big(z^{\tinydiagup }(x)\big)_{x\in \Z_{\geq 0}}$ and define $\ZZ^{(n)}(X) = z^{\tinydiagup }(n^{1/2} X)$ for $X\in \R$ with $Xn^{1/2}\in \Z_{\geq 0}$ and extend to other $X$ by linear interpolation. We claim that $\ZZ^{(n)}_{\mu,\beta}(\cdot,\cdot)\Rightarrow 2 \ZZ_{\mu,\beta}(\cdot,\cdot)$ (recall \eqref{eq:ZZndiag}). To see this, we rewrite
\begin{align}\label{eq:Znnndiag}
	\ZZ^{(n)}_{\mu,\beta}(T,Y) &= 2n^{-1/2} \sum_{X\in n^{-1/2} \Z_{\geq 0}}\!\!\!\!\!\!  \ZZ^{(n)}(X) \ZZ^{(n)}_{\mu,\beta}(n^{-1/2} X,X;T,Y)  \\
	\nonumber &= 2 \varnint dX \ZZ^{(n)}(X) \ZZ^{(n)}_{\mu,\beta}(n^{-1/2} X,X;T,Y)
\end{align}
where, since the summations are over $ X\in n^{-1/2} \Z_{\geq 0}$ now (see Fig. \ref{fig:Zinitialdata} right),  the integral symbol $\varnint dX$ is redefined to be a  shorthand for $n^{-1/2}  \sum_{X\in n^{-1/2} \Z_{\geq 0}}$. The argument to match this to
$2\ZZ_{\mu,\beta}(T,Y)= 2\int dX \ZZ(X) \ZZ_{\mu,\beta}(0,X;T,Y)$ now proceeds quite similarly to the proof above, so we do not repeat it. The presence of the $n^{-1/2}X$ factor in place of $0$ in the first slot of $ \ZZ^{(n)}_{\mu,\beta}$ has no effect in the limit because the convergence of $\ZZ^{(n)}_{\cdot,\cdot}(\cdot,\cdot;\cdot,\cdot)$ to $\ZZ_{\cdot,\cdot}(\cdot,\cdot;\cdot,\cdot)$ is in $\Cfive$. The completes the proof of part (2) of the theorem.

\medskip

\noindent{\bf Part (3).}
Let us start with the sheet convergence. The idea is to compare the partition function with random boundaries to that with deterministic boundaries. Let us introduce some useful shorthand notation. Since the $\left( \X^{(n), \mu}(r)\right)_{r\in \mathbb Z}$ are now i.i.d., we introduce $\murand(r)$, defined by
$$
\X^{(n), \mu}(r) = 1 - n^{-1/2} \murand(r)
$$
and write $\mu^{(n)}= \EE[\murand(r)]$ where $\mu^{(n)}\to \mu$. We let $\ZZ^{(n)}:=\ZZ^{(n)}_{\mu^{(n)},\beta}(S,X;T,Y)$ denote the partition function defined in \eqref{eq:ZZnmuSXTY} with deterministic boundary with parameters $\mu^{(n)}$, and let $\hat{\ZZ}^{(n)}:=\ZZ^{(n)}_{\murand,\beta}(S,X;T,Y)$  denote the partition function with i.i.d. boundary with parameters $\murand(r)$. Similarly, let $\PC^{(n)}(\cdot;\cdot):=\PC^{(n)}_{\mu^{(n)}}(S,X;\cdot,\cdot;T,Y)2^{\mathbf{1}_{Y=0}}$ denote the transition kernel with deterministic boundary with parameter $\mu^{(n)}$ defined in \eqref{eq:Pcdef}, and let $\hat{\PC}^{(n)}(\cdot;\cdot):=\PC^{(n)}_{\murand}(S,X;\cdot,\cdot;T,Y)\cdot 2^{\mathbf{1}_{Y=0}}$ denote the transition kernel with random boundary with parameters $\murand(r)$.

We will now prove that 
\begin{equation}\label{eq:varzero}
	\var(\hat{\ZZ}^{(n)}-\ZZ^{(n)}) \to 0 \quad \textrm{as } n\to\infty.
\end{equation}
Since this holds for any choices of $(\beta,S,X,T,Y)$, this implies the claimed finite dimensional convergence of $\ZZ^{(n)}_{\murand,\beta}(S,X;T,Y)$ to the same limit as that of $\ZZ^{(n)}_{\mu^{(n)},\beta}(S,X;T,Y)$, namely to $\ZZ_{\mu,\beta}(S,X;T,Y)$.

To prove \eqref{eq:varzero} first note that $\EE[\hat{\ZZ}^{(n)}]=\EE[\ZZ^{(n)}]$ (as the bulk weights are mean zero and $\EE[\murand] =\mu^{(n)}$). Thus $\var(\hat{\ZZ}^{(n)}-\ZZ^{(n)}) = \EE\big[\big(\hat{\ZZ}^{(n)}-\ZZ^{(n)}\big)^2\big]$. As the boundary and bulk weights are independent, we can write the overall expectation $\EE[\cdot] = \EE^{\rm bdry}[\EE^{\rm bulk}[\cdot]]$ where $\EE^{\rm bdry}$ is the expectation over the boundary weights and $\EE^{\rm bulk}$ over the bulk weights. Recalling \eqref{eq:ZZndef}, we see that
\begin{align}
	\nonumber \EE\!\!\left[\!\big(\hat{\ZZ}^{(n)}-\ZZ^{(n)}\big)^2\right]^{1/2} \!\!\!\! &\leq \!\!\! \sum_{k=0}^{n(T-S)} \!\! \!\!\!\EE^{\rm bdry}\!\!\left[\EE^{\rm bulk}\!\left[\left(n^{-3k/4} S^{(n)}_k( \beta^k (\hat\PC^{(n)}-\PC^{(n)})(\cdot,\cdot);\boldsymbol{\omega}^{(n)})\right)^2\right]\right]^{1/2}\\
	&\leq \sum_{k=0}^{n(T-S)}(\sigma \beta)^{k} \left( \EE^{\rm bdry}\left[ ||(\hat\PC^{(n)}-\PC^{(n)})(\cdot,\cdot)||^2_{L^2(S,T,k)}\right]\right)^{1/2}\label{eq:ZZhat2}\\
	\nonumber& =\sum_{k=0}^{n(T-S)}(\sigma \beta)^{k} \left( \int_{[S,T]^k}\!\!\!d\vec{R}\int_{\R_{\geq 0}^k}\!\!\!d\vec{W}\, \var^{\rm bdry}\left(\hat\PC^{(n)}(\vec{R},\vec{W})\right)\right)^{1/2}
\end{align}
The first inequality above uses the triangle inequality and the definition of $\hat\ZZ^{(n)}$ and $\ZZ^{(n)}$; the second inequality uses Lemma \ref{lem:akq41} to bound the $\EE^{\rm bulk}$ expectation; the final inequality writes the $L^2(S,T,k)$ norm explicitly as integrals, then interchanges the expectation with the integrals (by Tonelli's theorem since the integrand is non-negative), and finally rewrites the resulting expectation as a variance (since $\EE[\murand]=\mu^{(n)}$ and hence $\EE[\hat\PC^{(n)}]=\PC^{(n)}$).

In order to bound $\var^{\rm bdry}\left(\hat\PC^{(n)}(\vec{R},\vec{W})\right)$ we make use of the following result.
\begin{lemma}\label{lem:xuan53}
	For any finite interval $I\subset \R$, $\epsilon\in (0,1]$ and $C\geq 1$, if $\var\big(\murand(r)\big) \leq C n^{1-\epsilon}$ for all $n\in \Z_{\geq 1}$ then there exists a sequence $\{c^{(n)}\}_{n\in \Z_{\geq 1}}$ of non-negative reals such that $c^{(n)}\to 0$ as $n\to \infty$ and such that for any $k\in \Z_{\geq 0}$, $S,T\in I$ with $S<T$, $X,Y\in \R_{\geq 0}$ and $\vec{R},\vec{W}\in \R_{\geq 0}^k$ we have
	\begin{equation}\label{eq:Xuan53}
		\var^{\rm bdry}\left(\PC^{(n)}_{\murand}(S,X;\vec{R},\vec{W};T,Y)\right) \leq \big((1+c^{(n)})^{k+1}-1\big) \left(\PC^{(n)}_{\mu^{(n)}}(S,X;\vec{R},\vec{W};T,Y)\right)^2.
	\end{equation}
	
\end{lemma}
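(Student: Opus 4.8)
The plan is to exploit the product structure of the multi-point kernel together with the independence of the boundary weights over disjoint time-intervals, reducing \eqref{eq:Xuan53} to a single-interval variance estimate; it suffices to treat lattice points $(\vec R,\vec W)$ since both sides are piecewise-constant extensions. For $\vec R$ with $n\vec R\in D_k(nS,nT)$ — the only case in which either side of \eqref{eq:Xuan53} is nonzero — one has, writing $R_0=S$, $R_{k+1}=T$, $W_0=X$, $W_{k+1}=Y$,
\[
\PC^{(n)}_{\murand}(S,X;\vec R,\vec W;T,Y)=\prod_{j=1}^{k+1}\hat q_j,\qquad \hat q_j:=\tfrac{n^{1/2}}{2}\, p_{\boldsymbol{\X}}\big(nR_{j-1},n^{1/2}W_{j-1};nR_j,n^{1/2}W_j\big),
\]
where $\boldsymbol{\X}$ has entries $\X(i)=1-n^{-1/2}\murand(i)$, so $\hat q_j$ depends on the boundary weights only through $\big(\murand(i)\big)_{i\in[nR_{j-1},nR_j)}$. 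As these index sets are pairwise disjoint and the $\murand(i)$ are i.i.d., the $\hat q_j$ are independent; and since the boundary expectation factorizes over sites with each weight contributing its mean $1-n^{-1/2}\mu^{(n)}$, one gets $\EE^{\rm bdry}[\hat q_j]=:q_j$ with $\prod_j q_j=\PC^{(n)}_{\mu^{(n)}}(S,X;\vec R,\vec W;T,Y)$. The identity $\var\big(\prod_j\hat q_j\big)=\prod_j(q_j^2+\var\hat q_j)-\prod_j q_j^2$ for independent factors then shows that \eqref{eq:Xuan53} follows at once from a single-interval bound $\var^{\rm bdry}(\hat q_j)\leq c^{(n)}q_j^2$, valid for every $j$ with a sequence $c^{(n)}\to 0$ not depending on the data.

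For the single-interval bound, set $\bar a=n^{-1/2}\mu^{(n)}$ and $\rho=n^{-1}\var(\murand)/(1-\bar a)^2$, so $\rho\leq Cn^{-\epsilon}$ for $n$ large (using $\var(\murand)\leq Cn^{1-\epsilon}$ and $\mu^{(n)}\to\mu$). Expanding the variance of $p_{\boldsymbol{\X}}(s,x;t,y)$ over pairs of reflected-random-walk paths $\SSS,\SSS'$ from $(s,x)$ to $(t,y)$ and factorizing the boundary expectation over sites — a site where both paths sit at $0$ contributes $\EE[\X^2]=(1-\bar a)^2(1+\rho)$, a site where exactly one does contributes $1-\bar a$ — one obtains the identity
\[
\frac{\var^{\rm bdry}\big(p_{\boldsymbol{\X}}(s,x;t,y)\big)}{\big(p_{\bar a}(s,x;t,y)\big)^2}=\EE_{\nu\otimes\nu}\!\Big[(1+\rho)^{L(\SSS\wedge\SSS')}-1\Big]\leq\EE_{\nu\otimes\nu}\!\big[e^{\rho\,L(\SSS\wedge\SSS')}-1\big],
\]
where $\nu$ is the law on reflected-walk bridges from $(s,x)$ to $(t,y)$ reweighted by $(1-\bar a)^{L(\SSS)}$, $L(\SSS)$ counts the times in $[s,t)$ at which $\SSS$ hits $0$, and $L(\SSS\wedge\SSS')$ those at which $\SSS$ and $\SSS'$ both hit $0$. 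The prefactors $\tfrac{n^{1/2}}{2}$ cancel in $\var^{\rm bdry}(\hat q_j)/q_j^2$, so it remains only to bound the right-hand side by $o(1)$ uniformly in the endpoints and in $t-s\leq n|I|$.

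The heart of the matter is this overlap estimate. Bounding $e^{\rho L}-1\leq\rho L e^{\rho L}$ and using Cauchy–Schwarz reduces it to showing that, under $\nu\otimes\nu$ and uniformly in $n$ and in the endpoints, $\EE[L(\SSS\wedge\SSS')^2]=O\big((\log(t-s))^2\big)$ and $\EE[e^{2\rho L(\SSS\wedge\SSS')}]=O(\log(t-s))$; this gives $\var^{\rm bdry}(\hat q_j)/q_j^2=O\big(n^{-\epsilon}(\log n)^{3/2}\big)$, and one takes $c^{(n)}$ to be this quantity. The mechanism is that $(\SSS_i,\SSS'_i)$ is a random walk reflected in the quadrant $\Z_{\geq0}^2$ — equivalently two independent reflected walks, so that $(\SSS_i,\SSS'_i)=(0,0)$ precisely when an underlying two-dimensional simple random walk is at the origin: such a walk is recurrent, the number of its returns to $(0,0)$ before time $N:=t-s$ has expectation of order $\log N$, and, by a renewal argument on the successive common returns whose inter-return gaps are heavy-tailed with tail of order $1/\log(\cdot)$, this count has exponential tails at rate of order $1/\log N$, which yields the two displayed estimates.

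Transferring these facts from the plain walk to $\nu\otimes\nu$ is where the technical care goes, and I expect it to be the main obstacle. The reweighting by $(1-\bar a)^{L(\SSS)}$ is mild because $|\log(1-\bar a)|=O(n^{-1/2})$ while $L(\SSS)\leq n|I|$, so on the relevant scale $L(\SSS)\asymp n^{1/2}$ it perturbs the single-walk law only by $O(1)$ factors; and conditioning to a bridge with arbitrary endpoints only decreases $L(\SSS\wedge\SSS')$ and is handled by the reflected local-central-limit estimates already used above (in the spirit of \cite[Appendix A]{wu2018intermediate} and of the proof of Lemma \ref{lem:xuan62}). The one point that genuinely requires attention is the uniformity of all these bounds over the extremal geometry $x=y=0$ with $t-s$ of order $n$; away from it the common local time is smaller and the estimates are easier. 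The remainder is bookkeeping: one sets, say, $c^{(n)}:=C'n^{-\epsilon}(\log n)^{3/2}$, checks $c^{(n)}\to0$, and concludes via the product-variance identity of the first paragraph.
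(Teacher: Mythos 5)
Your reduction from general $k$ to $k=0$ is precisely the paper's: factor $\PC^{(n)}_{\murand}(S,X;\vec R,\vec W;T,Y)$ over the $k+1$ time-slabs, observe that the factors are independent because they depend on disjoint blocks of boundary weights, and then apply the independent-product variance identity (your $\prod_j(q_j^2+\var\hat q_j)-\prod_j q_j^2$ is exactly the sum-over-nonempty-subsets formula in the paper). This step is correct, and once one has a single-slab bound $\var^{\rm bdry}(\hat q_j)\leq c^{(n)}q_j^2$, the general inequality follows exactly as you (and the paper) say.

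Where you diverge is the single-slab ($k=0$) estimate. The paper does not re-derive it: it quotes \cite[Lemma 5.3]{wu2018intermediate}, which returns the bound with a slightly different comparison kernel $\PC^{(n)}_{\tilde\mu^{(n)}}$ where $\tilde\mu^{(n)}=\min\{0,2\mu^{(n)}-\mu^{(n)}n^{-1/2}\}$, and then uses monotonicity of $\PC^{(n)}_{\mu}$ in $\mu$ to replace $\tilde\mu^{(n)}$ by $\mu^{(n)}$. Your approach instead derives the variance ratio directly as $\EE_{\nu\otimes\nu}[(1+\rho)^{L(\SSS\wedge\SSS')}-1]$; the identity is correct and it has the pleasant feature that the comparison kernel comes out as $p_{\bar a}$ (i.e.\ $\PC^{(n)}_{\mu^{(n)}}$) from the start, so you avoid the $\tilde\mu^{(n)}$ detour entirely. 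That is a genuine simplification in spirit, and it is in fact close to what \cite{wu2018intermediate} itself does internally.

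However, the parts you flag as ``where the technical care goes'' are exactly where a gap remains, and one step is stated in a way that I do not believe. You claim that ``conditioning to a bridge with arbitrary endpoints only decreases $L(\SSS\wedge\SSS')$.'' This is not true in the form you need it: a bridge conditioned to return to the boundary (e.g.\ $x=y=0$, which you identify as the extremal geometry) has \emph{more} common local time at the origin than a free reflected walk, not less, and that is precisely the case your argument must control uniformly. You would need, instead, a uniform-in-endpoints upper bound on the $\nu\otimes\nu$-Laplace transform of $L(\SSS\wedge\SSS')$ obtained from reflected bridge local-limit estimates, together with control of the $(1-\bar a)^{L(\SSS)}$ tilt (your $O(1)$ heuristic for the tilt is a statement about typical paths, not about tails, and the variance involves the tail). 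These are exactly the content of \cite[Lemma 5.3]{wu2018intermediate}; until they are established, the single-slab bound is asserted rather than proved. So: same top-level structure as the paper, a different and somewhat cleaner route for the $k=0$ ingredient, but with one incorrect monotonicity claim and several acknowledged unproved estimates at the heart of that ingredient.
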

\begin{proof} We start with the case $k=0$. We may estimate the second moment of $\PC^{(n)}_{\murand}(S,X;T,Y)$, using results on the coincidence local time of couples of random walks. This is done in \cite[Lemma 5.3]{wu2018intermediate}, which,  rewritten in our notation states that
	\begin{equation}\label{eq:Xuan53k0}
		\var^{\rm bdry}\left(\PC^{(n)}_{\murand}(S,X;T,Y)\right) \leq c^{(n)} \left(\PC^{(n)}_{\tilde\mu^{(n)}}(S,X;T,Y)\right)^2
	\end{equation}
	where $\tilde\mu^{(n)} = \min\{0,2\mu^{(n)}-\mu^{(n)} n^{-1/2}\}$. Notice that $\tilde\mu^{(n)}  \geq \mu^{(n)}$. From the definition of $\PC^{(n)}_{\mu}(S,X;T,Y)$
	in terms of $p_{\boldsymbol{\X}}(s,x;t,y)$ from \eqref{eq:pxsxty} it is clear that for $\mu\geq \nu$, $\PC^{(n)}_{\mu}(\cdot,\cdot;\cdot,\cdot)\leq \PC^{(n)}_{\nu}(\cdot,\cdot;\cdot,\cdot)$.
	Thus $\PC^{(n)}_{\tilde\mu^{(n)}}(S,X;T,Y)\leq \PC^{(n)}_{\mu^{(n)}}(S,X;T,Y)$ which, combined with \eqref{eq:Xuan53k0} implies the $k=0$ case of \eqref{eq:Xuan53}. To prove the general result we use the following: For $U_1,\ldots, U_{k+1}$ independent  with finite variance,
	\begin{align}\label{eq:uprods}
		\var\left(\prod_{i=1}^{k+1} U_i\right)=
		\sum_{\substack{S\subset \{1,\ldots, k+1\}\\S\neq \emptyset }} \prod_{i\in S} \var(U_i) \prod_{i\notin S} (\EE[U_i])^2.
	\end{align}
	For $i\in \{1,\ldots, k+1\}$ assign $U_i = \PC^{(n)}_{\murand}(R_{i-1},W_{i-1};R_i,W_i)$ (recalling the convention $R_0=S, R_{k+1}=T, W_0=X$ and $W_{k+1}=Y$) and note that these are independent and that $\EE[U_i] = \PC^{(n)}_{\mu^{(n)}}(R_{i-1},W_{i-1};R_i,W_i)$. Applying the $k=0$ case of \eqref{eq:Xuan53} implies that $\var(U_i) \leq c^{(n)} (\EE[U_i])^2$. Combined with \eqref{eq:uprods} this implies \eqref{eq:Xuan53} for general $k$.
\end{proof}

Applying Lemma \ref{lem:xuan53} to the final line of \eqref{eq:ZZhat2} (with the variance condition verified by appealing to the second line of \eqref{eq:thmassumption1}) we arrive at
\begin{multline*}
	\EE\big[\big(\hat{\ZZ}^{(n)}-\ZZ^{(n)}\big)^2\big]^{1/2} \\ \leq \sum_{k=0}^{n(T-S)}(\sigma \beta)^{k} \big((1+c^{(n)})^{k+1}-1\big)^{1/2}
	||\PC^{(n)}_{\mu^{(n)}}(S,X;\cdot,\cdot;T,Y)\cdot 2^{\mathbf{1}_{Y=0}}||_{L^2(S,T,k)}.
\end{multline*}
Using \eqref{eq:xuan1}, along with the fact that  $\max(T-S,2k/n) \leq 2(T-S)$ for $k\in [0,n(T-S)]$, the above inequality yields
\begin{equation}\label{eq:ZZdecay}
	\EE\big[\big(\hat{\ZZ}^{(n)}-\ZZ^{(n)}\big)^2\big]^{1/2} \leq \sum_{k=0}^{n(T-S)}\frac{C^{k} \big((1+c^{(n)})^{k+1}-1\big)^{1/2}}{\sqrt{\Gamma((k+1)/2)}}
	(T-S)^{\frac{k}{4}-\frac{1}{2}}e^{-\frac{(X-Y)^2}{4C(T-S)}},
\end{equation}
for some suitably large constant $C>0$. This series is dominated for all $n$ by an infinite summable series (where $c^{(n)}$ is replaced by $\sup_n c^{(n)}<\infty$). As $n\to \infty$ each term goes to zero (since $(1+c^{(n)})^{k+1}-1\to 0$ due to $c^{(n)}\to 0$) and thus by the dominated convergence theorem, the entire summation goes to zero as well. This implies \eqref{eq:varzero} as desired to prove the random boundary version of the sheet convergence in finite dimensional distributions.

Turning to the random boundary extension of part (2), it suffices to show that
\begin{equation}\label{eq:varinitz}
	\var\left(\varnint dX \ZZ^{(n)}(X) \ZZ^{(n)}_{\murand,\beta}(0,X;T,Y) -\varnint dX \ZZ^{(n)}(X) \ZZ^{(n)}_{\mu^{(n)},\beta}(0,X;T,Y) \right)^{1/2}\to 0
\end{equation}
as  $n\to\infty$.
The argument inside of the variance on the left-hand side above is centered and hence the variance is the same as the expected value of the squared argument. Thus,
\begin{align*}
	\mathrm{LHS}\eqref{eq:varinitz} &\leq \varnint dX \EE\left[\big(\ZZ^{(n)}(X)\big)^2\left(\ZZ^{(n)}_{\murand,\beta}(0,X;T,Y)-\ZZ^{(n)}_{\mu^{(n)},\beta}(0,X;T,Y)\right)^2\right]^{1/2}\\
	&\leq \varnint dX \EE\!\!\left[\big(\ZZ^{(n)}(X)\big)^2\right]^{1/2} \!\!\!\EE\left[\!\left(\ZZ^{(n)}_{\murand,\beta}(0,X;T,Y)-\ZZ^{(n)}_{\mu^{(n)},\beta}(0,X;T,Y)\right)^2\right]^{1/2}\\
	&\leq C^{(n)}\varnint dX \EE\left[\big(\ZZ^{(n)}(X)\big)^2\right]^{1/2} e^{-\frac{(X-Y)^2}{2C(T-S)}}
\end{align*}
where $C^{(n)}>0$ goes to zero as $n\to \infty$. The first inequality above is from the triangle inequality, the second from the independence of the initial data from the sheet partition functions, and the final from substitution \eqref{eq:ZZdecay} along with the considerations in the paragraph after that equation. Observe that the summation in the final line of the above displayed equation is finite owing to the assumption \eqref{eq:expgrowthinitial} on the initial data. Since $C^{(n)}\to 0$ as $n\to \infty$, this implies \eqref{eq:varinitz} as desired. The extension of the result in the final paragraph in part (2) to random boundary weights proceeds in the same manner, and hence we do not repeat it here.

\section{Proof of Theorem \ref{prop:KPZconvergence}}
\label{sec:proofconvergence}
Recall from \eqref{eq:Znuv} that
$$e^{\HH^{(n)}_{u,v}(T,Y)}:= \sqrt{n}^{nT +n^{1/2} Y} \frac{\Zpolymer_{u,v;\alpha^{(n)}}^{\rm stat}\left(\frac{n T}{2}+n^{1/2}Y+2, \frac{n T}{2}+2 \right)}{\wweight_{1,1}\wweight_{2,2}}$$
where $\alpha^{(n)} =\frac{1}{2}+\sqrt{n}$ and $u,v\in \R$ with $u>v$ are fixed.

Since Definition \ref{def:SHEproper} defines the solution to \eqref{eq:KPZ} in terms of \ref{eq:SHEbeta}, to prove Theorem \ref{prop:KPZconvergence} we must show that $e^{\HH^{(n)}_{u,v}(T,Y)}$ converges in finite dimensional distributions in the $Y$ variable to $\ZZ_{\mu,1}(T,Y)$  where $\mu=u-1/2$ and $\beta=1$ with initial data $\ZZ_{\mu,1}(0,X) = e^{\HH_{u,v}(X)}$.

To do this, we will apply Theorem \ref{thm:SHEsheetlimit}, hence the task below is to show that our log-gamma polymer can be fit into the framework in Section \ref{sec:discreteconv}. This requires a bit of massaging since the weights of the log-gamma polymer are not of the form $1+\beta\omega$ and since the paths there are up-right as opposed to simple random walk paths. After this massaging, we arrive at a matching to the type of partition function denoted by $z^{\tinydiagup}_{\boldsymbol{\X},\boldsymbol{\omega},\beta,z^{\tinydiagup}}(t,y)$ from \eqref{eq:ztinydiag}. Note that the matching of the initial data along with necessary moment growth bounds was already addressed in Lemma \ref{prop:limitinitialcondition}.

Let us start by explaining how to fit the log-gamma polymer into the framework in Section \ref{sec:discreteconv}.
Let $\overline{\wweight} :=\EE[\wweight_{4,3}]$ denote the mean of a generic log-gamma polymer bulk weight, represented here by $\wweight_{4,3}$. Recall that $\wweight_{i,i}\sim \Gammainv(\alpha+u)$ for $i\in \Z_{\geq 3}$ and $\wweight_{i,j}\sim \Gammainv(2\alpha)$ for $i,j\in\Z_{\geq 3}$ with $i>j$. Thus, $\overline{\wweight} = (2\alpha-1)^{-1}$. Define
\begin{equation}\label{eq:tildezuvyunscaled}
	\widetilde{\Zpolymer}_{u,v;\alpha}(t,y) := \left(\frac{1}{2\overline\wweight}\right)^{2t+y} \frac{\Zpolymer^{\rm stat}_{u,v}(t+y+2,t+2)}{\wweight_{1,1}\wweight_{2,2}}
\end{equation}
and observe that by letting $\widetilde{z}_{u,v;\alpha}(x):=\widetilde{\Zpolymer}_{u,v;\alpha}(0,x+1)$ we have that for $t\in \Z_{\geq 1}$,
\begin{equation}\label{eq:tildezvar}
	\widetilde{\Zpolymer}_{u,v;\alpha}(t,y) = \sum_{x=0}^{t+y-1}  \widetilde{\Zpolymer}_{u,v;\alpha}(x) \widetilde{\Zpolymer}_{u;\alpha}(x+3,3;t+y+2,t+2).
\end{equation}
Here, for $a,b,a',b'\in \Z_{\geq 3}$ with $a>b$, $a'>b'$, $a'\geq a$ and $b'\geq b$ (i.e., $(a,b)$ and $(a',b')$ can be connected via an up-right path)
$$
\widetilde{\Zpolymer}_{u;\alpha}(a,b;a',b') :=  \left(\frac{1}{2\overline{\wweight}}\right)^{a'-a+b'-b+1} \Zpolymer_{u;\alpha}(a,b;a',b').
$$
where $\Zpolymer_{u;\alpha}(a,b;a',b')$ denotes the log-gamma partition function starting at $(a,b)$ and ending at $(a',b')$. The choices of $(a,b)$ and $(a',b')$ above are such that the weights encountered by this point to point partition function are all either i.i.d. bulk weights with the same law as $\wweight_{4,3}$ or i.i.d. boundary weights with the same law as $\wweight_{3,3}$. Since these weights only depend on $u$ and $\alpha$, we have only included these subscripts in  $\Zpolymer_{u;\alpha}(a,b;a',b')$. Note that in \eqref{eq:tildezvar}, $ \widetilde{\Zpolymer}_{u,v;\alpha}(x)$ and $\widetilde{\Zpolymer}_{u;\alpha}(x+3,3;t+y+2,t+2)$ are independent of each other since $ \widetilde{\Zpolymer}_{u,v;\alpha}(x)$ only involves weights in the first two rows of the log-gamma polymer whereas $\widetilde{\Zpolymer}_{u;\alpha}(x+3,3;t+y+2,t+2)$ only involves weights in the third and higher rows.

Observe that we can rewrite
$$
\tilde\Zpolymer_{u;\alpha}(a,b;a',b') = \frac{1}{2} \cdot \left(\frac{\wweight_{a'b'}}{\overline{\wweight} 2^{\mathbf{1}_{a'=b'}}}\right)\cdot  2^{\mathbf{1}_{a'=b'}} \!\!\!\!\!\!\!\!\!\!\!\sum_{\pi:(a,b)\to(a',b')} 2^{-|\{(i,j)\in \pi^*:i\neq j\}|}\!\!\!\!\!\!\! \prod_{(i,j)\in \pi^*, i\neq j} \frac{\wweight_{i,j}}{\overline{\wweight}} \!\!\!\prod_{(i,i)\in \pi^*} \frac{\wweight_{i,i}}{2\overline{\wweight}}
$$
where the sum is over all up-right lattice paths and $\pi^*$ represents the set of all lattice points in $\pi$ except the terminal one at $(a',b')$.

Now comes the key distributional identity to make our matching with the notation of  Section \ref{sec:discreteconv}.
As a process in $x\in \Z_{\geq 0}$, $t\in\Z_{\geq 1}$ and $y\in \Z_{\geq 0}$,
$$
\Zpolymer_{u;\alpha}(x+3,3;t+y+2,t+2) \overset{(d)}{=}  \frac{1}{2} \cdot \left(\frac{\omega(2t+y-2,y)}{\overline{\wweight} 2^{\mathbf{1}_{y=0}}}\right)\cdot  2^{\mathbf{1}_{y=0}}  z_{\boldsymbol{\X},\boldsymbol{\omega},\beta}(x,x;2t+y-2,y),
$$
provided that $\boldsymbol{\X},\boldsymbol{\omega}$ and $\beta$ satisfy that the $\X(r)$ are all i.i.d. and likewise the $\omega(r,w)$ are all i.i.d. with $\beta\in \R_{>0}$ chosen so that
\begin{gather*}
	1+\beta \omega(r,w)  \overset{(d)}{=} \frac{\wweight_{4,3}}{\bar{\wweight}}\sim (2\alpha-1) \Gammainv(2\alpha),\\
	\X(r)  \overset{(d)}{=} \frac{\wweight_{3,3}}{2\overline{\wweight}}\sim \frac{2\alpha-1}{2} \Gammainv(\alpha+u) .
\end{gather*}
This matching relies on the fact that the factor $2^{-|\{(i,j)\in \pi^*:i\neq j\}|}$ exactly matches the weighting of paths that arises in defining the reflected random walk measure. The weight corresponding to the endpoint has been left out above to make the match with our earlier notation as well.

The above matching along with \eqref{eq:tildezvar} means that, as a process in $t\in \Z_{\geq 1}$ and $y\in \Z_{\geq 0}$,
\begin{multline}\label{eq:matchingZs}
	\widetilde{\Zpolymer}_{u,v;\alpha}(t,y)   \\ \overset{(d)}{=} \frac{2^{\mathbf{1}_{Y=0}}}{2} \cdot \left(\mathbf{1}_{y=0}\X(2t+y-2) + \mathbf{1}_{y>0}  \big(1+\beta \omega(2t+y-2,y)\big) \right)\cdot  z^{\tinydiagup}_{\boldsymbol{\X},\boldsymbol{\omega},\beta,z^{\tinydiagup}}(2t+y-2,y)
\end{multline}
where the initial data $z^{\tinydiagup}(\cdot)=\widetilde{z}_{u,v;\alpha}(\cdot)$ is independent of the boundary and bulk weights $\boldsymbol{\X},\boldsymbol{\omega}$.

Returning to $e^{\HH^{(n)}_{u,v}(T,Y)}$, observe that for $T,Y$ such that $nT/2,n^{1/2}Y\in \Z_{\geq 0}$ we have
$$
e^{\HH^{(n)}_{u,v}(T,Y)} = \widetilde{\Zpolymer}_{u,v;\alpha^{(n)}}(nT/2,n^{1/2}y)
$$
where $\alpha^{(n)}=n^{1/2} + 1/2$. For other values of $T,Y$, we defined $e^{\HH^{(n)}_{u,v}(T,Y)}$ by interpolating. It is easy to see that under this scaling of $\alpha^{(n)}$, the term
$$\left(\mathbf{1}_{y=0}\X(2t+y-2) + \mathbf{1}_{y>0}  \big(1+\beta \omega(2t+y-2,y)\big) \right)$$ in \eqref{eq:matchingZs} converges to $1$ in probability as $n\to \infty$. Therefore,  in light of \eqref{eq:matchingZs}, if we can show
\begin{equation}\label{eq:lefttodo}
	\frac{2^{\mathbf{1}_{Y=0}}}{2} z^{\tinydiagup}_{\boldsymbol{\X},\boldsymbol{\omega},\beta,z^{\tinydiagup}}(nT+n^{1/2}Y-2,n^{1/2}Y)\Rightarrow \ZZ_{\mu,1}(T,Y)
\end{equation}
(where $\ZZ_{\mu,1}(T,Y)$ solves \eqref{eq:SHEbeta} with $\beta=1$ and initial data $\ZZ_{\mu,1}(0,X) = e^{\HH_{u,v}(X)}$)
in the sense of  finite dimensional distribution, then it will immediately follow that \break  $e^{\HH^{(n)}_{u,v}(T,Y)}\Rightarrow \ZZ_{\mu,1}(T,Y)$ as well, thus proving the theorem.

In order to prove \eqref{eq:lefttodo}, we will use the convergence of partition function with initial data and random boundary weights that is provided to us by part (3) of Theorem \ref{thm:SHEsheetlimit}. In order to do so, we must verify a number of conditions on the convergence of the initial data, as well as properties of the bulk weights, inverse temperature, and boundary weights.

For the initial data, we must show that $\ZZ^{(n)}(X):= z^{\tinydiagup(n)}(n^{1/2} X)$ (where the $(n)$ signifies the dependence on the scaling with $n$) converges as a process to $e^{\HH_{u,v}(X)}$ and satisfies \eqref{eq:expgrowthinitial}, i.e., that for some $a>0$, $\sup_{n\in \Z_{\geq 1}}\sup_{X\in \R_{\geq 0}} e^{-aX} \EE[\ZZ^{(n)}(X)^2] <\infty$. Both of these properties of the initial data were shown earlier in Lemma \ref{prop:limitinitialcondition}.

For the bulk  weights we must verify \eqref{eq:thmassumption2}. Above, we explained that $1+\beta \omega(r,w)\sim (2\alpha-1) \Gammainv(2\alpha)$ and with $\alpha=\alpha^{(n)}$ this implies  $$1+\beta^{(n)} \omega^{(n)}(r,w)\sim 2n^{1/2}  \Gammainv(2n^{1/2} +1).$$ Now, letting $\beta^{(n)}= n^{-1/4}/\sqrt{2}$ (which leads to taking $\beta=1$ in \eqref{eq:SHEbeta}) it is easily checked via \eqref{eq:inversegammamoments} that
$\EE[1+\beta  \omega^{(n)}(r,w)] = 1$ and hence $\EE[\omega^{(n)}(r,w)]=0$. Similarly we see that $\var\big(\beta^{(n)}\omega^{(n)}(r,w)\big) = 4n \var\big( \Gammainv(2n^{1/2} +1)\big)$ from which it follows that  $\var\big(\omega^{(n)}(r,w)\big) = \frac{2n^{1/2}}{2n^{1/2}-1}$ which converges to $1$ as needed. To establish the 8th moment bound, we compute the $N$th moment of $\omega^{(n)}(r,w)$ using \eqref{eq:inversegammamoments} as 
\begin{align*} 
	\EE\left[\omega^{(n)}(r,w)^N\right] &= \EE\left[ \left((\sqrt{2}n^{1/4}) (2n^{1/2}\Gammainv(2n^{1/2} +1)-1) \right)^N\right]\\ &=  (2\sqrt{n})^{N/2} \sum_{k=0}^N \binom{N}{k}(-1)^{N-k} \frac{(2\sqrt{n})^k}{2\sqrt{n}(2\sqrt{n}-1)\dots (2\sqrt{n}-k+1)}\\
\end{align*}
It may be shown that for any fixed $N$, this quantity behaves as $C_N+O(1/\sqrt{n})$
as $n$ goes to infinity, where $C_N=(N-1)!!$ is the $N$th moment of a standard Gaussian. To justify it, one may express the moments in terms of the cumulants, and use the fact that the cumulant generating function admits a simple expression in terms of Gamma functions. Details of this argument may be found in \cite[Corollary 2.5]{krishnan2018tracy}, where it is proved in particular that the moments of $\omega^{(n)}(r,w)$ are uniformly bounded \cite[Eq. (18)]{krishnan2018tracy}. Thus, assumption \eqref{eq:thmassumption2} holds holds. 


For the boundary weights, we must verify \eqref{eq:thmassumption1}. Observe that from \eqref{eq:inversegammamoments},
$$\EE[\X^{(n),\mu}(r)] = \frac{n^{1/2}}{n^{1/2} + \mu} = 1-n^{-1/2}\mu + O(n^{-1}),$$ thus implying that
$\lim_{n\to \infty} \EE\big[n^{1/2}\big(1-\X^{(n),\mu}(r)\big)\big]= \mu$. Similarly, observe that  by \eqref{eq:inversegammamoments},
$$\var\big(\X^{(n),\mu}(r)\big) = \frac{n}{(n^{1/2} +\mu)^2(n^{1/2} +\mu-1)} = O(n^{-1/2})$$ and hence there exists and $\epsilon\in (0,1]$ and $K>0$ such that for all $n\in \Z_{\geq 1}$, $\var\big(\X^{(n),\mu}\big) \leq K n^{-\epsilon}$. Thus, we have verified all of the conditions necessary to apply  part (3) of Theorem \ref{thm:SHEsheetlimit}. This implies \eqref{eq:lefttodo} and completes our proof.

\renewcommand{\emph}[1]{\textit{#1}}
\bibliography{mainbiblio}

\begin{thebibliography}{DNKLDT20}

\bibitem[Agg18]{aggarwal2016current}
A.~Aggarwal.
\newblock Current fluctuations of the stationary {ASEP} and six-vertex model.
\newblock {\em Duke Math. J.}, 167(2):269--384, 2018.

\bibitem[AB19]{aggarwal2016phase}
A.~Aggarwal and A.~Borodin.
\newblock Phase transitions in the {ASEP} and stochastic six-vertex model.
\newblock {\em Ann. Probab.}, 47(2):613--689, 2019.

\bibitem[AKQ14]{alberts2014intermediate}
T.~Alberts, K.~Khanin, and J.~Quastel.
\newblock The intermediate disorder regime for directed polymers in dimension
  $1+ 1$.
\newblock {\em Ann. Probab.}, 42(3):1212--1256, 2014.

\bibitem[AS06]{10.1214/105051606000000015}
K.~S. Alexander and V.~Sidoravicius.
\newblock {Pinning of polymers and interfaces by random potentials}.
\newblock {\em Ann. Appl. Probab.}, 16(2):636 -- 669, 2006.

\bibitem[ABO21]{arista2021matsumoto}
J.~Arista, E.~Bisi, and O'Connell.
\newblock {Matsumoto-Yor} and {Dufresne} type theorems for a random walk on
  positive definite matrices.
\newblock {\em arXiv:2112.12558}, 2021.

\bibitem[ABC12]{auffinger2012universality}
A.~Auffinger, J.~Baik, and I.~Corwin.
\newblock Universality for directed polymers in thin rectangles.
\newblock {\em arXiv:1204.4445}, 2012.

\bibitem[BBCS18]{baik2018pfaffian}
J.~Baik, G.~Barraquand, I.~Corwin, and T.~Suidan.
\newblock Pfaffian {S}chur processes and last passage percolation in a
  half-quadrant.
\newblock {\em Ann. Probab.}, 46(6):3015--3089, 2018.

\bibitem[BR00]{BaikRains2000}
J.~Baik and E.~M. Rains.
\newblock Limiting distributions for a polynuclear growth model with external
  sources.
\newblock {\em J. Stat. Phys.}, 100:523--541, 2000.

\bibitem[BR01a]{baik2001algebraic}
J.~Baik and E.~M. Rains.
\newblock Algebraic aspects of increasing subsequences.
\newblock {\em Duke Math. J.}, 109:1--65, 2001.

\bibitem[BR01b]{baik2001asymptotics}
J.~Baik and E.~M. Rains.
\newblock The asymptotics of monotone subsequences of involutions.
\newblock {\em Duke Math. J.}, 109(2):205--281, 2001.

\bibitem[BBC20]{barraquand2018half}
G.~Barraquand, A.~Borodin, and I.~Corwin.
\newblock Half-space {M}acdonald processes.
\newblock {\em Forum Math. Pi}, 2020.

\bibitem[BBCW18]{barraquand2018stochastic}
G.~Barraquand, A.~Borodin, I.~Corwin, and M.~Wheeler.
\newblock Stochastic six-vertex model in a half-quadrant and half-line open
  asymmetric simple exclusion process.
\newblock {\em Duke Math. J.}, 167(13):2457--2529, 2018.

\bibitem[BKLD20]{barraquand2020half}
G.~Barraquand, A.~Krajenbrink, and P.~Le~Doussal.
\newblock Half-space stationary {Kardar--Parisi--Zhang} equation.
\newblock {\em J. Stat. Phys.}, 181(4):1149--1203, 2020.

\bibitem[BKLD22]{barraquand2021half}
G.~Barraquand, A.~Krajenbrink, and P.~Le~Doussal.
\newblock Half-space stationary {Kardar-Parisi-Zhang} equation beyond the
  {B}rownian case.
\newblock {\em arXiv:2202.10487}, 2022.

\bibitem[BLD21]{barraquand2021kardar}
G.~Barraquand and P.~Le~Doussal.
\newblock {Kardar-Parisi-Zhang} equation in a half space with flat initial
  condition and the unbinding of a directed polymer from an attractive wall.
\newblock {\em Phys. Rev. E}, 104(2):024502, 2021.

\bibitem[BLD22a]{barraquand2022stationary}
G.~Barraquand and P.~Le~Doussal.
\newblock A stationary model of non-intersecting polymers.
\newblock {\em in preparation}, 2022.

\bibitem[BLD22b]{barraquand2021steady}
G.~Barraquand and P.~Le~Doussal.
\newblock Steady state of the {KPZ} equation on an interval and {L}iouville
  quantum mechanics.
\newblock {\em Europhys. Lett.}, 137, 2022.

\bibitem[BR22]{barraquand2022random}
G.~Barraquand and M.~Rychnovsky.
\newblock Random walk on non-negative integers in beta distributed random
  environment.
\newblock {\em arXiv:2201.07270}, 2022.

\bibitem[BW21]{barraquand2021identity}
G.~Barraquand and S.~Wang.
\newblock An identity in distribution between full-space and half-space
  log-gamma polymers.
\newblock {\em arXiv:2108.08737}, 2021.

\bibitem[BG97]{bertini1997stochastic}
L.~Bertini and G.~Giacomin.
\newblock Stochastic {B}urgers and {KPZ} equations from particle systems.
\newblock {\em Comm. Math. Phys.}, 183(3):571--607, 1997.

\bibitem[BBNV18]{betea2018free}
D.~Betea, J.~Bouttier, P.~Nejjar, and M.~Vuleti{\'c}.
\newblock The free boundary {S}chur process and applications i.
\newblock 19(12):3663--3742, 2018.

\bibitem[BFO20]{betea2020stationary}
D.~Betea, P.~L. Ferrari, and A.~Occelli.
\newblock Stationary half-space last passage percolation.
\newblock {\em Comm. Math. Phys.}, 377(1):421--467, 2020.

\bibitem[BOZ21]{BOCZ}
E.~Bisi, N.~O'Connell, and N.~Zygouras.
\newblock The geometric burge correspondence and the partition function of
  polymer replicas.
\newblock {\em Selecta Math.}, 27(100), 2021.

\bibitem[BZ19a]{bisi2019point}
E.~Bisi and N.~Zygouras.
\newblock Point-to-line polymers and orthogonal {W}hittaker functions.
\newblock {\em Trans. Amer. Math. Soc.}, 371(12):8339--8379, 2019.

\bibitem[BZ19b]{bisi2019transition}
E.~Bisi and N.~Zygouras.
\newblock Transition between characters of classical groups, decomposition of
  {Gelfand-Tsetlin} patterns and last passage percolation.
\newblock {\em arXiv preprint arXiv:1905.09756}, 2019.

\bibitem[BC14a]{borodin2014macdonald}
A.~Borodin and I.~Corwin.
\newblock Macdonald processes.
\newblock {\em Probab. Theory and Rel. Fields}, 158(1-2):225--400, 2014.

\bibitem[BC14b]{10.1214/13-AAP944}
A.~Borodin and I.~Corwin.
\newblock {Moments and Lyapunov exponents for the parabolic Anderson model}.
\newblock {\em Ann. Appl. Probab.}, 24(3):1172 -- 1198, 2014.

\bibitem[BCFV15]{borodin2015height}
A.~Borodin, I.~Corwin, P.~Ferrari, and B.~Vet{\H{o}}.
\newblock Height fluctuations for the stationary {KPZ} equation.
\newblock {\em Math. Phys. Anal. Geom.}, 18(1):Art. 20, 95, 2015.

\bibitem[BP18]{borodin2016higher}
A.~Borodin and L.~Petrov.
\newblock Higher spin six vertex model and symmetric rational functions.
\newblock {\em Selecta Math.}, 24(2):751--874, 2018.

\bibitem[BK21]{bryc2021markov2}
W.~Bryc and A.~Kuznetsov.
\newblock Markov limits of steady states of the {KPZ} equation on an interval.
\newblock {\em arXiv:2109.04462}, 2021.

\bibitem[BKWW21]{bryc2021markov}
W.~Bryc, A.~Kuznetsov, Y.~Wang, and J.~Wesolowski.
\newblock Markov processes related to the stationary measure for the open {KPZ}
  equation.
\newblock {\em arXiv:2105.03946}, 2021.

\bibitem[BW17]{bryc2017asymmetric}
W.~Bryc and J.~Weso{\l}owski.
\newblock Asymmetric simple exclusion process with open boundaries and
  quadratic harnesses.
\newblock {\em J. Stat. Phys.}, 167(2):383--415, 2017.

\bibitem[CD08]{caravenna2008pinning}
F.~Caravenna and J.~Deuschel.
\newblock Pinning and wetting transition for (1+ 1)-dimensional fields with
  {L}aplacian interaction.
\newblock {\em Ann. Probab.}, 36(6):2388--2433, 2008.

\bibitem[CSZ17]{CSZ}
F.~Caravenna, R.~Sun, and N.~Zygouras.
\newblock Polynomial chaos and scaling limits of disordered systems.
\newblock {\em J. Eur. Math. Soc.}, 19:1--65, 2017.

\bibitem[CM94]{carmonaMolchanov}
R.~A. Carmona and S.~A. Molchanov.
\newblock {\em Parabolic Anderson problem and intermittency}, volume 108.
\newblock Mem. Amer. Math. Soc., 1994.

\bibitem[Com17]{Cometsbook}
F.~Comets.
\newblock {\em Directed polymers in random environments}.
\newblock Springer, 2017.

\bibitem[Cor22]{Corwin2022survey}
I.~Corwin.
\newblock {Some recent progress on the stationary measure for the open KPZ
  equation}.
\newblock {\em arXiv:2202.01836}, 2022.

\bibitem[CGH21]{corwin2021kpz}
I.~Corwin, P.~Ghosal, and A.~Hammond.
\newblock {KPZ} equation correlations in time.
\newblock {\em Ann. Probab.}, 49(2):832--876, 2021.

\bibitem[CK21]{corwin2021stationary}
I.~Corwin and A.~Knizel.
\newblock Stationary measure for the open {KPZ} equation.
\newblock {\em arXiv:2103.12253}, 2021.

\bibitem[COSZ14]{corwin2014tropical}
I.~Corwin, N.~O'Connell, T.~Sepp{\"a}l{\"a}inen, and N.~Zygouras.
\newblock Tropical combinatorics and {W}hittaker functions.
\newblock {\em Duke Math. J.}, 163(3):513--563, 2014.

\bibitem[CP16]{corwin2015stochastic}
I.~Corwin and L.~Petrov.
\newblock Stochastic higher spin vertex models on the line.
\newblock {\em Comm. Math. Phys.}, 343(2):651--700, 2016.

\bibitem[CS18]{corwin2016open}
I.~Corwin and H.~Shen.
\newblock Open {ASEP} in the weakly asymmetric regime.
\newblock {\em Comm. Pure Appl. Math.}, 71(10):2065--2128, 2018.

\bibitem[DNKLDT20]{denardis2020delta}
J.~De~Nardis, A.~Krajenbrink, P.~Le~Doussal, and T.~Thiery.
\newblock Delta-bose gas on a half-line and the {Kardar--Parisi--Zhang}
  equation: boundary bound states and unbinding transitions.
\newblock {\em J. Stat. Mech.: Theor. Exp.}, 2020(4):043207, 2020.

\bibitem[DEHP93]{derrida1993exact}
B.~Derrida, M.~R. Evans, V.~Hakim, and V.~Pasquier.
\newblock Exact solution of a {1D} asymmetric exclusion model using a matrix
  formulation.
\newblock {\em J. Phys. A: Math. Gen.}, 26(7):1493, 1993.

\bibitem[DMMY00]{donati2000striking}
C.~Donati-Martin, H.~Matsumoto, and M.~Yor.
\newblock On striking identities about the exponential functionals of the
  {B}rownian bridge and {B}rownian motion.
\newblock {\em Periodica Mathematica Hungarica}, 41(1):103--119, 2000.

\bibitem[FS06]{ferrarispohn2006}
P.~L. Ferrari and H.~Spohn.
\newblock Scaling limit for the space-time covariance of the stationary totally
  asymmetric simple exclusion process.
\newblock {\em Comm. Math. Phys.}, 265:1--44, 2006.

\bibitem[FNS77]{forster1977large}
D.~Forster, D.~R. Nelson, and M.~J. Stephen.
\newblock Large-distance and long-time properties of a randomly stirred fluid.
\newblock {\em Phys. Rev. A}, 16(2):732, 1977.

\bibitem[FQ15]{funaki2015kpz}
T.~Funaki and J.~Quastel.
\newblock {KPZ} equation, its renormalization and invariant measures.
\newblock {\em Stoch. Partial Diff. Eq.: Anal. Computat.}, 3(2):159--220, 2015.

\bibitem[GRASY15]{georgiou2013ratios}
N.~Georgiou, F.~Rassoul-Agha, T.~Sepp{\"a}l{\"a}inen, and A.~Yilmaz.
\newblock Ratios of partition functions for the log-gamma polymer.
\newblock {\em Ann. Probab.}, 43(5):2282--2331, 2015.

\bibitem[GP20]{GP18}
M.~Gubinelli and N.~Perkowski.
\newblock The infinitesimal generator of the stochastic burgers equation.
\newblock {\em Probab. Theory Rel. Fields}, 178:1067--1124, 2020.

\bibitem[HM18]{hairer2018strong}
M.~Hairer and J.~Mattingly.
\newblock The strong {F}eller property for singular stochastic {PDE}s.
\newblock {\em Ann. Instit. Henri Poincar{\'e}, Probab. Stat.},
  54(3):1314--1340, 2018.

\bibitem[HY04]{hariya2004limiting}
Y.~Hariya and M.~Yor.
\newblock Limiting distributions associated with moments of exponential
  {B}rownian functionals.
\newblock {\em Stud. Sci. Math. Hung.}, 41(2):193--242, 2004.

\bibitem[IMS19]{imamura2019stationary}
T.~Imamura, M.~Mucciconi, and T.~Sasamoto.
\newblock Stationary higher spin six vertex model and $ q $-whittaker measure.
\newblock {\em arXiv preprint arXiv:1901.08381}, 2019.

\bibitem[IS17]{imamura2017free}
T.~Imamura and T.~Sasamoto.
\newblock Free energy distribution of the stationary o’connell--yor directed
  random polymer model.
\newblock {\em J. Phys. A: Math. Theor.}, 50(28):285203, 2017.

\bibitem[IS19]{imamura2019fluctuations}
T.~Imamura and T.~Sasamoto.
\newblock Fluctuations for stationary q-tasep.
\newblock {\em Probab. Theor. Rel. Fields}, 174(1):647--730, 2019.

\bibitem[IT18]{Ito2018}
Y.~Ito and K.~Takeuchi.
\newblock When fast and slow interfaces grow together: connection to the
  half-space problem of the {Kardar-Parisi-Zhang} class.
\newblock {\em Phys. Rev. E}, 97(4):040103, 2018.

\bibitem[JRA20]{janjigian2020uniqueness}
C.~Janjigian and F.~Rassoul-Agha.
\newblock Uniqueness and ergodicity of stationary directed polymers on
  {$\mathbb Z^2$}.
\newblock {\em J. Stat. Phys.}, 179(3):672--689, 2020.

\bibitem[Kar85]{kardar1985scaling}
M.~Kardar.
\newblock Depinning by quenched randomness.
\newblock {\em Phys. Rev. Lett.}, 55(21):2235, 1985.

\bibitem[KPZ86]{kardar1986dynamic}
M.~Kardar, G.~Parisi, and Y.~Zhang.
\newblock Dynamic scaling of growing interfaces.
\newblock {\em Phys. Rev. Lett.}, 56:889--892, Mar 1986.

\bibitem[KQ18]{krishnan2018tracy}
A.~Krishnan and J.~Quastel.
\newblock {T}racy--{W}idom fluctuations for perturbations of the log-gamma
  polymer in intermediate disorder.
\newblock {\em Ann. Appl. Probab.}, 28(6):3736--3764, 2018.

\bibitem[Lig75]{liggett1975ergodic}
T.~M. Liggett.
\newblock Ergodic theorems for the asymmetric simple exclusion process.
\newblock {\em Trans. Amer. Math. Soc.}, 213:237--261, 1975.

\bibitem[Mac88]{macdonald1988new}
I.~G. Macdonald.
\newblock A new class of symmetric functions.
\newblock {\em S{\'e}minaire Lotharingien de Combinatoire}, 20:131--171, 1988.

\bibitem[Mac95]{macdonald1995symmetric}
I.~G. Macdonald.
\newblock {\em Symmetric functions and Hall polynomials}, volume 354.
\newblock Oxford, 1995.

\bibitem[MY01]{matsumoto2001relationship}
H.~Matsumoto and M.~Yor.
\newblock A relationship between {B}rownian motions with opposite drifts via
  certain enlargements of the brownian filtration.
\newblock {\em Osaka J. Math.}, 38(2):383--398, 2001.

\bibitem[MY05]{matsumoto2005exponential}
H.~Matsumoto and M.~Yor.
\newblock Exponential functionals of brownian motion, i: Probability laws at
  fixed time.
\newblock {\em Probab. surveys}, 2:312--347, 2005.

\bibitem[OSZ14]{o2014geometric}
N.~O'Connell, T.~Sepp{\"a}l{\"a}inen, and N.~Zygouras.
\newblock Geometric {RSK} correspondence, {W}hittaker functions and symmetrized
  random polymers.
\newblock {\em Invent. Math.}, 197(2):361--416, 2014.

\bibitem[OY01]{oconnell2001brownian}
N.~O'Connell and M.~Yor.
\newblock Brownian analogues of {B}urke's theorem.
\newblock {\em Stochastic Process. Appl.}, 96(2):285--304, 2001.

\bibitem[OY02]{o2002representation}
N.~O'Connell and M.~Yor.
\newblock A representation for non-colliding random walks.
\newblock {\em Electr. Comm. Probab.}, 7:1--12, 2002.

\bibitem[OW16]{o2016multi}
N.~O’Connell and J.~Warren.
\newblock A multi-layer extension of the stochastic heat equation.
\newblock {\em Comm. Math. Phys.}, 341(1):1--33, 2016.

\bibitem[Par19a]{parekh2017kpz}
S.~Parekh.
\newblock The {KPZ} limit of {ASEP} with boundary.
\newblock {\em Comm. Math. Phys.}, 365(2):569--649, 2019.

\bibitem[Par19b]{parekh2019positive}
S.~Parekh.
\newblock Positive random walks and an identity for half-space {SPDEs}.
\newblock {\em arXiv:1901.09449}, 2019.

\bibitem[Rai00]{rains2000correlation}
E.~M. Rains.
\newblock Correlation functions for symmetrized increasing subsequences.
\newblock {\em arXiv preprint math/0006097}, 2000.

\bibitem[SI04]{sasamoto2004fluctuations}
T.~Sasamoto and T.~Imamura.
\newblock Fluctuations of the one-dimensional polynuclear growth model in
  half-space.
\newblock {\em J. Stat. Phys.}, 115(3-4):749--803, 2004.

\bibitem[Sep12]{seppalainen2012scaling}
T.~Sepp{\"a}l{\"a}inen.
\newblock Scaling for a one-dimensional directed polymer with boundary
  conditions.
\newblock {\em Ann. Probab.}, 40(1):19--73, 2012.

\bibitem[USW04]{uchiyama2004asymmetric}
M.~Uchiyama, T.~Sasamoto, and M.~Wadati.
\newblock Asymmetric simple exclusion process with open boundaries and
  {Askey--Wilson} polynomials.
\newblock {\em J. Phys. A}, 37(18):4985, 2004.

\bibitem[Wal86]{walsh1986introduction}
J.~B. Walsh.
\newblock {\em An introduction to stochastic partial differential equations}.
\newblock Springer, 1986.

\bibitem[Wu20]{wu2018intermediate}
X.~Wu.
\newblock Intermediate disorder regime for half-space directed polymers.
\newblock {\em J. Stat. Phys.}, 181:2372–2403, 2020.

\end{thebibliography}
\bibliographystyle{goodbibtexstyle} 
\end{document}